\documentclass[letterpaper, dvipsnames, 12pt]{article}
\usepackage[margin=1in]{geometry}
\usepackage[utf8]{inputenc}
\usepackage[T1]{fontenc}
\usepackage{amsmath,amsthm,amssymb}

\usepackage{graphicx}
\usepackage{tikz}
\usepackage{hyperref}
\usepackage{color}
\usepackage{enumerate}
\usepackage{multicol}
\usepackage{xcolor}
\usepackage{amsrefs}
\newcommand{\SL}{\sum\limits_}
\newcommand{\PL}{\prod\limits_}

\newcommand{\R}{\mathbb{R}}
\newcommand{\N}{\mathbb{N}}
\newcommand{\Z}{\mathbb{Z}}
\newcommand{\PR}{\mathbb{P}}
\newcommand{\E}{\mathbb{E}}

\newcommand{\sgn}{\text{sgn}}

\usepackage[autostyle,english=american]{csquotes}
\MakeOuterQuote{"}

\newtheorem{ccounter}{ccounter}[section]
\newtheorem{thm}[ccounter]{Theorem}
\newtheorem{lem}[ccounter]{Lemma}
\newtheorem{cor}[ccounter]{Corollary}
\newtheorem{defn}[ccounter]{Definition}
\newtheorem{prop}[ccounter]{Proposition}

\newtheorem{rem}[ccounter]{Remark}

\title{Distribution of the magnetization of the critical Ising model on sparse random graphs}
\author{Kyprianos-Iason Prodromidis\thanks{Department of Mathematics, Princeton University, Email: kp2702@princeton.edu} \and Allan Sly\thanks{Department of Mathematics, Princeton University, Email: allansly@princeton.edu}}
\date{}
\begin{document}
\maketitle
\begin{abstract}
In this paper, we consider the Ising model on random $d$-regular graphs (with $d\ge3$) and Erd\"os-R\'enyi graphs $G(n,d/n)$ (with $d>1$) at the critical temperature. We prove that the \textit{magnetization}, i.e.\ the sum of the spins of a configuration, is typically of order $n^{3/4}$ and when multiplied by $n^{-3/4}$ converges in distribution to a non-trivial random variable, whose density we describe. In the regular graph case, the Small Subgraph Conditioning Method applies, and the limiting density is of the form $\frac1{Z}\exp(-C_d z^4)$.

Surprisingly, in the Erd\"os-R\'enyi case, while the ratio of the second moment and first moment squared is bounded, the short cycle count is not enough to explain the fluctuations of the partition function restricted to a particular magnetization. We identify the additional source of randomness as path counts of slowly diverging length. This quantity is motivated by the heuristic that correlations between distant vertices are proportional to their local branching rate. Augmenting the Small Subgraph Conditioning Method with these path counts allows us to prove convergence of the magnetization to a non-deterministic limiting distribution. To our knowledge, the need to condition on graph observables beyond the cycle counts is a new phenomenon for spin systems.

As further corollaries, we derive a polynomial lower bound on the mixing time of the stochastic Ising model on sparse random graphs at the critical temperature complementing recent upper bounds~\cite{Polchinski-eq,P-S}.  Moreover, we establish the fluctuations of the free energy in the Erd\"os-R\'enyi case, answering a recent question of Coja-Oghlan et. al.~\cite{Coja-et-al}.
\end{abstract}

\section{Introduction}

Just as for the lattice $\Z^d$, on sparse random graphs the Ising model exhibits a phase transition with exponential decay of correlation at high temperatures and long-range order emerging at low temperatures. Because these graphs are locally treelike, many properties of the Ising model can be understood in terms of the Ising model on its local weak limit graph.
In the low temperature regime, the local weak limit of the Ising model and graph together was shown to be a balanced mixture of the extremal plus and minus measures on the local weak limit tree (either the infinite $d$-regular tree for random $d$-regular graphs or the Galton-Watson branching process with Poisson offspring distribution on $G(n,d/n)$) and the free energy is explicitly known \cite{BasakDem,D-M,GerMon,MMS}.

We are interested in the law of the magnetization.  The simplest graph to study the Ising model is the \textit{mean-field} setting, where the underlying graph is the complete graph. In this case, if $n$ is the number of vertices and the inverse temperature is $\beta/n$, the model undergoes a similar phase transition at $\beta=1$ (see for example \cite{Ellis2006}). When $\beta<1$,  the magnetization $m_n=\SL{i=1}^n\sigma_i$ obeys a central limit theorem with
\[
m_n\cdot n^{-1/2}\xrightarrow[n\to\infty]{(d)}N(0,(1-\beta)^{-1}).
\]
On the other hand, when $\beta>1$, the distribution is bimodal: $\frac1n m_n$ concentrates near $\pm x$, where $x$ solves the equation $x=\tanh(\beta x)$. Finally, in the critical case $\beta=1$, $m_n$ is of order $n^{3/4}$ and $m_n\cdot n^{-3/4}$ converges in distribution to a law with density $\varpropto\exp(-y^4/12)$ (see \cite{Simon-Griffiths}). In the critical window when $\beta = 1+\theta n^{-1/2}$, for some $\theta\in\R$, the limiting density becomes $\varpropto\exp(\theta y^2/2-y^4/12)$.

The case of the distribution of the magnetization for the critical Ising model in $\Z^d$ remains unresolved. In dimensions 5 and higher it is natural to expect behaviour similar to the mean field case. There is, however, a subtlety in the effect of the boundary conditions. In a box with free boundary conditions the magnetization is in fact Gaussian. This is because the missing edges on the boundary slightly lower the average degree and effectively push the system into the high temperature regime.  This is not an issue with periodic boundary conditions in which case~\cite{LPS} showed that there is a non-normal magnetization and conjectured that it has asymptotic density $\exp(-cy^4)$ as in the complete graph. The case of $d=2$ is quite different, as the limiting distribution was established in~\cite{CGN} and has tails $\exp(-(c+o(1))x^{16})$.

In this paper, we study the same problem at criticality on sparse random graphs, more specifically on random $d$-regular graphs (for $d\ge3$) and Erd\"os-R\'enyi graphs $G(n,d/n)$ (for $d>1$).  In both cases, at the critical temperature we find that the limiting law of the magnetization is non-Gaussian.

To describe convergence to a random measure, we will consider convergence in the Wasserstein distance with respect to a Kolmogorov–Smirnov metric. Specifically, on the space $\mathcal{M}_1(\R)$ of probability measures on $\R$, consider the metric
$$\text{d}_{\text{KS}}(\mu,\nu):=\sup\limits_{y\in\R}|\mu((-\infty,y])-\nu((-\infty,y])|.$$
We are interested in the 1-Wasserstein distance associated to the $\text{d}_{\text{KS}}$ metric.

In the random regular case the limit is, in fact, deterministic, and the same as for the critical Curie-Weiss model, up to a $d$-dependent constant in the exponent.

\begin{thm}\label{main-thm-1}
Consider the Ising model on a graph $G\sim G_{n,d}$ (for $d\ge3$) at the critical temperature, $\beta_c=\tanh^{-1}((d-1)^{-1})$. Let $\mu_{n,\text{reg}}\in\mathcal{M}_1(\R)$ be the random measure defined as
$$\mu_{n,\text{reg}}(A):=\PR_{n,\text{reg}}(m\cdot n^{-3/4}\in A\ |\ G),\ \text{with}\ G\sim G_{n,d}.$$
Then, there exists a measure $\mu_{\text{reg}}\in\mathcal{M}_1(\R)$ such that $\mu_{n,\text{reg}}\xrightarrow[n\to\infty]{}\mu_{\text{reg}}$ in the 1-Wasserstein distance associated to $\text{d}_{\text{KS}}$. This measure $\mu_{\text{reg}}$ has density $$f_{\text{reg}}(y)\varpropto \exp\left(-\frac{(d-2)(d-1)}{12d^2}y^4\right).$$
\end{thm}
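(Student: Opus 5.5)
The plan is to work with the partition functions restricted to a fixed magnetization. For $m\in\{-n,-n+2,\dots,n\}$ set $Z_m(G)=\sum_{\sigma:\sum_i\sigma_i=m}\exp(\beta_c\sum_{(u,v)\in E}\sigma_u\sigma_v)$, so that $\mu_{n,\text{reg}}$ gives the set $\{m n^{-3/4}\}$ mass $Z_m(G)/\sum_{m'}Z_{m'}(G)$. Everything then reduces to showing that, for $m=yn^{3/4}$, the ratio $Z_m(G)/\E Z_0$ is, with high probability and uniformly in $y$ on compacts, equal to a common random factor $W_n(G)$ times $\exp(-C_d y^4)$, with $C_d=\frac{(d-2)(d-1)}{12d^2}$. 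The factor $W_n$ does not depend on $m$, so it cancels after normalization. Separately, the contribution of $|m|\ge Kn^{3/4}$ has to be shown negligible.

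\textbf{First moment.} In the configuration model, $\E Z_m$ is a sum over the fraction $\alpha$ of $+$ half-edges that are matched to $-$ half-edges. Laplace's method reduces it to $\exp(n\phi(m/n))$ times polynomial factors, where $\phi(x)=\max_\alpha \Phi(x,\alpha)$. At $\beta_c=\tanh^{-1}((d-1)^{-1})$ the quadratic coefficient of $\phi$ at $0$ vanishes; this is just the Bethe/belief-propagation criticality condition. Expanding to fourth order gives $\phi(x)=\phi(0)-C_dx^4+O(x^6)$, and one checks that $C_d=\frac{(d-2)(d-1)}{12d^2}$ (as a sanity check, this degenerates appropriately as $d\to\infty$ after rescaling). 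Plugging in $x=yn^{-1/4}$ gives $\E Z_m\approx \E Z_0\, e^{-C_dy^4}$, with the lattice spacing $2$ handled by a Riemann sum. For the tails I would use $\phi(x)\le\phi(0)-cx^4$ for small $|x|$ and $\phi(x)<\phi(0)-c$ for $|x|$ bounded away from $0$. Then $\sum_{|m|>Kn^{3/4}}\E Z_m/\sum_m \E Z_m\to0$ as $K\to\infty$, and by Markov's inequality together with the concentration step below, the quenched tail mass is also small.

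\textbf{Concentration via Small Subgraph Conditioning.} Let $X_k$ be the number of $k$-cycles; these are asymptotically independent Poisson with means $\lambda_k=(d-1)^k/(2k)$. I will show that $\E[Z_mX_{k}]/\E Z_m\to\lambda_k(1+\delta_k)$ with $\delta_k=(\tanh\beta_c)^k=(d-1)^{-k}$, and similarly for joint factorial moments, uniformly in $|m|\le Kn^{3/4}$. The key point is that the planted measure at magnetization $o(n)$ is locally the free-boundary tree measure, and a cycle of length $k$ carries weight $1+\tanh^k\beta$. The main obstacle is the second moment: I need $\E Z_m^2/(\E Z_m)^2\to\exp(\sum_k\lambda_k\delta_k^2)$. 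At criticality the overlap function is degenerate to fourth order, so Laplace's method does not give a Gaussian. For $Z_m^2$ one has two configurations with fixed magnetizations $m$, and the overlap $q$ is free. The exponent in $q$ is quadratic and nondegenerate, because the "pair" model at $\beta_c$ is still in its high-temperature regime ($\tanh^2\beta_c(d-1)<1$). The Hessian determinant then produces exactly $\exp(\sum\lambda_k\delta_k^2)$ through the standard identity $\prod_k(1-(d-1)\tanh^2)^{-1/2}$-type expansions. I would carry out this computation explicitly with a multivariate Laplace method over the pair-type edge statistics, holding $m$ fixed so that the quartic direction is frozen. With this in hand, Janson's theorem gives $Z_m/\E Z_m \to W=\prod_k(1+\delta_k)^{X_k}e^{-\lambda_k\delta_k}$ in probability, uniformly in $m$ on the window. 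The limit $W$ is the same for every $m$ because $\delta_k$ is independent of $m$ there.

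\textbf{Conclusion.} Combining the two steps, with high probability $Z_m(G)=W_n\E Z_0 e^{-C_dy^4}(1+o(1))$ uniformly over $|y|\le K$. Normalizing and summing over the lattice gives $\sup_y|\mu_{n,\text{reg}}((-\infty,y])-\mu_{\text{reg}}((-\infty,y])|\to0$ in probability. Since $d_{\text{KS}}\le1$ is bounded, convergence in probability upgrades to convergence in the $1$-Wasserstein distance, which proves the statement. Uniformity in $m$ needs a little care, since Janson's theorem is stated for a single sequence; I would handle it by applying it on a finite grid of $y$ and using monotonicity and smoothness of $e^{-C_dy^4}$ to interpolate.
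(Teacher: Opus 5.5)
Your architecture is the paper's. You use restricted partition functions $Z_m$, a Laplace computation giving $\E Z_m\propto\exp(-\frac{(d-1)(d-2)}{12d^2}m^4/n^3)$, and a second moment at fixed $m$ in which every remaining direction (edge statistics and the overlap) is nondegenerate, so that $\E Z_m^2/(\E Z_m)^2\to\sqrt{(d-1)/(d-2)}=\exp(\sum_k\lambda_k\delta_k^2)$. You then add planted cycle counts with $\delta_k=(d-1)^{-k}$, small subgraph conditioning, and Markov's inequality for $|m|>Cn^{3/4}$.

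The genuine gap is the passage from one magnetization to the $\Theta(n^{3/4})$ magnetizations that make up $\mu_{n,\text{reg}}((-\infty,y])$. Small subgraph conditioning along a sequence $m=m(n)$ only shows that $Z_m/\E Z_m-e^{W_{k,n}}$ is small in $L^2$, hence in probability. Its error has no rate, so you cannot take a union bound over $n^{3/4}$ values of $m$. Your conclusion that with high probability $Z_m(G)=W_n\E Z_0\,e^{-C_dy^4}(1+o(1))$ holds simultaneously for all $|y|\le K$ is therefore a local limit statement that nothing in the proposal proves, and it is more than the theorem needs.

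Your suggested repair does not close this. Applying Janson's theorem at finitely many grid values $m_j\approx y_jn^{3/4}$ controls only the $Z_{m_j}(G)$. Smoothness and monotonicity of $e^{-C_dy^4}$ control $\E Z_m$ between grid points, not the random $Z_m(G)$. The map $m\mapsto Z_m(G)$ has no monotonicity, and a priori neighbouring values are comparable only up to bounded factors. The same issue affects your tail step, which needs the bulk sum $\sum_{|m|\le Cn^{3/4}}Z_m$ to be at least a constant times its mean with high probability.

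What is needed, and what the paper does, is to aggregate in $L^2$ before passing to probabilities.

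\textbf{Step 1 (uniform $L^2$ bound).} Show that $\limsup_n\sup_{|m|\le Cn^{3/4}}\E[(Z_m/\E Z_m-e^{W^{(R)}_{k,n}})^2]\le\alpha_{k,R}$ with $\lim_{R}\lim_{k}\alpha_{k,R}=0$. The left side equals $\E Z_m^2/(\E Z_m)^2-2\E^*_m e^{W^{(R)}_{k,n}}+\E e^{2W^{(R)}_{k,n}}$. Each term converges uniformly in $m$ by your moment and planted-cycle computations (or by a subsequence argument), and $W_{k,n}$ does not depend on $m$.

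\textbf{Step 2 (partial sums).} For $Z^{(i)}=\sum_{-Cn^{3/4}\le m\le y_in^{3/4}}Z_m$, you get $\E[(Z^{(i)}-e^{W^{(R)}_{k,n}}\E Z^{(i)})^2]\le N_i^2\sup_m\E[(Z_m-e^{W^{(R)}_{k,n}}\E Z_m)^2]\lesssim\alpha_{k,R}(\E Z^{(i)})^2$. This uses that $\E Z_m$ varies by at most a bounded factor over the window.

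\textbf{Step 3 (grid and interpolation).} Apply Chebyshev at finitely many $y_i$. Add a high-probability lower bound on $e^{W_{k,n}}$ (from $\PR(e^{W_k}\le\delta)\le\delta\sqrt{(d-1)/(d-2)}$) so you can divide by $Z^{(s)}$. This gives $Z^{(i)}/Z^{(s)}\approx\E Z^{(i)}/\E Z^{(s)}$. Only now use monotonicity, namely of $y\mapsto\mu_{n,\text{reg}}([-C,y])$, to interpolate between grid points.

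A smaller caution: freezing $m$ does not by itself make the second-moment ratio independent of $m$. The overlap couples linearly to $(m/n)^2$ at the relevant scale. You must check that after optimizing over the overlap, the $m^4/n^3$ coefficient of $\log\E Z_m^2$ is exactly $-\frac{(d-1)(d-2)}{6d^2}$, twice the first-moment coefficient. This holds here. It fails for $G(n,d/n)$, where a factor $\exp(\frac{m^4}{2(d-1)n^3})$ survives, and that is why the paper needs path counts there.
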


The case of the Erd\'os-R\"enyi random graph is more complicated; the limiting distribution is not deterministic as in the random regular case. The limit takes the form of a mixture of the distributions found in the critical window for the Curie-Weiss model.

\begin{thm}\label{main-thm-2}
Consider the Ising model on a graph $G\sim G(n,d/n)$ (for $d>1$) at the critical temperature, $\beta_c=\tanh^{-1}(d^{-1})$. Let $\mu_n\in\mathcal{M}_1(\R)$ be the random measure defined as 
$$\mu_n(A):=\PR(m\cdot n^{-3/4}\in A\ |\ G), \ \text{with}\ G\sim G(n,d/n).$$
Then, there exists a random measure $\mu\in\mathcal{M}_1(\R)$ such that $\mu_n\xrightarrow[n\to\infty]{}\mu$ in the Wasserstein distance associated to $\text{d}_{\text{KS}}$. Moreover, the measure $\mu$ is given by $\mu=\mu^{(X)}$, where for $x\in\R$,
$$\mu^{(x)}(A)=\dfrac{\int_A\exp\left(\frac{x}{\sqrt{2(d-1)}}\cdot u^2-\left(\frac{1}{4(d-1)}+\frac{1}{12}\right)\cdot u^4\right) \ \text{d}u}{\int_\R\exp\left(\frac{x}{\sqrt{2(d-1)}}\cdot u^2-\left(\frac{1}{4(d-1)}+\frac{1}{12}\right)\cdot u^4\right) \ \text{d}u},$$
and $X\sim N(0,1)$.
\end{thm}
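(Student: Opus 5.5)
We need to sketch a plan for Theorem \ref{main-thm-2}. We work with the restricted partition functions $Z_m(G)=\sum_{\sigma:\sum\sigma_i=m}\exp(\beta_c\sum_{ij\in E}\sigma_i\sigma_j)$, so that $\mu_n$ is determined by the profile $m\mapsto Z_m(G)/Z(G)$. For $m=un^{3/4}$ the plan has five steps.

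\textbf{Step 1 (first moment).} Compute $\E Z_m$ using the planted/annealed model. Since $G(n,d/n)$ has independent edges, $\E Z_m=\sum_{\sigma}\prod_{i<j}(1-\frac dn+\frac dn e^{\beta_c\sigma_i\sigma_j})$. This depends only on $m$, and a Laplace expansion gives
\[\E Z_m\approx C_n\exp\!\left(-c_1u^4\right)\]
with a quartic coefficient.

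\textbf{Step 2 (second moment).} Compute $\E Z_mZ_{m'}$ via overlap counting. This should show that $\E Z_m^2/(\E Z_m)^2$ is bounded, and that it exceeds the product of the standard cycle factors $\prod_k(1+\delta_k)^{\lambda_k}$ by an extra factor. That extra factor depends on $u^2$ through a Gaussian-type term.

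\textbf{Step 3 (the extra observable).} Identify the additional observable. Following the abstract, use path counts of slowly growing length $\ell=\ell_n$: let $P_\ell$ be the number of paths of length $\ell$, normalised as
\[Y_n=\frac{P_\ell-\E P_\ell}{\sqrt{\operatorname{Var}P_\ell}}\]
(equivalently, the fluctuation of $\sum_v$ (local branching)$^2$). Prove jointly that the short cycle counts $C_k$ converge to independent Poissons, that $Y_n\to X\sim N(0,1)$, and that these are asymptotically independent. Then compute the planted tilt: under the measure $G$ weighted by $Z_m$, $Y_n$ is shifted by $\kappa u^2$, where $\kappa$ is obtained by taking the $\ell$-fold $d\tanh\beta_c=1$ branching into account. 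This heuristic is that correlations are proportional to the branching rate.

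\textbf{Step 4 (augmented small subgraph conditioning).} Show that
\[\frac{Z_m}{\E Z_m}\approx\prod_k(1+\delta_k)^{C_k}e^{-\lambda_k\delta_k}\cdot\exp\!\left(\kappa u^2Y_n-\tfrac{\kappa^2u^4}{2}\right)\]
in $L^2$, uniformly over $|u|\le K$. This is done by checking that the variance explained by conditioning equals the full second-moment ratio of Step 2. The cycle factors are independent of $u$ to leading order and cancel in the ratio $Z_m/Z$. What remains is the density
\[\propto\exp\!\left(\kappa u^2Y_n-(c_1+\kappa^2/2)u^4\right).\]
Matching the constants gives $\kappa=1/\sqrt{2(d-1)}$, with $c_1+\kappa^2/2=\tfrac1{12}+\tfrac1{4(d-1)}$.

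\textbf{Step 5 (tightness and conclusion).} For tightness, show that $|m|\ge Kn^{3/4}$ carries vanishing mass with high probability. Do this by a first-moment bound on $\sum_{|u|>K}Z_m$ against a lower bound for $Z$ from Step 4, together with a Riemann-sum approximation for the local profile. Summing $m$ in steps of $2$ then turns the pointwise asymptotics into $\mathrm d_{\mathrm{KS}}$-convergence of $\mu_n$ to $\mu^{(Y_n)}$. Since $x\mapsto\mu^{(x)}$ is continuous in $\mathrm d_{\mathrm{KS}}$ and $Y_n\to X$ in law, Wasserstein convergence follows via coupling.

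\textbf{Main obstacle.} The hardest step is Step 4: the joint second-moment computation that includes the path-count observable. Two things must be verified there. First, $\ell_n$ must grow slowly enough that moments of $P_\ell$ are computable. Second, the variance of the exponential tilt must exactly account for the residual ratio in Step 2, uniformly in $u$. I would first try computing the covariance of $Z_m$ with $P_\ell$ in the planted model and checking that it saturates the variance.
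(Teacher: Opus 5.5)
There is a genuine gap at the start. You work with the un-normalized $Z_m$, and for it Steps 1--3 are false. If you compute $\E Z_m$ exactly as you propose, a pair with $\sigma_i\sigma_j=\pm1$ contributes the factor $1+\frac dn(e^{\pm\beta_c}-1)$. The $m$-dependent part of $\log\E Z_\sigma$ is then $\frac{d\sinh\beta_c}{2}\cdot\frac{m^2}{n}$, while the binomial entropy contributes $-\frac{m^2}{2n}$. Since $\tanh\beta_c=1/d$, we have $d\sinh\beta_c=\cosh\beta_c>1$, so these terms do not cancel. The annealed model is a Curie--Weiss model in its ordered phase: the factor $\cosh(\beta_c)^{|E|}$ implicit in $Z$ tilts $G(n,d/n)$ toward average degree $d\cosh\beta_c$, at which $\beta_c$ is supercritical. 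This has three consequences:
\begin{enumerate}
\item $\log\E Z_m-\log\E Z_0\approx\frac{\cosh\beta_c-1}{2}u^2\sqrt n$, rather than $-c_1u^4$.
\item $\E Z_m^2/(\E Z_m)^2$ grows exponentially in $n$. Essentially it behaves like $\E[\cosh(\beta_c)^{2|E|}]/\E[\cosh(\beta_c)^{|E|}]^2\approx e^{\frac{dn}{2}(\cosh\beta_c-1)^2}$, reflecting the $\sqrt n$-scale fluctuations of $|E|$.
\item Under the planted measure, the edge and path counts shift by order $\sqrt n$ standard deviations, not by $\kappa u^2$.
\end{enumerate}
No conditioning on cycle and path counts can repair this.

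The missing idea is to remove the edge-count fluctuation before any moment computation. You can either condition on $|E|$ or, as the paper does, pass to $\tilde Z_m=2^{-n}\cosh(\beta_c)^{-|E|}Z_m$. This leaves $\mu_n$ unchanged, since the factor does not depend on $m$. Now each pair has mean factor $1\pm\frac1n$, and the $m^2/n$ terms cancel exactly against the entropy. This gives $\E\tilde Z_m\sim\sqrt{2/(\pi n)}\,e^{-u^4/12-3/4}$ and $\E\tilde Z_m^2/(\E\tilde Z_m)^2\to\exp\bigl(\frac{u^4}{2(d-1)}+\sum_{i\ge3}\frac{d^{-i}}{2i}\bigr)$.

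With that change, your Steps 3--5 are essentially the paper's argument: the tilt $\exp(\theta_m Y-\theta_m^2/2)$ with $\theta_m=u^2/\sqrt{2(d-1)}$, the cycle weights, tightness via Markov plus a lower bound on $\tilde Z$, and a Skorokhod coupling. Two smaller remarks:
\begin{enumerate}
\item The cycle part of the second-moment ratio is $\exp(\sum_k\lambda_k\delta_k^2)$, not $\prod_k(1+\delta_k)^{\lambda_k}$, which diverges.
\item The paper avoids the uniformity issues of a growing $\ell_n$ by keeping $\ell$ fixed. It proves $\widehat X_{\ell,n}\to N(0,1+\gamma^{(2)}_\ell)$ jointly with the cycle counts, with planted mean $\frac{x^2}{\sqrt{2(d-1)}}(1+\gamma^{(1)}_\ell)$, and lets $\ell\to\infty$ only after $n\to\infty$ in the $L^2$ error bound.
\end{enumerate}
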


The natural approach for studying the magnetization of spin systems on random graphs is to apply the second moment method to the partition function and its restriction to a particular magnetization. One commonly finds that
\[
\dfrac{\E(Z_n^2)}{\E(Z_n)^2}\xrightarrow[n\to\infty]{}c\in(1,\infty)
\]
implying that the normalized partition function retains some non-trivial asymptotic variance.  This variance is typically explained by the effect of small cycles in the graph as was first demonstrated by Robinson and Wormald in the study of Hamiltonicity of random regular graphs~\cite{RobinsonWormald1994} where they introduced the Small Subgraph Conditioning Method.
This method has been applied to a range of spin systems such as the ferromagnetic (e.g. \cite{Coja-et-al}) and anti-ferromagnetic (e.g. \cite{Fabian-Loick}) Ising model, the hardcore model (e.g. \cite{MosselWeitzWormald2009}) and colorings (e.g. \cite{DiazEtAl2009,KemkesPerezWormald2010}).
The heuristic explanation for the small graph conditioning method is that adding an edge of a graph that closes a cycle has a different effect on the partition function than adding an edge between distant vertices; in the uniqueness regime, nearby vertices are correlated while distant ones are almost uncorrelated. For each $i$, the number of cycles of length $i$ in the graph, $Y_{i,n}$, is asymptotically Poisson with mean $\lambda_i$ with a jointly independent limit.  For appropriately chosen $\alpha_i, \kappa_i$ and a slowly growing $m_n$, one can apply the second moment method to
\[
Z_n \prod_{i=3}^{m_n}\exp(-\alpha_i C_i+\kappa_i)
\]
and show that it is asymptotically constant. Consequently, for $C_i$ independent Poisson$(\lambda_i)$,
\[
\dfrac{Z_n}{\E(Z_n)}\xrightarrow[n\to\infty]{(d)}W=\exp\left(\SL{i=1}^\infty(\alpha_iC_i-\kappa_i)\right)
\]
The same method can be applied to the partition function, restricted to a given magnetization, which in turn yields the asymptotic law of the magnetization.

This method has been previously applied for the high-temperature and the anti-ferromagnetic regime up to the reconstruction threshold. It is, therefore, reasonable to assume that it should also apply at the critical temperature where decay of correlation still holds and indeed it does for the regular graph case.  In the case of the Erd\"os-R\'enyi random graph, it is necessary to account for the fluctuations in the total number of edges. This can be done in two ways: one is to fix the number of edges (which is the route taken in \cite{Coja-et-al}) and the second is to reweight the partition function by a factor of $(\cosh(\beta))^{-|E|}$, as each extra edge increases the partition function by approximately a $\cosh(\beta)$ factor.  After either of these adjustments, in the high temperature regime, the Small Subgraph Conditioning Method works as normal. However, at the critical temperature, even after adjusting for the edge count and the effect of small cycles, a non-trivial amount of variance remains asymptotically. In fact, when $d\in(1,4)$,
\[
\dfrac{\E[(Z_n(\cosh(\beta))^{-|E_n|})^2]}{\E[(Z_n(\cosh(\beta))^{-|E_n|}]^2}\to\infty.
\]
However, if we only consider configurations whose magnetizations are $m$, i.e. we set
$$Z_{n,m}=\SL{\sigma:\ \SL{v}\sigma_v=m}\exp\left(\beta\SL{u\sim v}\sigma_u\sigma_v\right)$$
and assume that $m\cdot n^{-3/4}\to x$ then
\[
\dfrac{\E[(Z_{n,m}(\cosh(\beta))^{-|E_n|})^2]}{\E[(Z_{n,m}(\cosh(\beta))^{-|E_n|}]^2}\to c(x,d)\in(1,\infty).
\]
To understand the source of this surprising extra variance, consider the process of building the graph one edge at a time with $G_i$ the graph after $i$ edges.  When adding the $i+1$ edge $(u,v)$, the change in the partition function is given by
\[
\frac{Z_{G_{i+1}}}{Z_{G_i}}=\cosh(\beta)\left(1+\tanh(\beta)\cdot\E_{G_i}(\sigma_u\sigma_v)\right)
\]
Note that if the edge closes a small cycle, it means that $u$ and $v$ were close in $G_i$ and therefore $\E_{G_i}(\sigma_u\sigma_v)\approx (\tanh(\beta))^{d(u,v)}$. This is the cycle effect in the Small Subgraph Conditioning Method. Two random vertices will typically have $\E_{G_i}(\sigma_u\sigma_v)\asymp n^{-\frac12}$ at the critical temperature (at least for the final edges of the construction). Moreover, how correlated $\sigma_u$ and $\sigma_v$ are will depend on how well $u$ and $v$ are connected to the giant component of the graph.  Heuristically, from a consideration of the critical FK-component, we expect  
\begin{equation}\label{eq:covariance.heuristic}
\E_{G_i}(\sigma_u\sigma_v)\asymp n^{-\frac12}d^{-2\ell}S_\ell(u) S_\ell(v),
\end{equation}
where $S_\ell(u)$ is the number of vertices at distance $\ell$ from $u$.  This suggests that the variance of $\E_{G_i}(\sigma_u\sigma_v)$ is of order $C/n$ and so summed over order $n$ edges these weak correlations should induce order one multiplicative fluctuations in the partition function which explains the failure of the vanilla implementation of the Small Subgraph Conditioning Method.

While we won't need to prove~\eqref{eq:covariance.heuristic}, it suggests the right adjustment.  If the covariance is proportional to $\E_{G_i}(\sigma_u\sigma_v)$ then it is also approximately proportional to the number of additional paths of length $2\ell+1$ created when adding the edge $(u,v)$. This approximation is more accurate for larger values of $\ell$.  We let $X_{\ell,n}$ denote the number of paths of length $\ell$ present in $G$ and 
\begin{equation}\label{hat-X-def}
\widehat{X}_{\ell,n}:=\dfrac{X_{\ell,n}-\frac{1}{2}nd^\ell}{\sqrt{\frac12nd^{2\ell}\frac{\ell^2}{d-1}}}.
\end{equation}
While we don't prove it, as $n\to \infty$, the $(\widehat{X}_{\ell,n})_{\ell\geq 3}$ converges jointly in distribution to a jointly Gaussian ensemble of normal random variables $(X^*_\ell)_{\ell\geq 3}$.  Moreover, that $X^*_\ell$ converge in probability as $\ell\to\infty$ to some standard normal random variable $X^*_\infty$.  This quantity gives a measure of the amount of long paths in the graph and explains the additional source of the variance.  As such, our approach is to apply the Small Subgraph Conditioning Method with $X_{\ell_n,n}$ in addition to the cycle counts for some slowly diverging $\ell_n$.  As far as we know, this is the first example of a spin system where augmenting the cycles count with an additional observable of the graph is necessary to apply the Small Subgraph Conditioning Method (see \cite{RobinsonWormald2001} in the case of Hamiltonian cycles with randomly selected oriented edges requires extra observables from the edge decorations).

We note that Theorems \ref{main-thm-1} and \ref{main-thm-2} also imply lower bounds for the mixing times of the critical stochastic Ising model for both regular and Erd\"os-R\'enyi graphs. This is the first polynomial lower bound for this chain, and complements upper bounds proven recently \cite{Polchinski-eq, P-S}.  The proof uses the magnetization as a test function giving a lower bound on the spectral gap, using the fact that lower-bounding the variance of the magnetization is enough to show a lower bound for the relaxation time (see e.g.~\cite{Holley,Lubetzky-Sly}). The details are given in subsection \ref{3.3}.
\begin{cor}\label{Glaub-lower}
For the continuous time Glauber dynamics where each vertex is updated at rate 1:
\begin{enumerate}
\item 
On the random $d$-regular graph graph with $\beta=\beta_c$, there exists some constant $c_0>0$ such that with probability $1-o(1)$, the mixing time of the Glauber dynamics satisfies $t_{mix}\ge c_0\sqrt{n}$.
\item 
On the random Erd\"os-R\'enyi random graph $G(n,d/n)$, at $\beta=\beta_c$, for every $\varepsilon>0$ there exists $\delta>0$ such that $\mathbb{P}[t_{mix}\geq \delta\sqrt{n}]\geq 1-\varepsilon$.
\end{enumerate}
\end{cor}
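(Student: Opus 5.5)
The plan is to bound the relaxation time from below with the magnetization as a test function, and then pass from relaxation time to mixing time. For reversible continuous-time Glauber dynamics with stationary measure $\pi$ (the Ising measure on $G$), the variational characterization of the spectral gap gives
$$t_{rel}=\frac1{\text{gap}}\ge \frac{\mathrm{Var}_\pi(m)}{\mathcal{E}(m,m)},$$
where $\mathcal{E}(f,f)=\frac12\sum_{\sigma,\sigma'}\pi(\sigma)K(\sigma,\sigma')(f(\sigma)-f(\sigma'))^2$. A single-site update changes $m$ by at most $2$, and each vertex updates at rate $1$. So the total jump rate out of any state is at most $n$, and therefore $\mathcal{E}(m,m)\le \frac12\cdot n\cdot 4=2n$. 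Combining this with the standard inequality $t_{mix}\ge (t_{rel}-1)\log 2$ (see \cite{Lubetzky-Sly}), it suffices to show that $\mathrm{Var}_\pi(m)\ge c\, n^{3/2}$ with the required probability.

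The variance lower bound comes from the main theorems. By spin-flip symmetry, $\E_\pi(m)=0$, so $\mathrm{Var}_\pi(m)=n^{3/2}\,\E_{\mu_n}[Y^2]$, where $Y$ has law $\mu_n$. Convergence in Wasserstein distance with respect to $\text{d}_{\text{KS}}$ implies $\text{d}_{\text{KS}}(\mu_n,\mu)\to0$ in probability (in the regular case $\mu=\mu_{\text{reg}}$ is deterministic). The key step is then elementary. For any $\nu$,
$$\E_\nu[Y^2]\ge \nu(|Y|>1)\ge \mu(|Y|>1)-2\,\text{d}_{\text{KS}}(\nu,\mu).$$

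In case 1, $\mu_{\text{reg}}$ has a density that is positive everywhere, so $p:=\mu_{\text{reg}}(|Y|>1)>0$. Hence with probability $1-o(1)$ we have $\mathrm{Var}_\pi(m)\ge \frac p2 n^{3/2}$, which gives $t_{rel}\ge \frac p4\sqrt n$ and hence $t_{mix}\ge c_0\sqrt n$.

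In case 2, $\mu=\mu^{(X)}$ with $X\sim N(0,1)$. For each fixed $x$, $\mu^{(x)}$ has an everywhere-positive density, and $x\mapsto \mu^{(x)}(|Y|>1)$ is continuous and positive. Given $\varepsilon$, choose $K$ with $\PR(|X|>K)<\varepsilon/2$, and set $p_K=\min_{|x|\le K}\mu^{(x)}(|Y|>1)>0$. Coupling so that $\text{d}_{\text{KS}}(\mu_n,\mu)\to0$ in probability (Skorokhod representation, or directly from convergence in distribution of the random measures), we get $\PR(\E_{\mu_n}Y^2\ge p_K/2)\ge 1-\varepsilon$ for large $n$. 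This yields $\delta=p_K\log 2/8$, after adjusting constants.

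The main point needing care is case 2. There the limit is random, so the bound cannot be made uniform with high probability, only with probability $1-\varepsilon$. We also have to use that Wasserstein convergence of random measures transfers the probabilistic statement about $\mu(|Y|>1)$ to $\mu_n$. This works because the functional $\nu\mapsto\nu(|Y|>1)$ is $2$-Lipschitz in $\text{d}_{\text{KS}}$, and the $1$-Wasserstein distance dominates convergence in probability of $\text{d}_{\text{KS}}$ under a coupling.
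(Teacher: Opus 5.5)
Your proposal is correct and follows the paper's proof essentially step for step: it uses the magnetization as test function with $\mathcal{E}(m,m)\le 2n$, bounds $\mathrm{Var}_\pi(m)$ below by $n^{3/2}$ times the $\mu_n$-mass outside $[-1,1]$, transfers this mass from the limit using its $2$-Lipschitz dependence on $\mathrm{d}_{\mathrm{KS}}$, and in the Erd\"os-R\'enyi case truncates to $|X|\le K$ and applies a near-optimal coupling with Markov's inequality. The appeal to Skorokhod representation is unnecessary, and delicate since $(\mathcal{M}_1(\R),\mathrm{d}_{\mathrm{KS}})$ is not separable, but your fallback of an $n$-dependent coupling with small $\E\,\mathrm{d}_{\mathrm{KS}}(\mu_n,\mu)$ is exactly what the paper uses.
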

Recently, Coja-Oghlan et. al.~\cite{Coja-et-al}  studied the fluctuations of the free energy of the Ising model in the high and low-temperature regimes and asked whether it is possible to study the fluctuations at criticality. Our next corollary answers this question:
\begin{cor}\label{free-evergy-ER}
Consider the Ising model on $G(n,d/n)$ at the critical temperature and let
$$Z_n:=\SL{\sigma\in\{-1,1\}^n}\exp\left(\beta\SL{u\sim v}\sigma_u\sigma_v\right)$$
be its partition function. If we set
\begin{align*}
\Delta_n:=\log(Z_n)-n\log(2)-|E_n|\log\cosh(\beta)-\log\dfrac{\sqrt[4]{n}}{\sqrt{2\pi}}+\dfrac34,
\end{align*}
where $|E_n|$ is the number of edges of the graph, then
\begin{align*}
\Delta_n\xrightarrow[n\to\infty]{(d)}&\SL{i=3}^\infty\left(\tilde{C}_i\log(1+d^{-i})-\frac{d^i+1}{2i}\right)\\&+\log\int_\R\exp\left(\frac{y^2}{\sqrt{2(d-1)}}X-\left(\dfrac{1}{12}+\dfrac{1}{4(d-1)}\right)y^4\right)\ \text{d}y
\end{align*}
where the $\tilde{C}_i\sim\text{Pois}(\frac{d^i}{2i})$ are independent, and also independent of $X\sim N(0,1)$.
\end{cor}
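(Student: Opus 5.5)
We obtain the corollary by summing the restricted partition functions $Z_{n,m}$ over all magnetizations $m$, using the joint distributional limit that underlies Theorem \ref{main-thm-2}. Write $Z_n=\SL{m}Z_{n,m}$, where $m$ ranges over integers with $m\equiv n \pmod 2$. The augmented Small Subgraph Conditioning argument for Theorem \ref{main-thm-2} should give, for $m\cdot n^{-3/4}\to y$, a first moment of the form
\[
\E\big[Z_{n,m}\cosh(\beta)^{-|E_n|}\big]\approx 2^n\binom{n}{(n+m)/2}2^{-n}\exp\Big(\tfrac{d}{2(d-1)}\cdot\tfrac{m^2}{n}\cdot(\ldots)-c\,y^4+\ldots\Big).
\]
We will take its precise form from the paper's computations. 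The key point is that the Gaussian factor $\binom{n}{(n+m)/2}2^{-n}\approx\sqrt{2/(\pi n)}\,e^{-m^2/2n}$ is cancelled at criticality by the quadratic term of the interaction, leaving a quartic term. Combined with the conditioning on the cycle counts $C_i$ and on the path count $\widehat X_{\ell_n,n}$, this should yield
\[
\frac{Z_{n,m}}{2^n\cosh(\beta)^{|E_n|}}=\sqrt{\tfrac{2}{\pi n}}\,K_n\exp\Big(\tfrac{y^2}{\sqrt{2(d-1)}}\widehat X_{\ell_n,n}-\big(\tfrac1{12}+\tfrac1{4(d-1)}\big)y^4\Big)(1+o_P(1)),
\]
uniformly over $|y|\le M$. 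Here $K_n=\exp\big(\sum_{i\le m_n}(C_i\log(1+d^{-i})-\tfrac{d^i+1}{2i})-\tfrac34\big)$ is the cycle/normalization factor. The constant $-\tfrac34$ and the term $\tfrac{d^i+1}{2i}$ come from $\log$ of the expectation of the cycle and path reweighting factors.

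Next, we sum over $m$ with spacing $2$. This gives a Riemann sum in $y=mn^{-3/4}$ with mesh $2n^{-3/4}$, so
\[
\SL{m}\frac{Z_{n,m}}{2^n\cosh(\beta)^{|E_n|}}\approx\sqrt{\tfrac{2}{\pi n}}\cdot\tfrac{n^{3/4}}{2}K_n\int_\R e^{\frac{y^2}{\sqrt{2(d-1)}}\widehat X-(\frac1{12}+\frac1{4(d-1)})y^4}\,dy.
\]
The prefactor equals $\sqrt[4]{n}/\sqrt{2\pi}$, which matches the normalization in $\Delta_n$. Taking logarithms then shows that $\Delta_n$ equals the claimed expression with $C_i,\widehat X_{\ell_n,n}$ in place of $\tilde C_i,X$, up to $o_P(1)$. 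The joint convergence of $(C_i)_{i\le m_n}$ and $\widehat X_{\ell_n,n}$ to independent Poissons and an independent standard normal, as used in Theorem \ref{main-thm-2}, then finishes the argument. The truncation of the series at $m_n$ is harmless because $\sum_i d^{-2i}\cdot d^i/i<\infty$, which gives $L^2$ convergence of the centered tail.

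The main obstacle is controlling the tails of the sum, namely the range $|m|>Mn^{3/4}$ and the uniformity of the local approximation in $m$. We will bound the tails in first moment: since $\E[Z_{n,m}\cosh(\beta)^{-|E_n|}]/2^n$ decays like $\exp(-cm^4/n^3)$ for $m\le\epsilon n$, and exponentially for larger $m$, the annealed contribution of $|y|>M$ is a fraction $e^{-cM^4}$ of the total. A Markov bound then makes this contribution negligible with probability close to $1$. This is legitimate because the random prefactor $K_n e^{O(\widehat X)}$ is tight, so a first-moment tail bound for the normalized quantity transfers. For uniformity on compacts, we use that both sides are monotone-comparable in $m$ on blocks of length $o(n^{3/4})$, or else invoke the second moment estimate for $Z_{n,m}Z_{n,m'}$ already required for Theorem \ref{main-thm-2}. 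This gives convergence of the sum over each block, and a finite union over blocks gives uniformity.
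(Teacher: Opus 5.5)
Your outline is the paper's own argument. You write $Z_n=2^n\cosh(\beta)^{|E_n|}\tilde Z$ and replace $\tilde Z^{(s)}=\sum_{|m|\le Cn^{3/4}}\tilde Z_m$ by its cycle- and path-reweighted first moment. You then turn the sum over $m$ into $\tilde c_n\int_{-C}^{C}$, remove $|m|>Cn^{3/4}$ with Lemma~\ref{right-scal-ER} and Markov, and take logarithms. However, two steps do not work as you wrote them.

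The first is the centering in $K_n$. You copied $\frac{d^i+1}{2i}$, which is $\tilde\mu_i$ (the planted Poisson mean), from the printed statement. But $\sum_i\frac{d^i+1}{2i}=\infty$, so $K_n\to0$ once your cutoff $m_n\to\infty$, whereas $\tilde Z_m/\E(\tilde Z_m)$ is tight and bounded away from $0$ with high probability. Your displayed local formula is therefore false, and the claimed limit is identically $-\infty$. The constant your own justification produces is $\log\E[(1+d^{-i})^{\tilde C_i}]=\tilde\lambda_i\tilde\delta_i=\frac1{2i}$, which is the constant in $\tilde W_{k,n}$. The paper's proof actually converges to $\tilde W_\infty=\sum_{i\ge3}(\tilde C_i\log(1+d^{-i})-\frac1{2i})$; the $\frac{d^i+1}{2i}$ in the statement is a typo. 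Your $L^2$ truncation argument is valid only with the centering $\frac1{2i}$. Also, the $-\frac34$ is simply part of $\E(\tilde Z_m)$ in Lemma~\ref{Zm-ER-mom}, not a reweighting normalization.

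The second problem is the claimed local law $(1+o_P(1))$ \emph{uniformly over $|y|\le M$}. Small subgraph conditioning does not give this, and your proposed repairs are neither established nor needed. There is no usable comparability of $Z_{n,m}$ across $m$ for a fixed graph, and the paper never computes $\E(\tilde Z_m\tilde Z_{m'})$ for $m\ne m'$. What Proposition~\ref{Zm-appr-ER} gives, for fixed $\ell,k,R$ and large $n$, is $\|\tilde Z_m-A_{m,n}\E(\tilde Z_m)\|_2\le\sqrt{2\tilde\alpha_{\ell,k,R,C}}\,\E(\tilde Z_m)$ uniformly in $|m|\le Cn^{3/4}$. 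Here $A_{m,n}=\exp(\theta_m\widehat X^{(R)}_{\ell,n}-\theta_m^2/2+\tilde W^{(R)}_{k,n})$, and $\tilde\alpha$ vanishes only in the iterated limit. Chebyshev bounds cannot be union-bounded over $\Theta(n^{3/4})$ values of $m$.

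The missing step is to sum \emph{before} using Chebyshev. By Minkowski, $\|\tilde Z^{(s)}-\sum_m A_{m,n}\E(\tilde Z_m)\|_2\le\sqrt{2\tilde\alpha}\,\E(\tilde Z^{(s)})$. The approximant is at least $c\,\E(\tilde Z^{(s)})$ with high probability, so a single Chebyshev bound then controls $\log\tilde Z^{(s)}$. For that lower bound you need $e^{\tilde W^{(R)}_{k,n}}$ bounded \emph{below} and $|\widehat X^{(R)}_{\ell,n}|$ bounded, not mere tightness. The same lower bound is what lets your Markov tail estimate be compared to $\tilde Z$. Only the full sum enters the corollary, so no per-$m$ or per-block uniformity is required.

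Finally, Proposition~\ref{cyc-path-ER} is proved only for fixed $\ell$. That is why the paper fixes $\ell,k,R$, lets $n\to\infty$, and then couples $X^{(R)}_\ell$ with $X$. Your slowly diverging $\ell_n$ and $m_n$ would need a diagonal argument on top of these results.
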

\begin{rem}
Let $Y$ be the limit described above. It is also true that
$$\left(\Delta_n,\frac{|E_n|-\frac{1}{2}dn}{\sqrt{\frac12dn}}\right)\xrightarrow[n\to\infty]{(d)}(Y,X_1),$$
where $(X,X_1)$ is a Gaussian vector, independent of the $\tilde{C}_i$, with $X,X_1\sim N(0,1)$ and $\text{Cov}(X,X_1)=\sqrt{\frac{d-1}{d}}$. Indeed in this case, $X_1$ is the limit of the normalized edge count and $X$ is the limit of the normalized of the long path count, and one could prove that $(X,X_1)$ has the above properties, with a calculation similar to that performed in the proof of Proposition \ref{cyc-path-ER}. However, to keep the paper to a manageable length, we do not pursue that here.
\end{rem}

\subsection*{Acknowledgments}  We would like to thank Nick Wormald and Eyal Lubetzky for helpful discussions.  The work was partially supported by a Simons Investigator grant.

\section{Preliminaries}\label{2}
In the entirety of this paper, whenever we write $a_n\sim b_n$ for two sequences $a_n,b_n$, we mean that $a_n/b_n\to1$ as $n\to\infty$. Also, $A\asymp B$ will mean that $c_1B\le A\le c_2B$ for some absolute constants $c_1,c_2>0$.
\subsection{Definitions}
We start by introducing the Ising model and the Glauber dynamics.
\begin{defn}
\begin{itemize}
\item 
Let $G$ be a graph with vertex set $V$. The Ising model on this graph with inverse temperature $\beta$ is the probability measure $\mu$ on $\{-1,1\}^V$ satisfying
$$\mu_{\beta}(\sigma)=\dfrac{1}{Z_{\beta,G}}\exp\left(\beta\SL{u\sim v}\sigma_u\sigma_v\right).$$
\item
Fix a measure $\nu$ on $\{-1,1\}^V$. For a configuration $\sigma\in\{-1,1\}^V$ and for some $v\in V$, let $\sigma^{\oplus v}$ be the configuration $\sigma$ in which the spin at vertex $v$ is flipped. For two configurations $\sigma,\tau\in\{-1,1\}^V$, we write $\sigma\sim\tau$ if $\tau=\sigma^{\oplus v}$ for some $v\in V$. The continuous time Glauber Dynamics on $G$ for $\nu$ at rate 1 is a Markov chain with transition rates, for $\sigma\neq\tau$, equal to
$$q(\sigma,\tau)=\begin{cases}
\dfrac{\nu(\tau)}{\nu(\sigma)+\nu(\tau)},\ \ \text{if}\ \ \sigma \sim \tau
\\ \\
0,\ \ \text{otherwise}.
\end{cases}$$
Informally, every vertex gets updated at rate one, and the update happens conditionally on the configuration in the rest of the vertices.
\end{itemize}
\end{defn}
We continue with the $p$-Wasserstein distance. The special $p=1$ case is the metric used in Theorems \ref{main-thm-1}, \ref{main-thm-2}. For a metric space $(X,\text{d})$, we denote by $\mathcal{M}_1(X)$ to be the set of probability measures on $X$.
\begin{defn}
The $p$-Wasserstein distance associated to $\text{d}$ is the function
$$W_p(\mu,\nu):=\inf\limits_{\gamma\in\mathcal{C}(\mu,\nu)}\left(\E_{(x,y)\sim\gamma}\text{d}(x,y)^p\right)^{1/p},$$
where the $\inf$ is taken over all couplings $\gamma$ of $\mu$ and $\nu$.
\end{defn}
\subsection{Moments of partition functions in regular graphs}
Let $G$ be a $d$-regular graph on $n$ vertices. We will call a measure $\mu$ on $\mathcal{X}^V$ a homogeneous \textit{spin system} if it is of the form
$$\mu(\sigma)\varpropto\PL{u\in V}\bar{\psi}(\sigma_u)\PL{u\sim v}\psi(\sigma_u,\sigma_v),$$
where $\psi:\mathcal{X}^2\to[0,\infty)$ and $\bar{\psi}:\mathcal{X}\to[0,\infty)$ are functions, called the weights of the spin system. Suppose we are given a finite spin set $\mathcal{X}$ and weights $(\overline{\psi},\psi)$ with $\psi>0$. For a configuration $\sigma\in\mathcal{X}^V$, its edge-empirical distribution is
$$h_\sigma(x,x'):=\dfrac{1}{dn}\SL{u\in V}\SL{v\in\partial u}\mathbf{1}_{\sigma_u=x,\ \sigma_v=x'}.$$
Suppose $G$ is a $d$-regular graph. Then, the edge-empirical distribution uniquely determines the vertex-empirical distribution as well:
$$\overline{h}_\sigma(x):=\dfrac{1}{n}\SL{u\in V}\mathbf{1}_{\sigma_u=x}=\SL{x'\in\mathcal{X}}h_\sigma(x,x').$$
Let $\mathcal{H}_n$ be the set of $h\in[0,1]^{\mathcal{X}^2}$ that are possible edge-empirical distributions (for some graph $G$) and $\overline{\mathcal{H}}_n$ be the set of $h\in[0,1]^\mathcal{X}$ that are possible vertex-empirical distributions. Also, for any $h\in\mathcal{H}_n$ let $A_h$ be the set of configurations that have edge-empirical density $h$. Then, under the $d$-regular configuration model,
\begin{align}\label{A-form-reg}
\nonumber\E|A_h|=\dfrac{1}{(dn-1)!!}&\cdot\dfrac{n!}{\PL{x\in\mathcal{X}}(n\overline{h}(x))!}\cdot\PL{x\in\mathcal{X}}\dfrac{(dn\overline{h}(x))!}{\PL{x'\in\mathcal{X}}(dnh(x,x'))!}\\&\cdot\PL{x\in\mathcal{X}}\left((dnh(x,x)-1)!!\cdot\PL{x'\neq x}\sqrt{(dnh(x,x'))!}\right).
\end{align}
The proof we present here can be found, for example, as an intermediate step for proving Theorem 3 in~\cite{D-M-S-S}. If we fix an edge-empirical distribution, there are $\frac{n!}{\PL{x\in\mathcal{X}}(n\overline{h}(x))!}$ ways to choose the spins of the vertices. Then, there are $\PL{x\in\mathcal{X}}\frac{(dn\overline{h}(x))!}{\PL{x'\in\mathcal{X}}(dnh(x,x'))!}$ ways to choose the spin at the half-edges and then another $\PL{x\in\mathcal{X}}\left((dnh(x,x)-1)!!\cdot\PL{x'\neq x}\sqrt{(dnh(x,x'))!}\right)$ admissible ways to find a matching of the half-edges.\\\\ 
We will use the fact that if
$$Z_h=\SL{\sigma\in A_h}\ \PL{u\in V}\overline{\psi}(\sigma_u)\PL{\{u,v\}\in E}\psi(\sigma_u,\sigma_v)$$
is the contribution of configurations $\sigma$ with $h(\sigma)=h$ to the partition function, then for every $h\in\mathcal{H}_n$,
\begin{equation}\label{h-contr-reg}
\E(Z_h)=\exp\left(n\left[\langle\overline{h},\log\overline{\psi}\rangle+\frac{d}{2}\langle h,\log\psi\rangle\right]\right)\cdot\E|A_h|.
\end{equation}
It is important to note that due to Stirling's formula, due to (\ref{A-form-reg}) for any $h$, $\E(Z_h)\asymp n^{\Theta(1)}\cdot\exp(n\Phi(h)),$ where
\begin{equation}\label{Phi-defn}
\Phi(h)=\dfrac{d}{2}H(h)-(d-1)H(\bar{h})+\dfrac{d}{2}\langle h,\log\psi\rangle+\langle\bar{h},\log\bar{\psi}\rangle.
\end{equation}
Here, we denote by $H(p)$ the Shannon entropy of a probability vector $p$. Therefore, the $h\in\mathcal{H}_n$ that are close to the maximizer of $\Phi$ are naturally the ones whose contributions dominate the expectation of the partition function. This intuition will be formalized and used repeatedly in sections \ref{4} and \ref{5}.
\section{Proofs of main Theorems}
\subsection{Proof of Theorem \ref{main-thm-1}}
In this subsection, we prove Theorem \ref{main-thm-1}. We will work with the $d$-regular configuration model on $n$ vertices instead. Since the probability of simplicity of the configuration model converges to a positive constant, proving Theorem \ref{main-thm-1} for the configuration model implies that it holds for $G_{n,d}$ as well.\\\\ 
At first, we state a useful lemma which will later imply that $n^{3/4}$ is the right scaling for the magnetization. Its proof will be in Section \ref{4}.
\begin{lem}\label{right-scal-reg}
For every $\varepsilon>0$, there exists $C=C(\varepsilon)>0$, such that
$$\PR\left[\SL{\sigma:|m(\sigma)|>Cn^{3/4}}Z_\sigma\ge\dfrac{1}{2}\sqrt{\dfrac{d-2}{d-1}}\varepsilon^2\cdot\E\left(\SL{\sigma:|m(\sigma)|\le Cn^{3/4}}Z_\sigma\right)\right]\le\varepsilon.$$
\end{lem}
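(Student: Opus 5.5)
The plan is a first-moment computation followed by Markov's inequality. Write $Z_{\ge}:=\SL{\sigma:|m(\sigma)|>Cn^{3/4}}Z_\sigma$ and $Z_{\le}:=\SL{\sigma:|m(\sigma)|\le Cn^{3/4}}Z_\sigma$. By Markov,
$$\PR\left[Z_\ge\ge \tfrac12\sqrt{\tfrac{d-2}{d-1}}\varepsilon^2\,\E Z_\le\right]\le \dfrac{2\E Z_\ge}{\sqrt{\frac{d-2}{d-1}}\varepsilon^2\,\E Z_\le},$$
so it suffices to show that $\E Z_\ge/\E Z_\le\to 0$ as $C\to\infty$, uniformly in large $n$. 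Equivalently, we need the sharp asymptotics of $\E Z_m:=\E\SL{\sigma:m(\sigma)=m}Z_\sigma$ as a function of $m$ in the configuration model.

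\textbf{Step 1: expansion of $\E Z_m$.} Using (\ref{A-form-reg}) and (\ref{h-contr-reg}) with $\mathcal{X}=\{\pm1\}$, the edge empirical distribution $h$ is determined by $m$ and the number $k$ of disagreeing edges. We then sum over $k$.
\begin{itemize}
\item With Stirling, $\E Z_{h}$ is a prefactor $n^{O(1)}$ times $\exp(n\Phi(h))$.
\item For fixed $m$, maximize $\Phi$ over $k$ and obtain a one-variable function $\phi(x)=\max_k\Phi$, where $x=m/n$.
\item At $\beta_c=\tanh^{-1}((d-1)^{-1})$, the quadratic term of $\phi$ at $x=0$ vanishes; this is the Bethe critical point. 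Its Taylor expansion is $\phi(x)=\phi(0)-c_dx^4+O(x^6)$ with $c_d=\frac{(d-2)(d-1)}{12d^2}$, matching the density in Theorem \ref{main-thm-1}.
\item The Gaussian sum over $k$ around its maximizer gives a prefactor that is smooth in $x$ near $0$. Hence, uniformly for $|m|\le \delta n$,
$$\E Z_m\asymp \frac{2^n\cosh(\beta)^{dn/2}}{\sqrt n}\exp\left(-c_d\,\frac{m^4}{n^3}(1+O(m^2/n^2))\right).$$
\item For $|m|\ge\delta n$, use that $\Phi$ has a unique global maximizer, which is the paramagnetic one at criticality. This gives $\E Z_m\le e^{-c(\delta)n}\E Z_0$.
\end{itemize}

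\textbf{Step 2: summation.} Summing over $m$ of fixed parity, $\E Z_\le\asymp \frac{2^n\cosh(\beta)^{dn/2}}{\sqrt n}\, n^{3/4}\int_{-C}^{C}e^{-c_dy^4}dy$, and $\E Z_\ge$ has the same prefactor times $n^{3/4}\int_{|y|>C}e^{-c_dy^4/2}dy$, plus an exponentially small term. For $Cn^{3/4}<|m|\le\delta n$ we use $(1+O(\delta^2))\ge 1/2$. The ratio is therefore at most $K\int_{|y|>C}e^{-c_dy^4/2}dy$, with $K$ independent of $n$. Choosing $C(\varepsilon)$ large makes the Markov bound at most $\varepsilon$. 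The constant $\sqrt{(d-2)/(d-1)}$ is harmless; presumably it is there to match the later second-moment ratio.

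\textbf{Main obstacle.} The hard part is Step 1: justifying the fourth-order expansion of $\Phi$ uniformly, with Stirling errors controlled, on the mesoscopic range $n^{3/4}\ll|m|\ll n$, together with the global uniqueness of the maximizer. I would handle the latter first. I would parametrize $h$ explicitly, eliminate $k$ through the stationarity equation (a quadratic equation in $k$), and then check concavity-type bounds on $\phi$. Finally, I would verify directly that $\phi''(0)=0$ exactly at $(d-1)\tanh\beta=1$ and that $\phi''''(0)<0$.
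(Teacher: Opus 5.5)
Your proposal is correct and is essentially the paper's argument: Markov's inequality, combined with first-moment bounds from the quartic expansion of $\Phi_1$ at $h_*$ (where the quadratic term in $m/n$ vanishes at $\beta_c$) and uniqueness of the global maximizer. The only difference is bookkeeping: instead of eliminating the disagreement count and proving a uniform Laplace asymptotic for $\E Z_m$ on $n^{3/4}\ll|m|\ll n$, the paper uses Lemma \ref{comp-fn-Taylor} to get the global bound $\Phi_1(h)\le\Phi_1(h_*)-c\bigl((\tfrac{s}{n}-\tfrac{d}{2(d-1)})^2+(\tfrac{m}{n})^4\bigr)$ and counts the $O(n^{5/4}k^{3/4})$ empirical distributions in each shell of this quantity, which needs only an upper bound and so sidesteps the uniformity issue you flag as the main obstacle.
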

To prove Theorem \ref{main-thm-1}, we will need a relation between the second moment and the first moment of $Z_m$ and the distribution of cycle counts under the regular and the planted measure.
\begin{lem}\label{Zm-reg-mom} Let $C>0$ and $m=m(n)$ be a sequence of positive integers with the same parity as $n$, with $|m|\le Cn^{3/4}$. Then,
$$\E(Z_m)\sim c_{d,n}\cdot\exp\left(-\dfrac{(d-1)(d-2)}{12d^2}\cdot \dfrac{m^4}{n^3}\right),$$
where 
$$c_{d,n}=\dfrac{2^{1/2}\sqrt{d-1}}{\sqrt{n}\sqrt{\pi d}}\cdot(2\cdot\cosh(\beta)^{d/2})^n,$$ 
and
$$\dfrac{\E(Z_m^2)}{\E(Z_m)^2}\xrightarrow[n\to\infty]{}\sqrt{\dfrac{d-1}{d-2}}.$$
Moreover, both of these convergences are uniform in $m$.
\end{lem}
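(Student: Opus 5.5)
We compute both moments with the configuration-model formulas (\ref{A-form-reg}) and (\ref{h-contr-reg}), and then do a local expansion of the exponent around its critical maximizer.

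\textbf{First moment.} For the Ising model, $\bar\psi\equiv1$ and $\psi(x,x')=e^{\beta xx'}$. Fixing $m$ fixes $\bar h=((1+m/n)/2,(1-m/n)/2)$. The only remaining free parameter of $h$ is the number $k$ of $(+,-)$ edges, so
$$\E(Z_m)=\sum_k \E(Z_{h_k}),$$
and by Stirling each summand is $n^{O(1)}\exp(n\Phi(h_k))$ with explicit polynomial prefactors.

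Write $\bar h$ through $s=m/n$ and $h$ through $s$ and the $+-$ fraction $t$. For fixed $s$, maximizing over $t$ gives a Legendre-type reduced function $\phi(s)=\max_t\Phi(s,t)$. This is exactly the Bethe free energy restricted to magnetization $s$. The plan is to Taylor expand $\phi$ around $s=0$:
\begin{itemize}
\item the quadratic coefficient vanishes exactly at $\tanh\beta=(d-1)^{-1}$;
\item the quartic coefficient comes out as $-\frac{(d-1)(d-2)}{12d^2}$.
\end{itemize}
This is a direct but tedious computation, done most cleanly by parametrizing via the belief-propagation fixed point: the maximizing $t$ for given $s$ corresponds to an external field $B$ with $s$ solving the tree recursion.

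Since $|s|\le Cn^{-1/4}$, we get $n\phi(s)=n\phi(0)-\frac{(d-1)(d-2)}{12d^2}\frac{m^4}{n^3}+o(1)$ uniformly. The sum over $k$ is a Gaussian sum around the optimal $t$, with width $\sqrt n$. Combining the Gaussian sum with the Stirling prefactors of (\ref{A-form-reg}) gives the constant $c_{d,n}$: the $n^{-1/2}$ comes from $\binom{n}{n\bar h}$ against the $k$-sum, and the $\sqrt{(d-1)/d}$ factor comes from the Hessian in $t$ together with the $(dn-1)!!$ normalizations. To nail the constant, I would check it against $\sum_m\E(Z_m)=\E Z=2^n\cosh(\beta)^{dn/2}\cdot$(known constant) at high temperature, or simply compute the determinant directly.

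\textbf{Second moment.} Here $\E(Z_m^2)$ is the first moment of a 4-spin system on pairs $(\sigma,\tau)$ with weight $\psi\otimes\psi$. The constraint is that both marginal magnetizations equal $m$, and the free parameters are the overlap $q$ (the vertex empirical distribution on $\{\pm\}^2$) and the edge statistics. Reducing as before gives a function $\phi_2(s,q)$. At $s=0$ this is the usual pair function, whose Hessian in the overlap direction is proportional to $1-(d-1)\tanh^2\beta=(d-2)/(d-1)>0$. The overlap direction is therefore non-degenerate even at criticality; only the magnetization direction is degenerate, and that direction is fixed by conditioning on $m$.

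Laplace's method then applies in $q$ and the edge variables, around the maximizer with overlap $\approx m^2/n$. The quartic terms in $s$ appear as $2\times$ the first-moment quartic, which cancels against $\E(Z_m)^2$. What remains is the ratio of Gaussian determinants, which should equal $\big(1-(d-1)\tanh^2\beta\big)^{-1/2}=\sqrt{(d-1)/(d-2)}$. This matches the small-subgraph-conditioning prediction $\exp\big(\sum_i\lambda_i\delta_i^2\big)$ with $\lambda_i=(d-1)^i/(2i)$ and $\delta_i=(d-1)^{-i}$, where the $i=1,2$ terms are excluded. I would verify the constant that way.

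\textbf{Main obstacle.} The main obstacle is uniformity and the global step. One must show that contributions of $(q,t)$ away from the maximizer are negligible. This requires the global maximum of $\phi_2$ at fixed small $s$ to be at the expected point. That follows from the known second-moment analysis at $s=0$ in the uniqueness regime (the maximum lies at $q=0$ for $\beta\le\beta_c$), together with continuity and non-degeneracy in $q$ for $|s|\le Cn^{-1/4}$.

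Uniformity in $m$ follows because all error terms in the Taylor and Stirling expansions are $O(n|s|^5+n^{-1/2})=O(n^{-1/4})$ uniformly over $|m|\le Cn^{3/4}$.
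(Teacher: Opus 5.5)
Your plan is correct and is essentially the paper's proof. The shared ingredients are the configuration-model formula with Stirling and Laplace's method, and a Taylor expansion of $\Phi_1$ in $m/n$ and the single free edge parameter; the paper expands jointly in $(s(h),m)$ and completes the square rather than using a BP/external-field parametrization. For the second moment, both use the two-copy model reduced (via the maximizer of $\Phi_{\bar h}$ at fixed vertex distribution) to the overlap variable, with the quartic term doubling and a Hessian-determinant ratio giving $\sqrt{(d-1)/(d-2)}$.

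One correction to your sanity check. Since you work in the configuration model, loops and double edges must be included: indeed $\exp\bigl(\sum_{i\ge1}\frac{(d-1)^{-i}}{2i}\bigr)=\sqrt{\frac{d-1}{d-2}}$. Dropping $i=1,2$ would instead give the simple-graph value $\sqrt{\frac{d-1}{d-2}}\,e^{-\frac{1}{2(d-1)}-\frac{1}{4(d-1)^2}}$, which does not match your claimed limit.
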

\vspace{1em}
For each $i\ge3$, let $Y_{i,n}$ be the number of cycles of length $i$ present in $G$. For $i=2$, $Y_{2,n}$ is the number of double edges in $G$, whereas $Y_{1,n}$ is the number of loops in $G$. Regarding the distribution of $Y_{i,n}$, we prove the following proposition:
\begin{prop}\label{cyc-reg}
\begin{enumerate}
\item 
For each $i\ge1$,
$$Y_{i,n}\xrightarrow[n\to\infty]{(d)}C_i\sim\text{Pois}\left(\dfrac{(d-1)^i}{2i}\right).$$
These convergences hold jointly, and the limits $C_i$ are independent.
\item
Let $m$ be as in Lemma \ref{Zm-reg-mom}. Under the planted measure $\PR_m^*$, for any $i\ge1$,
$$Y_{i,n}\xrightarrow[n\to\infty]{(d)}C_i'\sim\text{Pois}\left(\dfrac{(d-1)^i+1}{2i}\right).$$
As before, these convergences hold jointly, and the limits are independent.
\end{enumerate}
\end{prop}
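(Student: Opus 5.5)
Part 1 is the classical result of Bollobás and Wormald for the configuration model, so the work lies in part 2. I would prove both parts with the method of factorial moments. It suffices to show that for fixed $k$ and fixed nonnegative integers $r_1,\dots,r_k$,
$$\E\left[\PL{i=1}^k (Y_{i,n})_{r_i}\right]\to\PL{i=1}^k\lambda_i^{r_i},$$
where $(x)_r$ denotes the falling factorial. For part 1 we take $\lambda_i=\frac{(d-1)^i}{2i}$, and for part 2 we take $\lambda_i=\frac{(d-1)^i+1}{2i}$, with the expectation now under $\PR_m^*$. The left side counts ordered tuples of distinct cycles, namely $r_i$ cycles of length $i$ for each $i$. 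Tuples in which some cycles share a vertex contribute $O(n^{-1})$, because their union has more edges than vertices. So only vertex-disjoint tuples matter, and for these the expectation nearly factorizes into a product of single-cycle probabilities. It therefore suffices to compute, for one fixed cycle of length $i$ on labelled vertices and half-edges, the probability that it is present, and to check that this probability is asymptotically multiplicative over boundedly many disjoint cycles.

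\textbf{Unplanted case.} There are $\frac{(n)_i (d(d-1))^i}{2i}$ ways to choose a cycle: its vertices, and a pair of half-edges at each vertex, up to rotation and reflection. Each is present with probability $\prod_{j<i}(dn-2j-1)^{-1}\sim (dn)^{-i}$. This gives mean $\frac{(d-1)^i}{2i}$. Loops ($i=1$) and double edges ($i=2$) are handled by the same count.

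\textbf{Planted case.} Under $\PR^*_m$ we first draw $\sigma$ with $\sum_v\sigma_v=m$ and then the graph with probability proportional to $e^{\beta\sum_{u\sim v}\sigma_u\sigma_v}$. Equivalently, we pick $\sigma$ with edge-empirical distribution $h$ distributed according to the contributions $\E Z_h$ in (\ref{h-contr-reg}), and then take a uniform matching consistent with $h$. By the concentration implicit in Lemma \ref{Zm-reg-mom}, $h$ lies within $o(1)$ of the maximizer $h^*$. Since $m=O(n^{3/4})$, this maximizer is $h^*(+,+)=h^*(-,-)=\frac{e^\beta}{2(e^\beta+e^{-\beta})}$ and $h^*(+,-)=h^*(-,+)=\frac{e^{-\beta}}{2(e^\beta+e^{-\beta})}$. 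Conditioned on $h$, we can compute the probability that a given labelled cycle is present. Traversing the cycle, each step from a half-edge of spin $x$ lands on a half-edge of spin $x'$ with probability approximately $\frac{h(x,x')}{\bar h(x)}\cdot\frac{1}{dn\bar h(x')}$ per specific half-edge. Summing over vertex spin assignments gives
$$\frac{(d(d-1))^i}{2i}\,(dn)^{-i}\,\mathrm{tr}\!\left(P^i\right)n^i,\qquad P_{xx'}=\frac{h(x,x')}{\bar h(x)}.$$
Here $\bar h\approx(\frac12,\frac12)$, so $P$ has eigenvalues $1$ and $\tanh\beta=\frac{1}{d-1}$ at $\beta_c$. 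Hence
$$\mathrm{tr}(P^i)\to 1+(d-1)^{-i},$$
and the mean becomes $\frac{(d-1)^i+1}{2i}$, as claimed. For disjoint tuples the same computation multiplies, because removing $O(1)$ half-edges perturbs $h$ by $O(1/n)$. This yields the joint factorial moments and hence joint convergence to independent Poissons.

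\textbf{Main obstacle.} The delicate step is justifying that, under the planted measure, the effective $h$ may be replaced by $h^*$ uniformly. The fluctuations of $h$ around $h^*$ are of order $n^{-1/2}$ in the edge direction, and the magnetization is of order $n^{-1/4}$ after normalization. Since $\mathrm{tr}(P^i)$ is a continuous function of $h$, it suffices that these deviations tend to $0$ in probability. To show this I would use the Laplace-type estimate on $\Phi$ around its maximizer that underlies Lemma \ref{Zm-reg-mom}.
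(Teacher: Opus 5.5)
Your proposal is correct and follows essentially the paper's route. The paper likewise restricts to $h\in\mathcal{H}^{(1)}_{n,m}$ and computes the planted-cycle ratio by explicit configuration counting and Stirling, getting $\E(Z_h\mid C\in G)/\E(Z_h)\sim\sum_{\sigma_C}\prod_j 2h(\sigma_j,\sigma_{j+1})\to1+(d-1)^{-i}$, which is your $\mathrm{tr}(P^i)$. It then discards far-away $h$ via (\ref{contr-h-away}) and notes that overlapping cycles cost $O(1/n)$ in the factorial moments.

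One small correction: convergence of $h$ to $h^*$ in probability does not by itself give convergence of the factorial moments. What closes the argument, as in the paper, is that the planted mass of far-away $h$ is $e^{-cn^{1/5}}n^{O(1)}$. This beats the deterministic polynomial bound on $\prod_i (Y_{i,n})_{r_i}$, and the Laplace estimate you invoke does provide it.
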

In view of Proposition \ref{cyc-reg}, we set $\delta_i=(d-1)^{-i}$, $\lambda_i=\frac{(d-1)^i}{2i}$ and for any $k\in\N$,
\begin{align*}
W_{k,n}=&\SL{i=1}^k\left(Y_{i,n}\log(1+\delta_i)-\lambda_i\delta_i\right),\ \ W_{k,n}^{(R)}=\sgn(W_{k,n})\min(|W_{k,n}|,R)\\
W_k=&\SL{i=1}^k\left(C_i\log(1+\delta_i)-\lambda_i\delta_i\right),\ \  W_k^{(R)}=\sgn(W_k)\min(|W_k|,R),\\ W_k'=&\SL{i=1}^k\left(C_i'\log(1+\delta_i)-\lambda_i\delta_i\right),\ \ W_k'^{(R)}=\sgn(W_k')\min(|W_k'|,R)\ \ \text{and}\\
W_\infty=&\SL{i=1}^\infty(C_i\log(1+\delta_i)-\lambda_i\delta_i),\ \ \text{and}\ \ W_\infty'=\SL{i=1}^\infty(C_i'\log(1+\delta_i)-\lambda_i\delta_i).
\end{align*}
\begin{rem}
\begin{itemize}
\item
For any $\delta_0\in(0,1)$ and $k\in\N$,
\begin{equation}\label{Wk-size}
\PR\left(e^{W_k}\le\delta_0\right)=\PR\left(e^{-W_k}\ge\delta_0^{-1}\right)\le\delta_0\cdot\E(e^{-W_k})=\delta_0\cdot\exp\left(\SL{i=1}^k\frac{\lambda_i\delta_i^2}{1+\delta_i}\right)\le\delta_0\cdot\sqrt{\dfrac{d-1}{d-2}}.
\end{equation}
\item 
Since $W_k^{(R)}\ge W_k$ when $W_k<0$, the same inequality holds for $W_k^{(R)}$ as well.
\item 
$\exp(2W_k)\xrightarrow[k\to\infty]{}\exp(2W_\infty)$ and $\exp(W_k')\xrightarrow[k\to\infty]{}\exp(W_\infty')$, both a.s. and in $L^2$, as was noted in \cite{Janson1995}.
\end{itemize}
\end{rem}
Our main claim is the following:
\begin{prop}\label{Zm-appr-reg}
For any $k\in\N$ and $C>0$,
$$\limsup\limits_{n\to\infty}\sup\limits_{|m|\le Cn^{3/4}}\E\left[\left(\dfrac{Z_m}{\E(Z_m)}-\exp\left(W_{k,n}^{(R)}\right)\right)^2\right]\le\alpha_{k,R,C}$$
where
$$\lim\limits_{R\to\infty}\lim\limits_{k\to\infty}\alpha_{k,R,C}=0$$
for any $C>0$.
\end{prop}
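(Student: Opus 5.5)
We expand the square and treat the three resulting terms separately. Writing $\E_m^*$ for expectation under the planted measure $\PR_m^*$, defined by $d\PR_m^*/d\PR=Z_m/\E(Z_m)$, we have
$$\E\left[\left(\frac{Z_m}{\E Z_m}-e^{W_{k,n}^{(R)}}\right)^2\right]=\frac{\E(Z_m^2)}{\E(Z_m)^2}-2\,\E_m^*\!\left[e^{W_{k,n}^{(R)}}\right]+\E\left[e^{2W_{k,n}^{(R)}}\right].$$
By Lemma \ref{Zm-reg-mom}, the first term converges uniformly in $|m|\le Cn^{3/4}$ to $\sqrt{\frac{d-1}{d-2}}$.

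For the remaining two terms we use that $W_{k,n}^{(R)}$ is a bounded continuous function of $(Y_{1,n},\dots,Y_{k,n})$. Proposition \ref{cyc-reg} then gives, by weak convergence,
$$\E\big[e^{2W_{k,n}^{(R)}}\big]\to\E\big[e^{2W_k^{(R)}}\big],\qquad \E_m^*\big[e^{W_{k,n}^{(R)}}\big]\to\E\big[e^{W_k'^{(R)}}\big].$$
The only extra point here is uniformity in $m$. The convergence in part 2 of Proposition \ref{cyc-reg} should be uniform over $|m|\le Cn^{3/4}$, since the planted cycle-count computation depends on $m$ only through $m/n\to0$ uniformly. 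If it is stated only along sequences, we argue by contradiction: choose a sequence $m_n$ nearly attaining the supremum and apply the proposition to that sequence. Hence
$$\limsup_{n\to\infty}\sup_{|m|\le Cn^{3/4}}\E[\cdots]\le \alpha_{k,R,C}:=\sqrt{\tfrac{d-1}{d-2}}-2\E\big[e^{W_k'^{(R)}}\big]+\E\big[e^{2W_k^{(R)}}\big],$$
which in fact does not depend on $C$.

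It remains to show $\lim_R\lim_k\alpha_{k,R,C}=0$, and this is the main step. The plan is to remove the truncation using the $L^2$ convergence from the remark, then compute with Poisson generating functions.
\begin{itemize}
\item Since $|e^{W^{(R)}}|\le e^{|W|}$ and the truncation only bites when $|W_k|>R$, Cauchy--Schwarz together with the uniform-in-$k$ bounds on $\E e^{\pm2W_k}$ and $\E e^{2W'_k}$ controls the truncation error. These bounds follow from $\E e^{tW_k}=\exp(\sum_i\lambda_i[(1+\delta_i)^t-1-t\delta_i])$ and its analogue with means $\lambda_i(1+\delta_i)$, because $\sum_i\lambda_i\delta_i^2<\infty$ for $d\ge3$. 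They also give $\PR(|W_k|>R)\to0$ uniformly in $k$ as $R\to\infty$.
\item To make the $R$-limit compatible with taking $k\to\infty$ first, we bound $\alpha_{k,R,C}$ by its untruncated counterpart plus an error $\eta(R)$ that is uniform in $k$.
\end{itemize}
Without truncation we compute directly, using $\lambda_i'=\lambda_i(1+\delta_i)$:
$$\E e^{2W_k}=\exp\Big(\sum_{i\le k}\lambda_i\big[(1+\delta_i)^2-1-2\delta_i\big]\Big)=\exp\Big(\sum_{i\le k}\lambda_i\delta_i^2\Big),$$
$$\E e^{W_k'}=\exp\Big(\sum_{i\le k}\big[\lambda_i(1+\delta_i)\delta_i-\lambda_i\delta_i\big]\Big)=\exp\Big(\sum_{i\le k}\lambda_i\delta_i^2\Big).$$
Both tend to $\exp\big(\sum_{i\ge1}\frac{(d-1)^{-i}}{2i}\big)=\exp\big(-\tfrac12\log(1-\tfrac1{d-1})\big)=\sqrt{\tfrac{d-1}{d-2}}$. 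Therefore
$$\lim_{k\to\infty}\big(\sqrt{\tfrac{d-1}{d-2}}-2\E e^{W_k'}+\E e^{2W_k}\big)=0,$$
and with the truncation errors $\eta(R)\to0$ this yields the claim. The part requiring the most care is the uniform-in-$k$ truncation estimate. We would handle it with the exponential-moment bounds above, which are finite precisely because $\sum_i\lambda_i\delta_i^2=\frac12\log\frac{d-1}{d-2}<\infty$.
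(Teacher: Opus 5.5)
Your argument is essentially the paper's: the same expansion of the square via the planted measure, the same choice of $\alpha_{k,R,C}$, and the same use of Lemma~\ref{Zm-reg-mom} and Proposition~\ref{cyc-reg}, with a sequence argument giving uniformity in $m$. The only variation is how you remove the truncation: you use exponential-moment bounds that are uniform in $k$, whereas the paper applies dominated convergence at $k=\infty$. Note that for the $e^{2W_k^{(R)}}$ term, Cauchy--Schwarz needs a uniform bound on $\E e^{4W_k}$ rather than only $\E e^{\pm 2W_k}$; your generating-function formula supplies this just as easily.
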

We explain why Proposition \ref{Zm-appr-reg} is enough to finish the proof of Theorem \ref{main-thm-1}.
\begin{proof}[Proof of Theorem \ref{main-thm-1}]
Let $\varepsilon\in(0,1)$. Choose $C>0$ large enough, so that the statement of Lemma \ref{right-scal-reg} holds and so that $\mu_\text{reg}([-C,C])>1-\varepsilon$. Also, let $-C=y_0<y_1<\cdots<y_s=C$ be real numbers so that for any $i=0,1,\dots,s-1$,
$$\mu_{\text{reg}}([y_i,y_{i+1}])<\varepsilon.$$
By Skorokhod's representation Theorem, it is possible couple the relevant random variables, so that the convergences in Proposition \ref{cyc-reg} are almost sure convergences. Note that the coupling implied in the Wasserstein distance in this case is trivialized, since $\mu_{\text{reg}}$ is deterministic. Choose $\delta_1,\delta_2,\delta>0$ such that: 
\begin{equation}\label{delta-reg}
\PR(\exp(W_k)\le\delta_1)\le\varepsilon,\ \  \frac{2\delta_2}{\delta_1-\delta_2}\le\varepsilon,\ \ \text{and}\ \ \frac{s\delta}{\delta_2^2}\le\varepsilon.
\end{equation}
Due to (\ref{Wk-size}), it is possible to set $\delta_1=\sqrt{\frac{d-2}{d-1}}\cdot\varepsilon$. Since $\varepsilon\in(0,1)$, $\delta_2<\delta_1/2$. Also, for each $i\in\{1,2,\dots,s\}$, let
$$Z^{(i)}=\SL{-Cn^{3/4}\le m\le y_i n^{3/4}}Z_m.$$
Because of Proposition \ref{Zm-reg-mom}, we can set
$$\sup\limits_n\dfrac{\sup\limits_{|m|\le Cn^{3/4}}\E(Z_m)}{\inf\limits_{|m|\le Cn^{3/4}}\E(Z_m)}=:M<\infty.$$
For any $i$, set $N_i=\#\{m:-Cn^{3/4}\le m\le y_in^{3/4}\}$. Due to Proposition \ref{Zm-appr-reg}, for every $i$, if $n$ is large enough,
\begin{align*}
\E\left[\left(Z^{(i)}-\exp\left(W_{k,n}^{(R)}\right)\E(Z^{(i)})\right)^2\right]&=\E\left[\left(\SL{-Cn^{3/4}\le m\le y_in^{3/4}}(Z_m-\exp\left(W_{k,n}^{(R)}\right)\E(Z_m))\right)^2\right]
\\&\le N_i^2\cdot\sup\limits_{-Cn^{3/4}\le m\le y_in^{3/4}}\E\left[\left(Z_m-\exp(W_{k,n}^{(R)})\E(Z_m)\right)^2\right]\\&\le N_i^2\cdot2\alpha_{k,R,C}\cdot\left(\sup\limits_{-Cn^{3/4}\le m\le y_in^{3/4}}\E(Z_m)\right)^2\\&\le2M^2\alpha_{k,R,C}\left(\E(Z^{(i)})\right)^2.
\end{align*}
Therefore, if $k,R$ and $n$ are large enough,
\begin{equation}\label{Zi-appr-reg}
\E\left[\left(\dfrac{Z^{(i)}}{\E(Z^{(i)})}-\exp\left(W_{k,n}^{(R)}\right)\right)^2\right]\le\delta.
\end{equation}
Consider the event
$$A_n^{(1)}:=\left\{\exp\left(W_{k,n}^{(R)}\right)>\delta_1\right\}\cap\bigcap\limits_{i=1}^s\left\{\left|\dfrac{Z^{(i)}}{\E(Z^{(i)})}-\exp\left(W_{k,n}^{(R)}\right)\right|\le\delta_2\right\}.$$
On the event $A_n^{(1)}$,
$$\dfrac{Z^{(s)}}{\E(Z^{(s)})}\ge\delta_1-\delta_2\ge\dfrac{1}{2}\sqrt{\dfrac{d-2}{d-1}}\varepsilon.$$
Therefore, on the event
$$A_n=A_n^{(1)}\cap\left\{\SL{\sigma:|m(\sigma)|>Cn^{3/4}}Z_\sigma\le\dfrac{1}{2}\sqrt{\dfrac{d-2}{d-1}}\varepsilon^2\cdot\E\left(Z^{(s)}\right)\right\},$$
it is true that
$$\dfrac{\SL{\sigma:|m(\sigma)|>Cn^{3/4}}Z_\sigma}{Z}<\dfrac{\SL{\sigma:|m(\sigma)|>Cn^{3/4}}Z_\sigma}{Z^{(s)}}\le\varepsilon\ \ \Rightarrow\ \dfrac{Z-Z^{(s)}}{Z}<\varepsilon.$$
We prove that for large $n$, $\PR(A_n)\ge1-4\varepsilon$. Due to (\ref{Zi-appr-reg}) and Chebyshev's inequality,
$$\PR\left(\bigcup\limits_{i=1}^s\left\{\left|\dfrac{Z^{(i)}}{\E(Z^{(i)})}-\exp\left(W_{k,n}^{(R)}\right)\right|>\delta_2\right\}\right)\le s\cdot\dfrac{\delta}{\delta_2^2}\le\varepsilon.$$
Also, by our choice of $\delta_1$, if $n$ is large enough,
$$\PR\left(\exp\left(W_{k,n}^{(R)}\right)\le\delta_1\right)\le2\varepsilon,$$
which indeed implies that $\PR(A_n^c)\le4\varepsilon$.\\\\
Our claim is that on the event $A_n$,
\begin{equation}\label{conv-prob-reg}
\text{d}_{\text{KS}}(\mu_{n,\text{reg}},\mu_{\text{reg}})<10\varepsilon.
\end{equation}
This will imply the result, as
$$\E[\text{d}_{\text{KS}}(\mu_{n,\text{reg}},\mu_{\text{reg}})]\le10\varepsilon+\PR(A_n^c)<14\varepsilon$$
if $n$ is large enough, since $\text{d}_{\text{KS}}(\mu,\nu)\le1$ for any measures $\mu,\nu$ due to the definition of $\text{d}_{\text{KS}}$. We first prove that on the event $A_n$,
\begin{equation}\label{conv-y_i-reg}
\sup\limits_{1\le i\le s}|\mu_{n,\text{reg}}([-C,y_i])-\mu_{\text{reg}}([-C,y_i])|<4\varepsilon.
\end{equation}
Indeed,
\begin{align*}
&|\mu_{n,\text{reg}}([-C,y_i])-\mu_{\text{reg}}([-C,y_i])|\\=\ & \left|\dfrac{Z^{(i)}}{Z}-\dfrac{\int_{[-C,y_i]}\exp\left(-\frac{(d-1)(d-2)}{12d^2}u^4\right)\ \text{d}u}{\int_\R\exp\left(-\frac{(d-1)(d-2)}{12d^2}u^4\right)\ \text{d}u}\right|
\\\le\ & \left|\dfrac{Z^{(i)}}{Z}-\dfrac{Z^{(i)}}{Z^{(s)}}\right|+\left|\dfrac{\int_{[-C,y_i]}\exp\left(-\frac{(d-1)(d-2)}{12d^2}u^4\right)\ \text{d}u}{\int_{[-C,C]}\exp\left(-\frac{(d-1)(d-2)}{12d^2}u^4\right)\ \text{d}u}-\dfrac{\int_{[-C,y_i]}\exp\left(-\frac{(d-1)(d-2)}{12d^2}u^4\right)\ \text{d}u}{\int_\R\exp\left(-\frac{(d-1)(d-2)}{12d^2}u^4\right)\ \text{d}u}\right|\\+&\left|\dfrac{Z^{(i)}}{Z^{(s)}}-\dfrac{\E(Z^{(i)})}{\E(Z^{(s)})}\right|+\left|\dfrac{\E(Z^{(i)})}{\E(Z^{(s)})}-\dfrac{\int_{[-C,y_i]}\exp\left(-\frac{(d-1)(d-2)}{12d^2}u^4\right)\ \text{d}u}{\int_{[-C,C]}\exp\left(-\frac{(d-1)(d-2)}{12d^2}u^4\right)\ \text{d}u}\right|.
\end{align*}
We show that on the event $A_n$, each one of these terms is $<\varepsilon$.
\begin{enumerate}
\item 
Since $\frac{Z-Z^{(s)}}{Z}<\varepsilon$, and due to the way $C$ was chosen, the first two terms are $<\varepsilon$.
\item 
When $n\to\infty$, due to Lemma \ref{Zm-reg-mom}, the last term is deterministic and converges to 0:
\begin{align*}
\E(Z_i)&\sim c_{d,n}\SL{-Cn^{3/4}\le m\le y_in^{3/4}}\exp\left(-\dfrac{(d-1)(d-2)}{12d^2}\cdot\frac{m^4}{n^3}\right)
\\&\sim c_{d,n}\frac{n^{3/4}}{2}\int_{-C}^{y_i}\exp\left(-\frac{(d-1)(d-2)}{12d^2}u^4\right)\ \text{d}u,
\end{align*}
because of the approximation of the integral by a Riemann sum.
\item 
On the event $A_n$, $Z^{(s)}\ge(\delta_1-\delta_2)\cdot\E(Z^{(s)})$ and 
$$\left|\dfrac{Z^{(i)}}{\E(Z^{(i)})}-\dfrac{Z^{(s)}}{\E(Z^{(s)})}\right|\le\left|\dfrac{Z^{(i)}}{\E(Z^{(i)})}-\exp\left(W_{k,n}^{(R)}\right)\right|+\left|\dfrac{Z^{(s)}}{\E(Z^{(s)})}-\exp\left(W_{k,n}^{(R)}\right)\right|\le2\delta_2,$$
which implies
$$\left|\dfrac{Z^{(i)}}{Z^{(s)}}-\dfrac{\E(Z^{(i)})}{\E(Z^{(s)})}\right|\le\left|\dfrac{Z^{(i)}}{\E(Z^{(i)})}-\dfrac{Z^{(s)}}{\E(Z^{(s)})}\right|\cdot\dfrac{\E(Z^{(s)})}{Z^{(s)}}\le\dfrac{2\delta_2}{\delta_1-\delta_2}\le\varepsilon.$$
\end{enumerate}
We have successfully proven (\ref{conv-y_i-reg}). Let $y\in\R$. If $|y|>C$, due to the way $C$ was chosen, if $n$ is large enough, $|\mu_{n,\text{reg}}((-\infty,y])-\mu_{\text{reg}}((-\infty,y])|<\varepsilon$. On the other hand, if $|y|\le C$, 
$$|\mu_{n,\text{reg}}((-\infty,y])-\mu_{\text{reg}}((-\infty,y])|\le\varepsilon+|\mu_{n,\text{reg}}([-C,y])-\mu_{\text{reg}}([-C,y])|$$
and there exists $0\le i\le s-1$ such that $y_i\le y\le y_{i+1}$. For this $i$, we claim that $\mu_{n,\text{reg}}([-C,y])$ and $\mu_{\text{reg}}([-C,y])$ lie in the interval $I=[\mu_{\text{reg}}([-C,y_i])-4\varepsilon,\mu_{\text{reg}}([-C,y_{i+1}])+4\varepsilon]$. This will mean that their difference is at most the length of the interval, which is $\le9\varepsilon$, concluding the proof. This fact is obvious for $\mu_{\text{reg}}([-C,y])$. As for $\mu_{n,\text{reg}}([-C,y])$, observe that on the event $A_n$, if $n$ is large enough,
\begin{align*}
\mu_{\text{reg}}([-C,y_i])-4\varepsilon\le\mu_{n,\text{reg}}([-C,y_i])&\le\mu_{n,\text{reg}}([-C,y])\\&\le\mu_{n,\text{reg}}([-C,y_{i+1}])\le\mu_{\text{reg}}([-C,y_{i+1}])+4\varepsilon.
\end{align*}
This completes the proof of Theorem \ref{main-thm-1}.
\end{proof}
It, therefore, remains to show Proposition \ref{Zm-appr-reg}, given Lemma \ref{Zm-reg-mom} and Proposition \ref{cyc-reg}.
\begin{proof}[Proof of Proposition \ref{Zm-appr-reg}]
We show the Proposition with
$$\alpha_{k,R,C}:=\sqrt{\dfrac{d-1}{d-2}}-2\cdot\E\left[\exp\left(W_k'^{(R)}\right)\right]+\E\left[\exp\left(2W_k^{(R)}\right)\right].$$
Due to the dominated convergence theorem and the integrability of $\exp(W_\infty'), \exp(2W_\infty)$,
\begin{align*}
\lim\limits_{R\to\infty}\lim\limits_{k\to\infty}\E\left[\exp\left(W_k'^{(R)}\right)\right]&=\PL{i=1}^\infty e^{-\lambda_i\delta_i}\E\left((1+\delta_i)^{C_i'}\right)=\PL{i=1}^\infty\exp\left(\lambda_i\delta_i^2\right)=\sqrt{\dfrac{d-1}{d-2}}\ \ \text{and}
\\\lim\limits_{R\to\infty}\lim\limits_{k\to\infty}\E\left[\exp\left(2W_k^{(R)}\right)\right]&=\PL{i=1}^\infty e^{-2\lambda_i\delta_i}\E\left((1+\delta_i)^{2C_i}\right)=\PL{i=1}^\infty\exp(\lambda_i\delta_i^2)=\sqrt{\dfrac{d-1}{d-2}},
\end{align*}
so the condition claimed about the $\alpha_{k,R,C}$ holds.\\\\
For any sequence of $m=m(n)$ as in Lemma \ref{Zm-reg-mom},
\begin{align*}
\E\left[\left(\dfrac{Z_m}{\E(Z_m)}-\exp\left(W_{k,n}^{(R)}\right)\right)^2\right]=&\ \dfrac{\E(Z_m^2)}{\E(Z_m)^2}-2\cdot\E_m^*\left(\exp\left(W_{k,n}^{(R)}\right)\right)+\E\left(\exp\left(2W_{k,n}^{(R)}\right)\right)\\\underset{n\to\infty}{\to}&\ \alpha_{k,R,C}
\end{align*}
which implies the result.
\end{proof}
\subsection{Proof of Theorem \ref{main-thm-2}}
In this subsection, we complete the proof of Theorem \ref{main-thm-2}. In this case, we study a normalized version of the partition function. This is necessary; without it, proving concentration is impossible, due to, for example, the fluctuations in the number of edges. For $\sigma\in\{-1,1\}^n$, let
$$\tilde{Z}_\sigma=2^{-n}\PL{1\le u<v\le n}\dfrac{\exp(\beta\sigma_u\sigma_v\cdot\mathbf{1}_{u\sim v})}{\cosh(\beta)^{\mathbf{1}_{u\sim v}}}.$$
Naturally, for any $m$ that has the same parity as $n$, we set
$$\tilde{Z}_m:=\SL{\sigma:m(\sigma)=m}\tilde{Z}_\sigma\ \ \text{and}\ \ \tilde{Z}=\SL{m}\tilde{Z}_m.$$
Also, we denote by $\PR_m^*$ the planted measure on the set of graphs, defined by $\PR_m^*(A)=\frac{\E(\tilde{Z}_m\mathbf{1}_A)}{\E(\tilde{Z}_m)}$.\\\\
At first, we prove the analog of the initial Lemma \ref{right-scal-reg} in the Erd\"os-R\'enyi case.
\begin{lem}\label{right-scal-ER}
Set
$$\tilde{c}_0=\inf\limits_{C>0}\liminf\limits_{n\to\infty}\dfrac{\SL{|m|\le n^{3/4}}\E(\tilde{Z}_m)}{\SL{|m|\le Cn^{3/4}}\E(\tilde{Z}_m)}.$$
Then, $\tilde{c}_0>0$ and for any $\varepsilon>0$, there exists $C=C(\varepsilon)>0$ such that
$$\PR\left[\SL{|m|>Cn^{3/4}}\tilde{Z}_m\ge\dfrac{\tilde{c}_0}{4}\sqrt{\dfrac{d}{d-1}}\varepsilon^2\exp\left(-\dfrac{\varepsilon^{-1}}{\sqrt{2(d-1)}}-\dfrac{1}{4(d-1)}\right)\cdot\E\left(\SL{|m|\le Cn^{3/4}}\tilde{Z}_m\right)\right]<\varepsilon.$$
\end{lem}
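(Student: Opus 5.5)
The plan is a first-moment (Markov) argument combined with an explicit asymptotic for $\E(\tilde Z_m)$. Lemma \ref{right-scal-reg} is presumably proved the same way.

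\textbf{Step 1: first moment.} For $\sigma$ with $m(\sigma)=m$, let $k=(n+m)/2$ be the number of plus spins. The edges of $G(n,d/n)$ are independent, so
$$\E(\tilde Z_\sigma)=2^{-n}\prod_{u<v}\left(1-\tfrac dn+\tfrac dn\tfrac{e^{\beta\sigma_u\sigma_v}}{\cosh\beta}\right)=2^{-n}\prod_{u<v}\left(1+\tfrac dn\tanh(\beta)\sigma_u\sigma_v+O(n^{-2})\right).$$
Using $\sum_{u<v}\sigma_u\sigma_v=(m^2-n)/2$ and $\tanh\beta=1/d$, this gives
$$\E(\tilde Z_m)=2^{-n}\binom{n}{k}\exp\left(\frac{m^2}{2n}+c+O\!\left(\frac{m^2}{n^2}\right)\right)$$
for an explicit constant $c$. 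The second-order term of $\log(1+x)$ contributes only a constant, since there $(\sigma_u\sigma_v)^2=1$.

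Expanding $\log\left(2^{-n}\binom nk\right)$ by Stirling gives
$$-\frac{m^2}{2n}-\frac{m^4}{12n^3}+O(m^6/n^5).$$
The quadratic terms cancel exactly, as expected at criticality. The result is
$$\E(\tilde Z_m)\asymp n^{-1/2}\exp\left(-\frac{m^4}{12n^3}\right)$$
uniformly for $|m|\le n^{1-\eta}$.

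This shows that $\tilde c_0>0$. The sums over $|m|\le Cn^{3/4}$ are Riemann sums of $e^{-u^4/12}$, so the ratio defining $\tilde c_0$ tends to $\int_{-1}^1e^{-u^4/12}du\big/\int_{-C}^Ce^{-u^4/12}du$, which is bounded below uniformly in $C$.

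\textbf{Step 2: the tail.} For $n^{1-\eta}<|m|\le n$, bound the product crudely by $\exp(\frac{m^2}{2n}+O(1))$ and use the entropy bound
$$2^{-n}\binom nk\le\exp\left(-n\left[\tfrac{(1+x)}2\log(1+x)+\tfrac{(1-x)}2\log(1-x)\right]\right),\qquad x=m/n.$$
The exponent $x^2/2-I(x)$ is then $\le -c x^4$ on $[-1,1]$; this holds because $\tanh(\beta)d=1$, i.e.\ $I(x)-x^2/2$ is the Curie--Weiss critical rate function, which is positive away from $0$. Summing over $m$, we get
$$\sum_{|m|>Cn^{3/4}}\E(\tilde Z_m)\le n^{-1/4}\cdot n^{3/4}\, O\!\left(\int_C^\infty e^{-cu^4}du\right)\cdot n^{-1/2}\ldots$$
More precisely, this sum is at most $\eta(C)\sum_{|m|\le Cn^{3/4}}\E(\tilde Z_m)$ with $\eta(C)\to0$ as $C\to\infty$, because the denominator is of order $n^{1/4}$ (since $n^{-1/2}\cdot n^{3/4}$).

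\textbf{Step 3: Markov.} By Markov's inequality, the probability in the lemma is at most
$$\eta(C)\Big/\left(\tfrac{\tilde c_0}{4}\sqrt{\tfrac d{d-1}}\,\varepsilon^2 e^{-\varepsilon^{-1}/\sqrt{2(d-1)}-1/(4(d-1))}\right).$$
For fixed $\varepsilon$ the denominator is a fixed positive constant, so choosing $C$ large makes the bound $<\varepsilon$.

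\textbf{Main obstacle.} The hardest part is making Step 1 uniform, with the $O(m^2/n^2)$ and $O(n^{-1})$ product corrections controlled over the whole range. In the intermediate range $n^{3/4}\ll|m|\le n^{1-\eta}$, and in the tail, the uniform quartic lower bound on $I(x)-x^2/2$ must absorb the error terms. This is routine, since those errors are $O(m^2/n)\cdot O(1/n)$, and I would handle it by splitting at $|m|=\delta n$.
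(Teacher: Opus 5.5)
Your proposal is correct and follows essentially the same route as the paper. Both arguments compute $\E(\tilde Z_m)\sim\sqrt{2/(\pi n)}\,\exp(-m^4/(12n^3)-3/4)$ from the exact edge product, use Riemann sums to show $\tilde c_0>0$ and that the sums over $|m|\le Cn^{3/4}$ are of order $n^{1/4}$, prove a tail bound $\E(\tilde Z_m)\lesssim n^{-1/2}e^{-cm^4/n^3}$ for $|m|\ge Cn^{3/4}$, and finish with Markov's inequality; the only difference is that the paper gets the tail bound from Lemma \ref{comp-fn-Taylor}, whereas you get it from $I(x)-x^2/2\ge x^4/12$. One small correction: your two-sided $\asymp$ is not literally uniform up to $|m|\le n^{1-\eta}$ because of the $O(m^6/n^5)$ Stirling term, but beyond $Cn^{3/4}$ you only need the upper bound, which holds because every Taylor coefficient of $I(x)-x^2/2$ is nonnegative.
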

As in the previous subsection, we compute the first and second moments of $\tilde{Z}_m$.
\begin{lem}\label{Zm-ER-mom}
Let $m=m(n)$ be a sequence of positive integers with the same parity as $n$, with $|m|\le Cn^{3/4}$. Then,
$$\E(\tilde{Z}_m)\sim\dfrac{\sqrt{2}}{\sqrt{\pi n}}\cdot\exp\left(-\dfrac{1}{12}\cdot\dfrac{m^4}{n^3}-\dfrac{3}{4}\right)$$
and
$$\dfrac{\E(\tilde{Z}_m^2)}{\E(\tilde{Z}_m)^2}\sim\exp\left(\dfrac{1}{2(d-1)}\cdot\dfrac{m^4}{n^3}+\SL{i=3}^\infty\dfrac{d^{-i}}{2i}\right).$$
Moreover, these convergences are uniform in $m$.
\end{lem}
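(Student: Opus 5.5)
The plan is to compute both moments exactly, using independence of edges in $G(n,d/n)$, and then evaluate the resulting sums over configurations by a Laplace/local-CLT expansion that is carried to fourth order in the magnetization.

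\textbf{First moment.} For $s\in\{-1,1\}$ we have $e^{\beta s}/\cosh\beta=1+s\tanh\beta$, and $p\tanh\beta=\frac dn\cdot\frac1d=\frac1n$. Hence for each pair $u<v$,
$$\E\left[\dfrac{\exp(\beta\sigma_u\sigma_v\mathbf{1}_{u\sim v})}{\cosh(\beta)^{\mathbf{1}_{u\sim v}}}\right]=1+\dfrac{\sigma_u\sigma_v}{n},$$
and by independence $\E(\tilde Z_\sigma)=2^{-n}\prod_{u<v}(1+\sigma_u\sigma_v/n)$. Expanding the logarithm gives
$$\sum_{u<v}\Big(\frac{\sigma_u\sigma_v}{n}-\frac1{2n^2}\Big)+O(n^{-1})=\frac{m^2-n}{2n}-\frac14+o(1),$$
since $\sum_{u<v}\sigma_u\sigma_v=(m^2-n)/2$. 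Thus $\E(\tilde Z_\sigma)\sim 2^{-n}e^{m^2/(2n)-3/4}$, and this depends on $\sigma$ only through $m$. It remains to multiply by $\binom{n}{(n+m)/2}$. Stirling's formula, expanded to fourth order in $m/n$, gives
$$2^{-n}\binom{n}{(n+m)/2}\sim\sqrt{\frac{2}{\pi n}}\exp\Big(-\frac{m^2}{2n}-\frac{m^4}{12n^3}\Big),$$
uniformly for $|m|\le Cn^{3/4}$. The $m^2$ terms cancel, which is exactly where criticality enters, and we obtain the first claim.

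\textbf{Second moment.} In the same way,
$$\E(\tilde Z_\sigma\tilde Z_\tau)=4^{-n}\prod_{u<v}\Big(1+\frac{\sigma_u\sigma_v+\tau_u\tau_v}{n}+\frac{\rho_u\rho_v}{dn}\Big),\qquad \rho=\sigma\tau.$$
Let $q=\sum_u\rho_u$ be the overlap. Expanding the logarithm to second order and using $\sum_{u<v}\sigma_u\sigma_v\tau_u\tau_v=(q^2-n)/2$, the product becomes
$$\exp\Big(\frac{m^2}{n}+\frac{q^2}{2dn}+\kappa_d+o(1)\Big),$$
where $\kappa_d$ is an explicit constant. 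Its ingredients are the $-1/2-1/(2d)$ from the linear terms, and the quadratic terms $-\frac{1}{2n^2}\sum(a+b+c/d)^2$ with cross terms $2ab=2c$. We then sum over pairs $(\sigma,\tau)$ with $m(\sigma)=m(\tau)=m$, parametrizing by the four type counts $n_{\pm\pm}$. We apply a local CLT for this multinomial, again kept to quartic order in $m$. Writing $q=m^2/n+z\sqrt n$, conditionally on both magnetizations $z$ is asymptotically $N(0,1)$. The weight $e^{q^2/(2dn)}$ contributes $e^{m^4/(2dn^3)}$ times the integral
$$\int \exp\Big(-\frac{(1-d^{-1})z^2}{2}+\frac{m^2}{dn^{3/2}}z\Big)\,\frac{dz}{\sqrt{2\pi}}=\sqrt{\frac{d}{d-1}}\;e^{m^4/(2d(d-1)n^3)}.$$
Adding the two exponents gives $\frac{m^4}{2n^3}\cdot\frac1d\cdot\frac{d}{d-1}=\frac{m^4}{2(d-1)n^3}$. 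The marginal factors reproduce $\E(\tilde Z_m)^2$, including the $-m^4/(12n^3)$ terms, so after dividing we are left with the constant $\sqrt{d/(d-1)}\,e^{\kappa_d+3/2}$. Finally we check that $\kappa_d+3/2=-\frac1{2d}-\frac1{4d^2}$, so that this constant equals
$$\exp\Big(\tfrac12\log\tfrac{d}{d-1}-\tfrac1{2d}-\tfrac1{4d^2}\Big)=\exp\Big(\sum_{i\ge3}\frac{d^{-i}}{2i}\Big).$$

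\textbf{Main obstacle.} The delicate step is the joint Laplace expansion for the pair. It must be uniform in $|m|\le Cn^{3/4}$ and correct to the orders $m^4/n^3$ and $O(1)$. In particular we must verify that every quartic entropy term in the joint four-type multinomial either cancels against the square of the first moment or enters only through the Gaussian tilt in $z$. We must also control the tails in $q$: the exponent $-(1-1/d)z^2/2$ is strictly concave because $d>1$, and on $|q|\gg\sqrt n$ a crude large-deviation bound shows the contribution is negligible. We would handle the expansion by writing the entropy of $(n_{\pm\pm})$ in the coordinates $(m_1,m_2,q)$ and Taylor expanding to fourth order, noting that only the terms with $m_1=m_2=m$ survive. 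Uniformity in $m$ then follows because all error terms are $O(n^{-1/4})$ on this range.
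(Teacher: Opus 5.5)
Your proposal is correct and follows essentially the same route as the paper. Both use the exact product formula coming from edge independence, Stirling for the binomial and multinomial counts, and a Gaussian sum over the normalized overlap $t=(q-m^2/n)/\sqrt n$ (your $z$), with a linear tilt producing $\frac{m^4}{2d(d-1)n^3}$ and the tails controlled by the concavity $-\frac{d-1}{2d}t^2$. There is one bookkeeping slip: the linear terms contribute $-1-\frac{1}{2d}$, not $-\frac12-\frac1{2d}$, and the quadratic ones contribute $-\frac12-\frac{1}{4d^2}$. Your final value $\kappa_d=-\frac32-\frac1{2d}-\frac1{4d^2}$ is nonetheless right and agrees with the paper's $-\frac{1+2d+6d^2}{4d^2}$.
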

Let $X_{\ell,n}$ denote the number of paths of length $\ell$ present in $G$ and 
\begin{equation*}
\widehat{X}_{\ell,n}:=\dfrac{X_{\ell,n}-\frac{1}{2}nd^\ell}{\sqrt{\frac12nd^{2\ell}\frac{\ell^2}{d-1}}},
\end{equation*}
as introduced in (\ref{hat-X-def}). 
Moreover, for any $R>0$, set $\widehat{X}_{\ell,n}^{(R)}=\sgn(\widehat{X}_{\ell,n})\min(|\widehat{X}_{\ell,n}^{(R)}|,R)$.\\\\
For each $i\ge3$, let $\tilde{Y}_{i,n}$ be the number of cycles of length $i$ present in $G$. The second ingredient we will need is the asymptotic distribution of $\widehat{X}_{\ell,n}$ and $\tilde{Y}_{i,n}$ both under the regular and the planted measure.
\begin{prop}\label{cyc-path-ER}
\begin{enumerate}
\item\label{pois-cycle-ER}
For each $i\ge3$,
$$\tilde{Y}_{i,n}\xrightarrow[n\to\infty]{(d)}\tilde{C}_i\sim\text{Pois}\left(\frac{d^i}{2i}\right).$$
These convergences hold jointly, and the limits $C_i$ are independent.
\item 
For any $\ell\in\N$,
$$\widehat{X}_{\ell,n}\xrightarrow[n\to\infty]{(d)}X_\ell\sim N\left(0,1+\gamma_\ell^{(2)}\right),$$
where $$\gamma_\ell^{(2)}:=\dfrac{d-1}{\ell^2}\SL{k=1}^\ell\left[(\ell-k+1)^2\cdot d^{-k}\right]-1\xrightarrow[\ell\to\infty]{}0.$$ 
Also, this convergence holds jointly with the convergences in \ref{pois-cycle-ER} and the limits are independent.
\item\label{pois-cycle-pl-ER}
Let $m$ be as in Lemma \ref{Zm-ER-mom}. Under the planted measure $\PR_m^*$, for any $i\ge3$
$$\tilde{Y}_{i,n}\xrightarrow[n\to\infty]{(d)}\tilde{C}_i'\sim\text{Pois}\left(\frac{d^i+1}{2i}\right).$$
As before, these convergences hold jointly, and the limits are independent.
\item 
Let $m$ be as in Lemma \ref{Zm-ER-mom}, moreover satisfying the condition that $m\cdot n^{-3/4}\xrightarrow[n\to\infty]{}x$. Under the planted measure $\PR_m^*$, for any $\ell\in\N$
$$\widehat{X}_{\ell,n}\xrightarrow[n\to\infty]{(d)}X_\ell'\sim N\left(\dfrac{x^2}{\sqrt{2(d-1)}}\left(1+\gamma_\ell^{(1)}\right),1+\gamma_\ell^{(2)}\right),$$
where
$$\gamma_\ell^{(1)}:=\dfrac{d-1}{\ell}\SL{k=1}^\ell\left[(\ell-k+1)\cdot d^{-k}\right]-1\xrightarrow[\ell\to\infty]{}0.$$
Also, this convergence holds jointly with the convergences in \ref{pois-cycle-pl-ER} and the limits are independent.
\end{enumerate}
\end{prop}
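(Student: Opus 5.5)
We will prove all four parts by the method of (factorial/mixed) moments: for the cycle counts we compute joint factorial moments, and for the path count we compute joint mixed moments of $\widehat{X}_{\ell,n}$ with the falling factorials of the $\tilde Y_{i,n}$. Under the planted measure we use the identity $\E_m^*[F(G)]=\E[\tilde Z_m F(G)]/\E[\tilde Z_m]$. A convenient reformulation follows from the definition of $\tilde Z_\sigma$: under $\PR^*_m$, first pick $\sigma$ uniformly among configurations of magnetization $m$ (up to the weights, which are uniform on $\{m(\sigma)=m\}$ after the $2^{-n}$ normalization). Given $\sigma$, each pair $uv$ is then an edge independently with probability
\[
p_{uv}=\frac{d}{n}\cdot\frac{e^{\beta\sigma_u\sigma_v}}{\cosh\beta}\cdot\Big(1+O(n^{-1})\Big)=\frac{d}{n}\big(1+\tanh(\beta)\sigma_u\sigma_v\big)+O(n^{-2}).
\]
So the planted graph is a two-block stochastic block model with block sizes $(n\pm m)/2$ and $\tanh\beta=1/d$.

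\textbf{Cycle counts (parts 1 and 3).} The unplanted statement is the classical result for $G(n,d/n)$. In the planted model, the expected number of $i$-cycles is
\[
\frac{n^i}{2i}\Big(\frac dn\Big)^i\,\E_\sigma\prod_{j}\big(1+d^{-1}\sigma_{v_j}\sigma_{v_{j+1}}\big)
\]
over the cycle's vertices. Expanding the product, only the empty set and the full cycle survive in the leading order, since the spins are nearly i.i.d.\ symmetric ($m/n\to0$). This gives $\frac{d^i}{2i}(1+d^{-i})=\frac{d^i+1}{2i}$. Joint factorial moments factor in the same way over vertex-disjoint cycle tuples, and overlapping tuples contribute $o(1)$; Poisson convergence with independent limits follows.

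\textbf{Path counts (parts 2 and 4).} This is the main work. Write $X_{\ell,n}=\sum_P \mathbf 1_P$ over potential paths $P$ with $\ell$ edges. In the unplanted case, $\E X_{\ell,n}=\frac{n^{\ell+1}}2(d/n)^\ell(1+o(1))$. The variance is dominated by pairs of paths sharing a single segment of $k$ edges, contributing about $\frac12 n d^{2\ell}\sum_k(\ell-k+1)^2d^{-k}$ after accounting for placements. Normalizing gives the stated $1+\gamma^{(2)}_\ell$.

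To get Gaussianity jointly with the cycles, we compute higher centered moments $\E[(X_{\ell,n}-\E X_{\ell,n})^r\prod(\tilde Y_i)_{a_i}]$. The combinatorial grouping of $r$ paths into overlapping clusters shows that only perfect pairings contribute, which gives Wick's formula. Clusters that touch short cycles are lower order, giving independence. Since $\ell$ is fixed, every cluster shape is finite, so this is bookkeeping rather than analysis.

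For the planted mean in part 4, $\E^*_m X_{\ell,n}$ equals the unplanted mean times $\E_\sigma\prod_{j=1}^\ell(1+d^{-1}\sigma_{v_{j-1}}\sigma_{v_j})$. Expanding over subsets of edges and averaging over $\sigma$ with $\E\sigma_u\sigma_v\approx (m/n)^2-1/n$ for distinct vertices, a subset decomposing into $r$ runs gives $d^{-|S|}$ times the $2r$ endpoint spin products. Only $r=1$ contributes at the relevant scale $n^{-1/2}$. A run of $k$ edges has $\ell-k+1$ placements and contributes $d^{-k}\big((m/n)^2-1/n\big)$.

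The shift is therefore
\[
\tfrac12 nd^\ell\sum_k(\ell-k+1)d^{-k}\Big(\frac{m^2}{n^2}-\frac1n\Big).
\]
Dividing by $\sqrt{\frac12 nd^{2\ell}\ell^2/(d-1)}$, the $m^2/n^2$ term gives $\frac{x^2}{\sqrt{2(d-1)}}(1+\gamma^{(1)}_\ell)$. The $-1/n$ term is $O(n^{-1/2})$ and negligible. The same $-1/n$ correction shows up in the unplanted normalization only through the $O(1)$ term, which is also negligible.

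\textbf{Main obstacle.} The variance and higher moments under $\PR^*_m$ require checking that the block structure shifts only the mean at scale $\sqrt n d^\ell$, while the fluctuations keep variance $1+\gamma^{(2)}_\ell$. The correlations induced through the shared $\sigma$, namely terms like $\mathrm{Cov}_\sigma$ of the per-path planted factors, are of order $n\cdot(m/n)^4 n^{2}\cdot n^{-2}$ relative to the unplanted variance.

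To control these, we would first condition on $\sigma$ and prove a conditional CLT via moments, using that given $\sigma$ the edges are independent. We would then show that the conditional mean, as a function of $\sigma$, concentrates. It is a polynomial in the block sizes plus a degree-two chaos in $\sigma$ whose fluctuation is $O(n^{-1/2})$ after normalization, by a second-moment computation over uniform $\sigma$ with fixed magnetization.

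Uniformity in $m$ with $|m|\le Cn^{3/4}$ follows because all error terms depend on $m$ only through $m^2/n^{3/2}\le C^2$.
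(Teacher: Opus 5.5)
Your proposal is correct, and its skeleton is the paper's. The planted mean of $X_{\ell,n}$ comes from expanding the weight along the path; the paper writes this as $\E_{\mu_\ell}\bigl[\prod_i(1+\sigma_{w_i}m/n)\bigr]$, which is your run expansion with $\E\sigma_u\sigma_v\approx m^2/n^2$. Joint Gaussian/Poisson convergence then follows from the Ruci\'nski--Janson mixed-moment argument. What you call bookkeeping, the paper carries out in detail: it uses the overlap graph $L$, the class $\mathcal{L}_0$ of perfect matchings of the paths, and the count $\sum_i (v(F_i)-e(F_i))>r_1/2$ outside $\mathcal{L}_0$.

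You depart from the paper in the planted case. The paper just asserts that the moment computation under $\PR_m^*$ is identical. But under $\PR_m^*$ the edge indicators of vertex-disjoint subgraphs are correlated through $\sigma$, so the step ``an isolated path factor is independent with mean zero, hence $T=0$'' is not literally true. Your device fixes this: condition on $\sigma$ to get a two-block model with independent edges, run the cluster argument conditionally, then un-condition. Your route is therefore the more careful one.

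You can simplify it further. Given $\sigma$, edge $uv$ is present independently with a probability depending only on $\sigma_u\sigma_v$. Both $X_{\ell,n}$ and the $\tilde Y_{i,n}$ are invariant under relabeling vertices, and any two $\sigma$ with magnetization $m$ differ by a relabeling. So the conditional joint law of $(X_{\ell,n},\tilde Y_{3,n},\dots)$ given $\sigma$ does not depend on $\sigma$ at all. In particular $\E[X_{\ell,n}\mid\sigma]$ and $\mathrm{Var}(X_{\ell,n}\mid\sigma)$ are exactly deterministic. No concentration or second-moment computation over $\sigma$ is needed. This also settles the conditional variance, which your sketch does not address explicitly: it equals its $\sigma$-average, which is asymptotically the unplanted variance because $P\cup Q$ is a tree.

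One heuristic in your write-up is wrong, though you never use it. You estimate the $\sigma$-induced correlations at relative order $n\cdot(m/n)^4n^2\cdot n^{-2}$, but this equals $m^4/n^3=\Theta(1)$, which would not be negligible. The correct statement, consistent with the rest of your plan, is $\mathrm{Var}_\sigma\bigl(\E[X_{\ell,n}\mid\sigma]\bigr)=0$.
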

In view of \ref{pois-cycle-pl-ER}, we set $\tilde{\delta}_i=d^{-i}$, $\tilde{\lambda}_i=\frac{d^i}{2i}$ and $\tilde{\mu}_i=\tilde{\lambda}_i(1+\tilde{\delta}_i)$. Also, just as in the proof of Theorem \ref{main-thm-1}, let
\begin{align*}
\tilde{W}_{k,n}:=&\SL{i=3}^k\left(\tilde{Y}_{i,n}\log(1+\tilde{\delta}_i)-\tilde{\lambda}_i\tilde{\delta}_i\right),\ \ \tilde{W}_{k,n}^{(R)}=\sgn(\tilde{W}_{k,n})\min(|\tilde{W}_{k,n}|,R)\\
\tilde{W}_k=&\SL{i=3}^k\left(\tilde{C}_i\log(1+\tilde{\delta}_i)-\tilde{\lambda}_i\tilde{\delta}_i\right),\ \  \tilde{W}_{k}^{(R)}=\sgn(\tilde{W}_{k})\min(|\tilde{W}_{k}|,R)\ \ \text{and}\\ \tilde{W}_k'=&\SL{i=3}^k\left(\tilde{C}_i'\log(1+\tilde{\delta}_i)-\tilde{\lambda}_i\tilde{\delta}_i\right)\ \ \text{and}\ \ \tilde{W}_k'^{(R)}=\sgn(\tilde{W}_{k}')\min(|\tilde{W}_{k}'|,R).
\end{align*}
Our main claim is the following:
\begin{prop}\label{Zm-appr-ER}
For any $\ell,k\in\N$ and $R,C>0$,
$$\limsup\limits_{n\to\infty}\sup\limits_{|m|\le Cn^{3/4}}\E\left[\left(\dfrac{\tilde{Z}_m}{\E(\tilde{Z}_m)}-\exp\left(\theta_m\widehat{X}_{\ell,n}^{(R)}-\theta_m^2/2+\tilde{W}_{k,n}^{(R)}\right)\right)^2\right]\le\tilde{\alpha}_{\ell,k,R,C}$$
where $$\theta_m=\dfrac{(m\cdot n^{-3/4})^2}{\sqrt{2(d-1)}}\ \ \text{and}\ \ \lim\limits_{R\to\infty}\lim\limits_{\ell\to\infty}\lim\limits_{k\to\infty}\tilde{\alpha}_{\ell,k,R,C}=0$$ 
for any $C>0$.
\end{prop}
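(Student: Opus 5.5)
We mirror the proof of Proposition \ref{Zm-appr-reg}. Write $V_{n}:=\exp\left(\theta_m\widehat{X}_{\ell,n}^{(R)}-\theta_m^2/2+\tilde{W}_{k,n}^{(R)}\right)$ and expand the square, using $\E(\tilde{Z}_m V_n)=\E(\tilde{Z}_m)\,\E_m^*(V_n)$:
$$\E\left[\left(\dfrac{\tilde{Z}_m}{\E(\tilde{Z}_m)}-V_n\right)^2\right]=\dfrac{\E(\tilde{Z}_m^2)}{\E(\tilde{Z}_m)^2}-2\,\E_m^*(V_n)+\E(V_n^2).$$
All three terms can be computed asymptotically from Lemma \ref{Zm-ER-mom} and Proposition \ref{cyc-path-ER}. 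The supremum over $|m|\le Cn^{3/4}$ is handled by a standard compactness argument. If the $\limsup$ of the supremum exceeded the bound, we could choose a sequence $m(n)$ realizing it. Passing to a subsequence, $m n^{-3/4}\to x\in[-C,C]$, so $\theta_m\to\theta:=x^2/\sqrt{2(d-1)}$. It therefore suffices to compute the limit along such sequences and then take $\tilde\alpha_{\ell,k,R,C}$ to be the supremum over $x\in[-C,C]$ of the resulting limit.

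Along such a sequence the three limits are as follows.

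\emph{Second moment ratio.} By Lemma \ref{Zm-ER-mom} it converges to $\exp\left(\theta^2+\SL{i=3}^\infty\frac{d^{-i}}{2i}\right)$, since $\frac{1}{2(d-1)}x^4=\theta^2$.

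\emph{Cross term.} The truncated exponential is bounded and continuous, and under $\PR_m^*$ we have joint convergence with independence (parts 3 and 4 of Proposition \ref{cyc-path-ER}). Hence $\E_m^*(V_n)\to e^{-\theta^2/2}\,\E\left[e^{\theta X_\ell'^{(R)}}\right]\E\left[e^{\tilde W_k'^{(R)}}\right]$.

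\emph{Last term.} In the same way, parts 1 and 2 give $\E(V_n^2)\to e^{-\theta^2}\,\E\left[e^{2\theta X_\ell^{(R)}}\right]\E\left[e^{2\tilde W_k^{(R)}}\right]$.

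This limit is continuous in $x$, so the supremum over the compact set $[-C,C]$ is attained, and we define $\tilde\alpha_{\ell,k,R,C}$ as that supremum.

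It remains to check that $\tilde\alpha\to0$ under the iterated limit. First let $k\to\infty$. As in the regular case, $\E[e^{\tilde W_k'^{(R)}}]$ and $\E[e^{2\tilde W_k^{(R)}}]$ converge, and after $R\to\infty$ both become $\PL{i\ge3}\exp(\tilde\lambda_i\tilde\delta_i^2)=\exp\left(\SL{i\ge3}\frac{d^{-i}}{2i}\right)$. This factor matches the cycle part of the second moment.

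Next let $\ell\to\infty$ at fixed $R$. We have $X_\ell\Rightarrow N(0,1)$ and $X_\ell'\Rightarrow N(\theta,1)$, because $\gamma_\ell^{(1)},\gamma_\ell^{(2)}\to0$. Since the truncation is bounded and continuous, the Gaussian factors converge uniformly in $\theta\in[0,C^2/\sqrt{2(d-1)}]$. Then, as $R\to\infty$, dominated convergence (the Gaussian exponential moments are finite) gives
$$e^{-\theta^2/2}\E e^{\theta N(\theta,1)}=e^{\theta^2}\quad\text{and}\quad e^{-\theta^2}\E e^{2\theta N(0,1)}=e^{\theta^2}.$$
The total limit is then $e^{\theta^2}e^{S}(1-2+1)=0$, where $S=\SL{i\ge3}\frac{d^{-i}}{2i}$.

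One point requires care. The $k$- and $R$-limits of the cycle factors and of the Gaussian factors are coupled, because the truncation level $R$ is shared. Since all factors are bounded uniformly in $\theta$ on the compact range, and each product converges monotonically or by dominated convergence, the iterated limit in the order $k$, then $\ell$, then $R$ is legitimate.

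The main obstacle is not in this reduction, which is routine once Lemma \ref{Zm-ER-mom} and Proposition \ref{cyc-path-ER} are granted. It lies in the uniformity needed to swap the supremum over $m$ with the limits. Two things must be shown: the convergence in Proposition \ref{cyc-path-ER} under $\PR_m^*$ holds along every convergent sequence $m n^{-3/4}\to x$, and the limiting quantities are continuous in $x$. I would handle this with the subsequence argument above, using that the limiting laws depend continuously on $x$ through the mean $\frac{x^2}{\sqrt{2(d-1)}}(1+\gamma_\ell^{(1)})$.
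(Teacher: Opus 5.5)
Your proposal is correct and is essentially the paper's proof. You expand the square via the planted measure, define $\tilde\alpha_{\ell,k,R,C}$ as the supremum over $|x|\le C$ of the limit given by Lemma \ref{Zm-ER-mom} and Proposition \ref{cyc-path-ER}, check that the cycle factors tend to $\exp(\sum_{i\ge3}d^{-i}/(2i))$ and the Gaussian factors to $e^{\theta^2}$, and handle the supremum over $m$ by extracting a subsequence with $mn^{-3/4}\to x$.

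One detail should be stated explicitly: because $\tilde\alpha$ is a supremum over $x$, the $R\to\infty$ step for the Gaussian factors must be uniform in $\theta\in[0,C^2/\sqrt{2(d-1)}]$. This follows from dominated convergence with a $\theta$-independent dominating function, just as the paper obtains it from its Cauchy--Schwarz tail bound.
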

We explain how this Proposition implies Theorem \ref{main-thm-2}.
\begin{proof}[Proof of Theorem \ref{main-thm-2}]
Due to the Skorokhod's representation Theorem, we may couple all the graphs and random variables such that the convergences in Proposition \ref{cyc-path-ER} are almost sure convergences. We claim that if $\ell,R$ and $n$ are large enough, then
$$\text{d}_{\text{KS}}\left(\mu_{n,\text{ER}},\mu^{(X_\ell^{(R)})}\right)\le18\varepsilon.$$
This will imply the desired result: If $\ell$ and $R$ are large enough, we can couple $X_\ell^{(R)}$ with a $X\sim N(0,1)$ random variable such that $X_\ell^{(R)}=X$ with probability $\ge1-\varepsilon$. So,
$$\mathcal{W}_1\left(\mu^{(X_\ell^{(R)})},\mu\right)\le\PR(X_\ell^{(R)}\neq X)\le\varepsilon,$$
which means that if $n$ is large enough,
$$\mathcal{W}_1(\mu_{n,\text{ER}},\mu)\le19\varepsilon,$$
concluding the proof of Theorem \ref{main-thm-2}. We, therefore, have to prove this claim.\\\\
Choose $\ell\in\N$ and $C>0$ large enough, so that Lemma \ref{right-scal-ER} holds and both $\{|X_\ell|\ge \varepsilon^{-1}\}$ and $\{\mu^{(X_\ell^{(R)})}([-C,C])\le1-\varepsilon\}$ have probability $\le\varepsilon$. Observe that if $|x|\le\varepsilon^{-1}$, for any $y\in\R$,
\begin{align*}
&\dfrac{x}{\sqrt{2(d-1)}}\cdot y^2-\left(\dfrac{1}{4(d-1)}+\dfrac{1}{12}\right)\cdot y^4\le\dfrac{x^2}{2}\le\dfrac{\varepsilon^{-2}}{2}\ \ \text{and}\\
\int_\R&\exp\left(\dfrac{x}{\sqrt{2(d-1)}}\cdot y^2-\left(\dfrac{1}{4(d-1)}+\dfrac{1}{12}\right)\cdot y^4\right) \text{d}y\\&\ge e^{-\frac{1}{4(d-1)}-\frac{1}{12}}\int_0^1\exp\left(\dfrac{x}{\sqrt{2(d-1)}}\cdot y^2\right)\text{d}y\\&\ge\exp\left(-\dfrac{1}{4(d-1)}-\dfrac{1}{12}-\dfrac{\varepsilon^{-1}}{\sqrt{2(d-1)}}\right).
\end{align*}
This means that there exists a constant $D=D(\varepsilon)>0$ such that for any $y_1<y_2$,
$$\dfrac{\int_{y_1}^{y_2}\exp\left(\dfrac{x}{\sqrt{2(d-1)}}\cdot u^2-\left(\dfrac{1}{4(d-1)}+\dfrac{1}{12}\right)\cdot u^4\right) \text{d}u}{\int_\R\exp\left(\dfrac{x}{\sqrt{2(d-1)}}\cdot u^2-\left(\dfrac{1}{4(d-1)}+\dfrac{1}{12}\right)\cdot u^4\right) \text{d}u}\le D(y_2-y_1),$$
therefore there exist $-C=y_0<y_1<\cdots<y_s=C$ such that for any $i\in\{0,1,\dots,s-1\}$,
$$\PR\left(\mu^{(X_\ell^{(R)})}([y_i,y_{i+1}])>\varepsilon\ |\ X_\ell^{(R)}\in[-\varepsilon^{-1},\varepsilon^{-1}]\right)=0.$$
Choose $\eta_1=\sqrt{\frac{d-1}{d}}\cdot\varepsilon,\eta_2,\eta>0$ such that: 
\begin{align*}
\PR(\exp(\tilde{W}_k)\le\eta_1)\le\varepsilon,\ \ \eta_2\le\dfrac{\tilde{c}_0}{4}\sqrt{\dfrac{d-1}{d}}\varepsilon^2\exp\left(-\dfrac{\varepsilon^{-1}}{\sqrt{2(d-1)}}-\dfrac{1}{4(d-1)}\right),\\  \frac{2\eta_2}{\eta_1\tilde{c}_1-\eta_2}\le\varepsilon\ \ \text{and}\ \ \frac{s\eta}{\eta_2^2}\le\varepsilon,
\end{align*}
where
$$\tilde{c}_1:=\exp\left(-\dfrac{C^2\varepsilon^{-1}}{\sqrt{2(d-1)}}-\dfrac{C^4}{4(d-1)}\right)>0.$$
This choice is possible due to an argument almost identical to that of (\ref{Wk-size}). Also, for each $i\in\{1,2,\dots,s\}$, let
$$\tilde{Z}^{(i)}=\SL{-Cn^{3/4}\le m\le y_i n^{3/4}}\tilde{Z}_m.$$
Because of Proposition \ref{Zm-ER-mom}, we can set
$$\sup\limits_n\dfrac{\sup\limits_{|m|\le Cn^{3/4}}\E(\tilde{Z}_m)}{\inf\limits_{|m|\le Cn^{3/4}}\E(\tilde{Z}_m)}=:\tilde{M}<\infty.$$
For any $i$, let $\tilde{N}_i=\#\{m:-Cn^{3/4}\le m\le y_in^{3/4}\}$. Then,
\begin{align*}
&\E\left[\left(\tilde{Z}^{(i)}-\exp\left(\tilde{W}_{k,n}^{(R)}\right)\SL{-Cn^{3/4}\le m\le y_in^{3/4}}\E(\tilde{Z}_m)\cdot\exp\left(\theta_m\widehat{X}_{\ell,n}^{(R)}-\theta_m^2/2\right)\right)^2\right]
\\=\ &\E\left[\left(\SL{-Cn^{3/4}\le m\le y_in^{3/4}}\left[\tilde{Z}_m-\exp\left(W_{k,n}^{(R)}+\theta_m\widehat{X}_{\ell,n}^{(R)}-\theta_m^2/2\right)\E(\tilde{Z}_m)\right]\right)^2\right]
\\\le\ &\tilde{N}_i^2\cdot2\tilde{\alpha}_{\ell,k,R,C}\cdot\left(\sup\limits_{-Cn^{3/4}\le m\le y_in^{3/4}}\E(\tilde{Z}_m)\right)^2\\\le\ &2\tilde{M}^2\tilde{\alpha}_{\ell,k,R,C}\cdot(\E(\tilde{Z}^{(i)}))^2.
\end{align*}
Therefore, for any $1\le i\le s$, if $k,\ell,R$ and $n$ are large enough,
\begin{equation}\label{Zi-appr-ER}
\E\left[\left(\tilde{Z}^{(i)}-\exp\left(\tilde{W}_{k,n}^{(R)}\right)\SL{-Cn^{3/4}\le m\le y_in^{3/4}}\E(\tilde{Z}_m)\cdot\exp\left(\theta_m\widehat{X}_{\ell,n}^{(R)}-\theta_m^2/2\right)\right)^2\right]\le\ \eta\cdot\E(\tilde{Z}^{(i)})^2.
\end{equation}
Consider the event
\begin{align*}
B_n^{(1)}:=&\ \left\{|\widehat{X}_{\ell,n}^{(R)}|\le \varepsilon^{-1}\right\}\cap\{\mu^{(X_\ell^{(R)})}([-C,C])>1-\varepsilon\}
\cap\left\{\exp\left(\tilde{W}_{k,n}^{(R)}\right)>\eta_1\right\}\\&\cap\bigcap\limits_{i=1}^s\left\{\dfrac{\left|\tilde{Z}^{(i)}-\exp\left(\tilde{W}_{k,n}^{(R)}\right)\SL{-Cn^{3/4}\le m\le y_in^{3/4}}\E(\tilde{Z}_m)\exp(\theta_m\widehat{X}_{\ell,n}^{(R)}-\theta_m^2/2)\right|}{\E(\tilde{Z}^{(i)})}\le\eta_2\right\}\\
&\cap\left\{\sup\limits_{a,b\in[-C,C]}\left| \dfrac{\SL{an^{3/4}\le m\le bn^{3/4}}\E(\tilde{Z}_m)\exp(\theta_m\widehat{X}_{\ell,n}^{(R)}-\theta_m^2/2)}{\tilde{c}_n\int_a^b\exp\left(\frac{y^2}{\sqrt{2(d-1)}}X_\ell^{(R)}-\left(\frac{1}{12}+\frac{1}{4(d-1)}\right)y^4\right)\ \text{d}y}-1\right|\le\frac{\varepsilon}{2} \right\},
\end{align*}
where $\tilde{c}_n=\frac{\sqrt[4]{n}}{\sqrt{2\pi}}\cdot\exp(-\frac34)$. On the event $B_n^{(1)}$,
\begin{align*}
\tilde{Z}^{(s)}&\ge\exp\left(\tilde{W}_{k,n}^{(R)}\right)\SL{|m|\le Cn^{3/4}}\left[\E(\tilde{Z}_m)\exp(\theta_m\widehat{X}_{\ell,n}^{(R)}-\theta_m^2/2)\right]
-\eta_2\cdot\E(\tilde{Z}^{(s)})
\\&\ge\eta_1\exp\left( -\dfrac{\varepsilon^{-1}}{\sqrt{2(d-1)}}-\dfrac{1}{4(d-1)}\right)\SL{|m|\le n^{3/4}}\E(\tilde{Z}_m)-\eta_2\cdot\E(\tilde{Z}^{(s)})
\\&\ge\left[\eta_1\exp\left(-\dfrac{\varepsilon^{-1}}{\sqrt{2(d-1)}}-\dfrac{1}{4(d-1)}\right)\dfrac{\tilde{c}_0}{2}-\eta_2\right]\cdot\E(\tilde{Z}^{(s)})
\\&\ge\dfrac{\tilde{c}_0}{4}\sqrt{\dfrac{d-1}{d}}\cdot\varepsilon\exp\left(-\dfrac{\varepsilon^{-1}}{\sqrt{2(d-1)}}-\dfrac{1}{4(d-1)}\right)\cdot\E(\tilde{Z}^{(s)}),
\end{align*}
due to the way $\tilde{c}_0,\eta_1$ and $\eta_2$ were defined. Therefore, on the event
$$B_n:=B_n^{(1)}\cap\left\{\SL{|m|>Cn^{3/4}}\tilde{Z}_m\le\dfrac{\tilde{c}_0}{4}\sqrt{\dfrac{d}{d-1}}\varepsilon^2\exp\left(-\dfrac{\varepsilon^{-1}}{\sqrt{2(d-1)}}-\dfrac{1}{4(d-1)}\right)\cdot\E(\tilde{Z}^{(s)})\right\}$$
we will have that $\frac{\tilde{Z}-\tilde{Z}^{(s)}}{\tilde{Z}}<\varepsilon$. We observe that on the event $B_n^{(1)}$, since $-C\le y_i\le C$,
$$\left|\dfrac{\SL{-Cn^{3/4}\le m\le y_in^{3/4}}\E(\tilde{Z}_m)\exp(\theta_m\widehat{X}_{\ell,n}^{(R)}-\theta_m^2/2)}{\SL{|m|\le Cn^{3/4}}\E(\tilde{Z}_m)\exp(\theta_m\widehat{X}_{\ell,n}^{(R)}-\theta_m^2/2)}-\dfrac{\mu^{(X_\ell^{(R)})}([-C,y_i])}{\mu^{(X_\ell^{(R)})}([-C,C])}\right|\le\varepsilon.$$
We claim that for large $n$, $\PR(B_n)\ge1-8\varepsilon$. Due to (\ref{Zi-appr-ER}) and Chebyshev's inequality,
\begin{align*}
\PR\left(\bigcup\limits_{i=1}^s\left\{\dfrac{\left|\tilde{Z}^{(i)}-\exp\left(\tilde{W}_{k,n}^{(R)}\right)\SL{-Cn^{3/4}\le m\le y_in^{3/4}}\E(\tilde{Z}_m)\exp(\theta_m\widehat{X}_{\ell,n}^{(R)}-\theta_m^2/2)\right|}{\E(\tilde{Z}^{(i)})}>\eta_2\right\}\right)&\le s\cdot\dfrac{\eta}{\eta_2^2}\\&\le\varepsilon.
\end{align*}
Also, due to the way $\eta_1$ was chosen, if $n$ is large enough,
$$\PR\left(\exp\left(\tilde{W}_{k,n}^{(R)}\right)\le\eta_1\right)\le2\varepsilon\ \ \text{and}\ \ \PR\left(|\widehat{X}_{\ell,n}^{(R)}|>\varepsilon^{-1}\right)\le2\varepsilon.$$
Moreover, for any sequence $(x_n)_{n=1}^\infty$ with $|x_n|\le\varepsilon^{-1}$ which converges to $x$, uniformly for $a,b\in[-C,C]$ it holds that
$$\SL{an^{3/4}\le m\le bn^{3/4}}\E(\tilde{Z}_m)\exp(\theta_mx_n-\theta_m^2/2)\sim\tilde{c}_{n}\int_a^b\exp\left(\tfrac{y^2}{\sqrt{2(d-1)}}x-(\tfrac{1}{4(d-1)}+\tfrac{1}{12})y^4\right)\text{d}y.$$
Applying this for the sequence $\widehat{X}_{\ell,n}^{(R)}\xrightarrow[n\to\infty]{}X_\ell^{(R)}$ we get that $\PR(B_n^{(1)})\ge1-7\varepsilon$ for large enough $n$ and therefore Lemma \ref{right-scal-ER} implies that $\PR(B_n)\ge1-8\varepsilon$.\\\\
We will prove that on the event $B_n$,
\begin{equation}\label{conv-y_i-ER}
\sup\limits_{1\le i\le s}|\mu_{n,\text{ER}}([-C,y_i])-\mu^{(X_\ell^{(R)})}([-C,y_i])|<4\varepsilon.
\end{equation}
With an argument identical to that included in the proof of Theorem \ref{main-thm-1}, one can see that this is enough to finish the proof.
Indeed,
\begin{align*}
&|\mu_{n,\text{ER}}([-C,y_i])-\mu^{(X_\ell^{(R)})}([-C,y_i])|\\=\ & \left|\dfrac{\tilde{Z}^{(i)}}{\tilde{Z}}-\mu^{(X_\ell^{(R)})}([-C,y_i])\right|
\\\le\ & \left|\dfrac{\tilde{Z}^{(i)}}{\tilde{Z}}-\dfrac{\tilde{Z}^{(i)}}{\tilde{Z}^{(s)}}\right|+\left|\mu^{(X_\ell^{(R)})}([-C,y_i])-\dfrac{\mu^{(X_\ell^{(R)})}([-C,y_i])}{\mu^{(X_\ell^{(R)})}([-C,C])}\right|\\+&\left|\dfrac{\tilde{Z}^{(i)}}{\tilde{Z}^{(s)}}-\dfrac{\SL{-Cn^{3/4}\le m\le y_in^{3/4}}\E(\tilde{Z}_m)\exp(\theta_m\widehat{X}_{\ell,n}^{(R)}-\theta_m^2/2)}{\SL{-Cn^{3/4}\le m\le Cn^{3/4}}\E(\tilde{Z}_m)\exp(\theta_m\widehat{X}_{\ell,n}^{(R)}-\theta_m^2/2)}\right|\\+&\left|\dfrac{\SL{-Cn^{3/4}\le m\le y_in^{3/4}}\E(\tilde{Z}_m)\exp(\theta_m\widehat{X}_{\ell,n}^{(R)}-\theta_m^2/2)}{\SL{-Cn^{3/4}\le m\le Cn^{3/4}}\E(\tilde{Z}_m)\exp(\theta_m\widehat{X}_{\ell,n}^{(R)}-\theta_m^2/2)}-\dfrac{\mu^{(X_\ell^{(R)})}([-C,y_i])}{\mu^{(X_\ell^{(R)})}([-C,C])} \right|.
\end{align*}
We show that on the event $B_n$, each one of these terms is $<\varepsilon$.
\begin{enumerate}
\item 
On $B_n$, $\frac{\tilde{Z}-\tilde{Z}^{(s)}}{\tilde{Z}}<\varepsilon$, therefore the first two terms are $<\varepsilon$.
\item 
As we already explained, on $B_n$, the last term is also $<\varepsilon$.
\item 
On the event $B_n$, $\tilde{Z}^{(s)}\ge(\eta_1c_1-\eta_2)\cdot\E(\tilde{Z}^{(s)})$ and 
$$\left|\dfrac{\tilde{Z}^{(i)}}{\tilde{Z}^{(s)}}-\dfrac{\SL{-Cn^{3/4}\le m\le y_in^{3/4}}\E(\tilde{Z}_m)\exp(\theta_m\widehat{X}_{\ell,n}^{(R)}-\theta_m^2/2)}{\SL{-Cn^{3/4}\le m\le Cn^{3/4}}\E(\tilde{Z}_m)\exp(\theta_m\widehat{X}_{\ell,n}^{(R)}-\theta_m^2/2)}\right|\le\dfrac{2\eta_2}{\eta_1c_1-\eta_2}\le\varepsilon.$$
\end{enumerate}
The proof is complete.
\end{proof}
It remains to prove Proposition \ref{Zm-appr-ER}.
\begin{proof}[Proof of Proposition \ref{Zm-appr-ER}]
We prove the desired statement, with
\begin{align*}
\tilde{\alpha}_{\ell,k,R,C}:=\sup\limits_{|x|\le C}\Biggl\{&\exp\left(\dfrac{x^4}{2(d-1)}+\SL{i=3}^\infty\dfrac{d^{-i}}{2i}\right)\Biggr. \\ \Biggl. &-2\E\left[\exp\left(\frac{x^2}{\sqrt{2(d-1)}} X_\ell'^{(R)}-\frac{x^4}{4(d-1)}+\tilde{W}_k'^{(R)}\right)\right]\Biggr.\\&+\E\left[\exp\left(\frac{\sqrt{2}\cdot x^2}{\sqrt{d-1}}X_\ell^{(R)}-\frac{x^4}{2(d-1)}+2\tilde{W}_k^{(R)}\right)\right]\Bigg\}.
\end{align*}
This $\tilde{\alpha}_{\ell,k,R,C}$ satisfies the desired property. Indeed, for any $x\in[-C,C]$,
\begin{align*}
&\left|\exp\left(\dfrac{x^4}{2(d-1)}+\SL{i=3}^\infty\dfrac{d^{-i}}{2i}\right)-\E\left[\exp\left(\frac{x^2}{\sqrt{2(d-1)}} X_\ell'^{(R)}-\frac{x^4}{4(d-1)}+\tilde{W}_k'^{(R)}\right)\right]\right|
\\\le&\exp\left(\dfrac{x^4}{2(d-1)}\right)\left|\exp\left(\SL{i=3}^\infty\dfrac{d^{-i}}{2i}\right)-\E\left[\exp\left(\tilde{W}_k'^{(R)}\right)\right]\right|
\\&+\E\left[\exp\left(\tilde{W}_k'^{(R)}\right)\right]\left|\exp\left(\dfrac{x^4}{2(d-1)}\right)-\E\left[\exp\left(\frac{x^2}{\sqrt{2(d-1)}} X_\ell'^{(R)}-\frac{x^4}{4(d-1)}\right)\right]\right|.
\end{align*}
As explained in the proof of Proposition \ref{Zm-appr-reg}, 
$$\lim\limits_{R\to\infty}\lim\limits_{k\to\infty}\E\left[\exp\left(\tilde{W}_k'^{(R)}\right)\right]=\exp\left(\SL{i=3}^\infty\dfrac{d^{-i}}{2i}\right).$$
Also, since
\begin{align*}
&\ \left|\exp\left(\dfrac{x^4}{2(d-1)}\right)-\E\left[\exp\left(\frac{x^2}{\sqrt{2(d-1)}} X_\ell'^{(R)}-\frac{x^4}{4(d-1)}\right)\right]\right|
\\\le&\ \exp\left(\dfrac{x^4}{2(d-1)}\right)\left|\exp\left(\dfrac{x^4}{4(d-1)}(2\gamma_\ell^{(1)}+\gamma_\ell^{(2)})\right)-1\right|\\&+\E\left|\exp\left(\dfrac{x^2}{\sqrt{2(d-1)}}X_\ell'\right)-\exp\left(\dfrac{x^2}{\sqrt{2(d-1)}}X_\ell'^{(R)}\right)\right|
\\\le&\ \exp\left(\dfrac{C^4}{2(d-1)}\right)\cdot\dfrac{C^4}{d-1}\cdot(2\gamma_\ell^{(1)}+\gamma_\ell^{(2)})+\E\left[\mathbf{1}_{|X_\ell'|>R}\left(\exp\left(\dfrac{x^2}{\sqrt{2(d-1)}}X_\ell'\right)+1\right)\right]
\end{align*}
and
\begin{align*}
\E\left[\mathbf{1}_{|X_\ell'|>R}\left(\exp\left(\dfrac{x^2}{\sqrt{2(d-1)}}X_\ell'\right)+1\right)\right]&\le\sqrt{2\PR(|X_\ell'|>R)\cdot\E\left[\exp\left(\dfrac{\sqrt{2}\cdot x^2}{\sqrt{d-1}}X_\ell'\right)+1\right]}
\\&\le\sqrt{4\cdot\PR(|X_\ell'|>R)\cdot\exp\left(\dfrac{4C^4}{d-1}\right)},
\end{align*}
which hold if $\ell$ is large enough, it follows that
\begin{align*}
\lim\limits_{R\to\infty}\lim\limits_{\ell\to\infty}\lim\limits_{k\to\infty}\sup\limits_{|x|\le  C}&\left| \exp\left(\dfrac{x^4}{2(d-1)}+\SL{i=3}^\infty\dfrac{d^{-i}}{2i}\right)\right.\\&-\left. \E\left[\exp\left(\frac{x^2}{\sqrt{2(d-1)}} X_\ell'^{(R)}-\frac{x^4}{4(d-1)}\right)\right]\E\left[\exp\left(\tilde{W}_k'^{(R)}\right)\right]\right|=0.
\end{align*}
In a similar fashion, we can prove that
\begin{align*}
\lim\limits_{R\to\infty}\lim\limits_{\ell\to\infty}\lim\limits_{k\to\infty}\sup\limits_{|x|\le  C}&\left| \exp\left(\dfrac{x^4}{2(d-1)}+\SL{i=3}^\infty\dfrac{d^{-i}}{2i}\right)\right.\\&-\left. \E\left[\exp\left(\frac{\sqrt{2}\cdot x^2}{\sqrt{d-1}} X_\ell^{(R)}-\frac{x^4}{2(d-1)}+2\tilde{W}_k^{(R)}\right)\right]\right|=0.
\end{align*}
This implies that indeed, $\lim\limits_{R\to\infty}\lim\limits_{\ell\to\infty}\lim\limits_{k\to\infty}\tilde{\alpha}_{\ell,k,R,C}=0$.\\\\ 
Suppose Proposition \ref{Zm-appr-ER} is false. Then, there exists some sequence $m=m(n)$ such that $|m|\le Cn^{3/4}$ for any $n$ and
$$\limsup\limits_{n\to\infty}\E\left[\left(\dfrac{\tilde{Z}_m}{\E(\tilde{Z}_m)}-\exp\left(\theta_m\widehat{X}_{\ell,n}^{(R)}-\theta_m^2/2+\tilde{W}_{k,n}^{(R)}\right)\right)^2\right]>\tilde{\alpha}_{\ell,k,R,C}.$$
Without loss of generality, we may assume (by extracting a subsequence and, possibly, modifying some terms) that $m\cdot n^{-3/4}\xrightarrow[n\to\infty]{}x\in[-C,C]$. Then, due to Lemma \ref{Zm-ER-mom} and Proposition \ref{cyc-path-ER},
\begin{align*}
&\E\left[\left(\dfrac{\tilde{Z}_m}{\E(\tilde{Z}_m)}-\exp\left(\theta_m\widehat{X}_{\ell,n}^{(R)}-\theta_m^2/2+\tilde{W}_{k,n}^{(R)}\right)\right)^2\right]
\\ =\ &\dfrac{\E(\tilde{Z}_m^2)}{\E(Z_m)^2}-2\cdot\E_m^*\left[\exp\left(\theta_m\widehat{X}_{\ell,n}^{(R)}-\theta_m^2/2+\tilde{W}_{k,n}^{(R)}\right)\right]+\E\left[\exp\left(2\theta_m\widehat{X}_{\ell,n}^{(R)}-\theta_m^2+2\tilde{W}_{k,n}^{(R)}\right)\right]
\\\underset{n\to\infty}{\to}&\exp\left(\frac{x^4}{2(d-1)}+\SL{i=3}^\infty\frac{d^{-i}}{2i}\right)-2\E\left[\exp\left(\frac{x^2}{\sqrt{2(d-1)}} X_\ell'^{(R)}-\frac{x^4}{4(d-1)}+\tilde{W'}_k^{(R)}\right)\right]
\\&+\E\left[\exp\left(\frac{\sqrt{2}\cdot x^2}{\sqrt{d-1}}X_\ell^{(R)}-\frac{x^4}{2(d-1)}+2\tilde{W}_k^{(R)}\right)\right].
\end{align*}
Due to the definition of $\tilde{\alpha}_{\ell,k,R,C}$ that we gave above, we have reached a contradiction and the proof is complete.
\end{proof}
\subsection{Proofs of Corollaries}\label{3.3}
In this subsection, we prove Corollaries \ref{Glaub-lower} and \ref{free-evergy-ER}.
\begin{proof}[Proof of Corollary \ref{Glaub-lower}]
As is well-known, the spectral gap of a Markov chain $(X_t)_{t\ge0}$, denoted by $\text{gap}(X_t)$, is defined to be
$$\text{gap}(X_t):=\inf\limits_{f\neq c}\dfrac{\mathcal{E}(f,f)}{\text{Var}_\pi(f)},$$
where the $\inf$ is over all non-constant functions $f:\mathcal{X}\to\R$ and $\mathcal{E}(f,f)$ is the Dirichlet form of $f$, i.e.
$$\mathcal{E}(f,f):=\dfrac{1}{2}\SL{x,y\in\mathcal{X}}\pi(x)q(x,y)(f(x)-f(y))^2.$$
We set $f(\sigma)=m(\sigma)=\SL{v}\sigma_v$. Then,
$$\mathcal{E}(m,m)\le2\cdot\SL{\sigma,\tau}\pi(\sigma)q(\sigma,\tau)\le2n.$$
In the regular graph case, due to Theorem \ref{main-thm-1},
$$\mu_{n,\text{reg}}(\R\setminus(-1,1))\xrightarrow[n\to\infty]{\PR}\mu_{\text{reg}}(\R\setminus(-1,1))>0.$$
If $F_n=\{G:\PR(|m|\ge n^{3/4}|G)>\mu_{\text{reg}}(\R\setminus(-1,1))/2\}$, then $\PR_{G\sim G_{n,d}}(F_n^c)\xrightarrow[n\to\infty]{}0$ and on the event $F_n$,
$$\text{Var}(m)\ge n^{3/2}\cdot\PR(|m|\ge n^{3/4})\ge\frac12\mu_{\text{reg}}(\R\setminus(-1,1))\cdot n^{3/2}.$$
This implies that
$$\text{gap}(X_t)\le\dfrac{\mathcal{E}(m,m)}{\text{Var}(m)}\le cn^{-1/2}.$$
Due to the well-known connection between the spectral gap and the mixing time, it follows that
$$t_{\text{mix}}(1/4)\ge c_1\cdot\text{gap}^{-1}(X_t)\ge c_0\sqrt{n}$$
and the first statement follows.\\\\
As for the Erd\"os-R\'enyi case, let $\varepsilon>0$. Let $K>0$ be such that $\PR(|X|\le K)\ge1-\frac{\varepsilon}{2}$. Also, for some constant $c(K)$, $\mu^{(x)}(\R\setminus(-1,1))\ge c(K)$ for any $|x|\le K$. Finally, if $n$ is large enough, there exists a coupling of $\mu_n$ and $\mu$ such that $\E(\text{d}_{\text{KS}}(\mu_n,\mu))\le\frac{\varepsilon c(K)}{8}$. Then,
$$\PR(\text{d}_{\text{KS}}(\mu_n,\mu)>c(K)/4)\le\varepsilon/2\ \ \Rightarrow\ \ \PR\left[|\mu_n(\R\setminus(-1,1))-\mu(\R\setminus(-1,1))|>c(K)/2\right]\le\varepsilon/2.$$
Therefore, with probability $\ge1-\varepsilon$, we know that $\mu_n(\R\setminus(-1,1))\ge c(K)/2$, and, with an argument similar to the one for the regular case, on this event it is true that
$$\text{Var}(m)\ge\delta_1n^{3/2},$$
which then implies the second statement of the Corollary.
\end{proof}
We move on to Corollary \ref{free-evergy-ER}.
\begin{proof}[Proof of Corollary \ref{free-evergy-ER}]
We revisit the proof of Theorem \ref{main-thm-2}. Let $E_n$ be the number of edges of the graph. Suppose all the relevant random variables are in the same probability space. Observe that $Z_n=2^n\cosh(\beta)^{|E_n|}\cdot\tilde{Z}$ and if we set
\begin{align*}
\Delta_n':=\Delta_n-&\SL{i=3}^\infty\left(\tilde{C}_i\log(1+d^{-i})-\frac{d^i+1}{2i}\right)\\-&\log\int_\R\exp\left(\frac{y^2}{\sqrt{2(d-1)}}X-\left(\dfrac{1}{12}+\dfrac{1}{4(d-1)}\right)y^4\right)\ \text{d}y,
\end{align*}
then
\begin{align*}
\{|\Delta_n'|\ge6\varepsilon\}\subseteq&\ \{|\log\tilde{Z}-\log\tilde{Z}^{(s)}|\ge\varepsilon\}
\\\cup &\ \left\{\left|\log\tilde{Z}^{(s)}-\tilde{W}_{k,n}^{(R)}-\log\SL{|m|\le Cn^{3/4}}\E(\tilde{Z}_m)\exp(\theta_m\widehat{X}_{\ell,n}^{(R)}-\theta_m^2/2)\right|\ge\varepsilon\right\}
\\\cup &\ \left\{|\tilde{W}_{k,n}^{(R)}-\tilde{W}_k^{(R)}|\ge\varepsilon\right\}\ \cup\left\{|\tilde{W}_k^{(R)}-\tilde{W}_\infty|\ge\varepsilon\right\}
\\\cup &\ \left\{\left|\log\SL{|m|\le Cn^{3/4}}\E(\tilde{Z}_m)\exp(\theta_m\widehat{X}_{\ell,n}^{(R)}-\theta_m^2/2)-\log\tilde{c}_n\right.\right.\\&\left.\left.-\log\int_{-C}^C\exp\left(\frac{y^2}{\sqrt{2(d-1)}}X_\ell^{(R)}-\left(\frac{1}{12}+\frac{1}{4(d-1)}\right)y^4\right)\ \text{d}y\right|\ge\varepsilon\right\}
\\\cup &\ \left\{\left|\log\mu^{(X_\ell^{(R)})}([-C,C])\right|\ge\varepsilon\right\}
\\\cup &\ \{X_\ell^{(R)}\neq X\}.
\end{align*}
This means that
$$\{|\Delta_n'|\ge6\varepsilon\}\subseteq B_n\cup\left\{|\tilde{W}_{k,n}^{(R)}-\tilde{W}_k^{(R)}|\ge\varepsilon\right\}\ \cup\left\{|\tilde{W}_k^{(R)}-\tilde{W}_\infty|\ge\varepsilon\right\}\cup\{X_\ell^{(R)}\neq X\},$$
therefore, as we have already explained, if $k,R$ and $n$ are large enough,
$$\PR(|\Delta_n'|\ge6\varepsilon)\le11\varepsilon.$$
This shows that indeed, $\Delta_n'\xrightarrow[n\to\infty]{\PR}0$.
\end{proof}
\section{Moment calculations}\label{4}
Before explaining the calculations, we state and prove the following technical lemma, which will be useful throughout this section.
\begin{lem}\label{comp-fn-Taylor}
Let $D\subseteq\R^d$ be a compact set and $f_1,f_2:D\to\R$ two continuous functions with the following properties:
\begin{itemize}
\item 
$f_1$ has a unique global maximum at $x\in D$.
\item 
$f_2$ has a unique global minimum at $x$, with $f_2(x)=0$.
\item 
$c_0:=\frac12\liminf\limits_{\substack{y\to x\\ y\in D}}\frac{f_1(x)-f_1(y)}{f_2(y)}>0$.
\end{itemize}
Then, for some $c>0$, $f_1(y)\le f_1(x)-cf_2(y)$ for any $y\in D$.
\end{lem}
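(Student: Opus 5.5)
We split $D$ into a small neighbourhood of $x$ and its complement, and handle the two pieces separately. The claimed inequality $f_1(y)\le f_1(x)-cf_2(y)$ is trivial at $y=x$, since both sides equal $f_1(x)$. So it suffices to find a positive constant that works near $x$ and another that works away from $x$, and then take $c$ to be the smaller of the two.

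\emph{Near $x$.} By definition of the $\liminf$, and since $c_0>0$, there is $r>0$ such that
\[
\frac{f_1(x)-f_1(y)}{f_2(y)}\ge c_0
\]
for all $y\in D$ with $0<|y-x|<r$. Here we use that $f_2(y)>0$ for $y\neq x$, because $x$ is the unique minimum of $f_2$ and $f_2(x)=0$; in particular the quotient in the hypothesis makes sense. (If $c_0=+\infty$, replace it by any finite positive number, say $1$.) Multiplying through by $f_2(y)>0$ gives $f_1(y)\le f_1(x)-c_0f_2(y)$ on $B(x,r)\cap D$.

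\emph{Away from $x$.} Let $K:=D\setminus B(x,r)$. This set is compact, being a closed subset of the compact set $D$. If $K$ is empty there is nothing to prove, so assume it is not.
\begin{itemize}
\item The function $f_1(x)-f_1$ is continuous and strictly positive on $K$, because $x$ is the unique maximizer of $f_1$. By compactness it attains a minimum $a>0$ on $K$.
\item The function $f_2$ is continuous on the compact set $D$, so it attains a maximum $b<\infty$ there. We may assume $b>0$, since otherwise $f_2\equiv 0$ and the claim is trivial.
\end{itemize}
Then for every $y\in K$,
\[
f_1(x)-f_1(y)\ge a\ge \frac{a}{b}\,f_2(y).
\]

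Taking $c=\min(c_0,a/b)>0$ gives $f_1(y)\le f_1(x)-cf_2(y)$ on all of $D$. The only step needing any care is extracting the uniform neighbourhood bound from the $\liminf$, together with the remark that $f_2>0$ off $x$. The factor $\tfrac12$ in the definition of $c_0$ conveniently makes the neighbourhood inequality strict, but it is not essential.
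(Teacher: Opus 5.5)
Your proof is correct and follows the same route as the paper. The paper also takes a neighbourhood bound from the $\liminf$ and a compactness bound away from $x$, using a single $\varepsilon$ with $f_1\le f_1(x)-\varepsilon$ and $f_2\le\varepsilon^{-1}$ to get $c=\min(c_0,\varepsilon^2)$, which is your $\min(c_0,a/b)$; your remarks on $f_2>0$ off $x$ and on the degenerate cases are harmless additions.
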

\begin{proof}
Due to the third property, there exists some $\delta>0$ such that
$$\dfrac{f_1(x)-f_1(y)}{f_2(y)}\ge c_0\ \Leftrightarrow f_1(y)\le f_1(x)-c_0f_2(y),$$
for all $y$ such that $|y-x|\le\delta$. On the other hand, because of the first two properties, the continuity of the functions and the compactness of $D$, there exists some $\varepsilon>0$ such that $f_1(y)\le f_1(x)-\varepsilon$ and $f_2(y)\le\varepsilon^{-1}$ for any $y$ such that $|y-x|\ge\delta$. Therefore, for any such $y$,
$$f_1(y)\le f_1(x)-\varepsilon\le f_1(x)-\varepsilon^2f_2(y).$$
The Lemma follows, if we set $c=\min(c_0,\varepsilon^2)$.
\end{proof}
\begin{rem}
We will use this Lemma in situations in which $f_1$ is $C^\infty$ around its maximizer and $f_2$ is part of the Taylor expansion of $f_1$ around the maximizer.
\end{rem}
\subsection{Regular Graphs}
The goal of this subsection is to prove Proposition \ref{Zm-reg-mom}. For the second moment calculation, we will need the following Proposition, regarding the maximizer of $\Phi$, when the vertex-empirical distribution $\bar{h}$ is fixed.
\begin{prop}\label{max-given-vtx-emp}
Let $\mu$ be a spin system on a $G\sim G_{n,d}$, with a finite spin set $\mathcal{X}$ and weights $(\overline{\psi},\psi)$. Assume that $\psi>0$. For a given vertex-empirical distribution $\bar{h}$ such that $\bar{h}(x)>0$ for any $x\in\mathcal{X}$, let 
$$B_{\bar{h}}=\left\{h\in[0,1]^{\R^{\mathcal{X}^2}}:h(x,y)=h(y,x)\ \forall x,y\ \ \text{and}\ \ \SL{y\in\mathcal{X}}h(x,y)=\bar{h}(x)\ \forall x\right\}$$ 
and $\Phi_{\bar{h}}:B_{\bar{h}}\to\R$ be the function
$$\Phi_{\bar{h}}(h):=H(h)+\langle h,\log\psi\rangle.$$
Then, the maximizer of $\Phi_{\bar{h}}$ is an edge-empirical distribution $h^*$ of the form
\begin{equation}\label{h*-def}
h^*(x,y)\varpropto q(x)q(y)\psi(x,y),
\end{equation}
for some $q:\mathcal{X}\to(0,\infty)$ such that $h^*\in B_{\overline{h}}$. 
\end{prop}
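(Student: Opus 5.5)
The plan is to treat this as a strictly concave maximization over the polytope $B_{\bar h}$ and derive the form of $h^*$ from Lagrange multipliers. The objective $\Phi_{\bar h}(h)=H(h)+\langle h,\log\psi\rangle$ is the sum of the strictly concave entropy and a linear functional, so it is strictly concave on the convex compact set $B_{\bar h}$. Hence a maximizer exists by continuity and compactness, and it is unique. Two preliminary remarks: since $\psi>0$, $\log\psi$ is finite, and the symmetry constraint together with the row-sum constraints automatically gives $\sum_{x,y}h(x,y)=1$.

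\textbf{Step 1: the maximizer lies in the relative interior.} I will first show $h^*(x,y)>0$ for all $x,y$, which I expect to be the main obstacle. The set $B_{\bar h}$ contains the strictly positive point $h_0(x,y)=\bar h(x)\bar h(y)$, because $\bar h(x)>0$ for all $x$. Suppose some entry satisfies $h^*(x_0,y_0)=0$. Consider $h_t=(1-t)h^*+th_0$, which stays in $B_{\bar h}$. The derivative of the entropy term at $t=0^+$ contains $-h_0(x_0,y_0)\log h_t(x_0,y_0)\to+\infty$, while every other contribution stays bounded. The contributions from positive entries of $h^*$ are finite, and the remaining zero entries also contribute $+\infty$, since $-(\log h_t)'h_t\ge0$ there. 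So $\Phi_{\bar h}(h_t)>\Phi_{\bar h}(h^*)$ for small $t$, which contradicts maximality. Therefore all entries of $h^*$ are positive and $\Phi_{\bar h}$ is smooth near $h^*$ within the affine space cut out by the constraints.

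\textbf{Step 2: Lagrange conditions.} I parametrize by the symmetric variables $h(x,y)=h(y,x)$, or equivalently restrict to the symmetric subspace, and impose the linear constraints $\sum_y h(x,y)=\bar h(x)$ with multipliers $\lambda_x$. Since the point is interior, stationarity holds: for every ordered pair, $-\log h(x,y)-1+\log\psi(x,y)=\lambda_x+\lambda_y$. Here I use that the gradient in the symmetric direction $e_{xy}+e_{yx}$ gives twice this expression, and that the constraint gradient in that direction is $\lambda_x+\lambda_y$, each counted appropriately. Working in the symmetric subspace makes the multiplier for the pair exactly symmetric in $x,y$. Provided $\psi$ is symmetric, which is implicit since it is an edge weight (otherwise replace $\psi$ by its symmetrization, which leaves $\langle h,\log\psi\rangle$ unchanged on symmetric $h$ up to using the geometric mean), this yields $h^*(x,y)=e^{-1}e^{-\lambda_x}e^{-\lambda_y}\psi(x,y)$.

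\textbf{Step 3: conclude.} Setting $q(x)=e^{-\lambda_x-1/2}>0$ gives $h^*(x,y)=q(x)q(y)\psi(x,y)$, which is proportional to the required form (the proportionality constant can be absorbed into $q$). By construction, $h^*\in B_{\bar h}$. Finally, by strict concavity this stationary point is the unique maximizer, which proves \eqref{h*-def}.
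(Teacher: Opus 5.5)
Your proof is correct, but it takes a different route from the paper's. The paper argues constructively. It first obtains a positive $q$ with $q(x)\sum_y\psi(x,y)q(y)\propto\bar h(x)$ by applying Brouwer's fixed point theorem to $F(q)(x)\propto\bar h(x)/\sum_y\psi(x,y)q(y)$ on $\mathcal{M}_1(\mathcal{X})$, so that $h^*\propto q(x)q(y)\psi(x,y)$ lies in $B_{\bar h}$. It then normalizes $\psi$ and writes $\Phi_{\bar h}(h)=-D_{\text{KL}}(h\|\psi)$. Finally it checks that $D_{\text{KL}}(h\|\psi)-D_{\text{KL}}(h\|h^*)=-\log z+2\sum_x\bar h(x)\log q(x)$ is constant on $B_{\bar h}$. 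The resulting Pythagorean identity $D_{\text{KL}}(h\|\psi)=D_{\text{KL}}(h\|h^*)+D_{\text{KL}}(h^*\|\psi)$ gives optimality and uniqueness in one stroke, with no boundary analysis and no multipliers.

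You go the other way. Compactness and strict concavity give a unique maximizer. The infinite slope of $-u\log u$ at $u=0$ along the segment toward $\bar h\otimes\bar h$ forces it into the relative interior. The first-order conditions then produce the product form. What this buys you is that the existence of the symmetric scaling vector $q$, for which the paper needs a fixed-point theorem, falls out of the variational problem for free. What the paper's identity buys is a direct certificate that any element of $B_{\bar h}$ of the stated form is the maximizer, together with the exact gap $\Phi_{\bar h}(h^*)-\Phi_{\bar h}(h)=D_{\text{KL}}(h\|h^*)$. Your closing concavity remark recovers the first of these.

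Two small repairs are needed. First, for a zero entry of $h^*$ the derivative is explicitly $-h_0(x,y)\bigl(\log(t\,h_0(x,y))+1\bigr)\to+\infty$. That is the clean reason these terms do not cancel; your stated inequality $-(\log h_t)'h_t\ge 0$ actually evaluates to $-h_0(x,y)\le 0$. Second, in the symmetric parametrization the off-diagonal condition is $2\bigl(-\log h(x,y)-1+\log\psi(x,y)\bigr)=\lambda_x+\lambda_y$, while the diagonal one is $-\log h(x,x)-1+\log\psi(x,x)=\lambda_x$. So the uniform equation you display holds only after replacing $\lambda$ by $\lambda/2$, which merely rescales the multipliers.

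Your flag that $\psi$ must be symmetric is apt. The paper uses this silently when it asserts $h^*\in B_{\bar h}$.
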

\begin{proof}
Without loss of generality, we may assume that $\SL{x,y\in\mathcal{X}}\psi(x,y)=1$. At first, we prove that a $q$ such that the $h^*$ defined above is in $B_{\overline{h}}$ exists. For that purpose, we use Brouwer's fixed point Theorem. Without loss of generality, we will look for $q\in\mathcal{M}_1(\mathcal{X})$. Define the function $F:\mathcal{M}_1(\mathcal{X})\to\mathcal{M}_1(\mathcal{X})$ to be
$$F(q)(x)=\dfrac{\frac{\overline{h}(x)}{\SL{y\in\mathcal{X}}\psi(x,y)q(y)}}{\SL{z\in\mathcal{X}}\frac{\overline{h}(z)}{\SL{y\in\mathcal{X}}\psi(z,y)q(y)}}.$$
Then, because $\mathcal{X}$ is finite and $\psi>0$, the denominators stay uniformly away from 0 and, therefore, $F$ is continuous. So, due to Brouwer's fixed point Theorem, $F$ has a fixed point $q$, which satisfies the conditions needed.\\\\
Let $q$ satisfy the conditions discussed above and write $h^*(x,y)=q(x)q(y)\psi(x,y)/z$ for some $z>0$. Observe that
$$H(h)+\langle h,\log\psi\rangle=-D_{\text{KL}}(h||\psi).$$
Therefore, for any $h\in B_{\overline{h}}$,
\begin{align*}
D_{\text{KL}}(h||\psi)-D_{\text{KL}}(h||h^*)&=\SL{x,y\in\mathcal{X}}h(x,y)\log\dfrac{h^*(x,y)}{\psi(x,y)}
\\&=-\log(z)+\SL{x,y\in\mathcal{X}}h(x,y)(\log q(x)+\log q(y))
\\&=-\log(z)+2\cdot\SL{x\in\mathcal{X}}\overline{h}(x)\log q(x).
\end{align*}
Observe that this expression does not depend on $h\in B_{\bar{h}}$, so it turns out that
$$D_{\text{KL}}(h||\psi)=D_{\text{KL}}(h||h^*)+D_{\text{KL}}(h^*||\psi)\ge D_{\text{KL}}(h^*||\psi),$$
proving that indeed, $h^*$ is the maximizer of $\Phi$. Also, since $D_{\text{KL}}(\mu||\nu)=0\Leftrightarrow\mu=\nu$, this maximizer is unique.
\end{proof}
\subsubsection{First moment calculation}
We perform the calculations proving Lemma \ref{right-scal-reg} and the first statement of Lemma \ref{Zm-reg-mom}. To that end, we use (\ref{h-contr-reg}) for the Ising model. At the critical temperature, we are still in the uniqueness regime, so the unique maximizer of $\Phi_1$ is at $h_*(x,x')\varpropto\exp(\beta xx')$. This means that $h_*(1,1)=h_*(-1,-1)=\frac{d}{4(d-1)}$ and $h_*(1,-1)=h_*(-1,1)=\frac{d-2}{4(d-1)}$. Set $$m(h)=n(h(1,1)-h(-1,-1))\ \ \text{and}\ \ s(h)=n(h(1,1)+h(-1,-1)).$$ For brevity purposes, whenever $h$ is implied, we will just use $s$ instead of $s(h)$. Let $\Phi_1$ be the function $\Phi$ corresponding to the Ising model and
$$\mathcal{H}_{n,m}=\{h\in\mathcal{H}_n:m(h)=m\},\ \mathcal{H}_{n,m}^{(1)}=\left\{h\in\mathcal{H}_{n,m}: \left|s(h)-\frac{d}{2(d-1)}\cdot n\right|\le n^{3/5}\right\}.$$
Around $h_*$, for $h\in\mathcal{H}_{n,m}$,
\begin{align*}
\Phi_1(h)=\Phi_1(h_*)&-\dfrac{(d-1)^2}{d-2}\cdot\left(\dfrac{s}{n}-\dfrac{d}{2(d-1)}\right)^2-\dfrac{(d-1)(d-2)(3d-2)}{12d^2}\cdot\left(\dfrac{m}{n}\right)^4\\&+\dfrac{(d-1)^2}{d}\cdot\left(\dfrac{s}{n}-\dfrac{d}{2(d-1)}\right)\cdot\left(\dfrac{m}{n}\right)^2+O\left(\left|\dfrac{s}{n}-\dfrac{d}{2(d-1)}\right|^3+\left(\dfrac{m}{n}\right)^6\right).
\end{align*}
Due to Lemma \ref{comp-fn-Taylor}, there exists some constant $c>0$ such that 
\begin{equation}\label{decay-Phi1}
\Phi_1(h)\le\Phi_1(h_*)-c\left(\left(\dfrac{s}{n}-\dfrac{d}{2(d-1)}\right)^2+\left(\dfrac{m}{n}\right)^4\right)
\end{equation}
for every $h$.
\begin{proof}[Proof of Lemma \ref{right-scal-reg}]
Let $\mathcal{H}_n^{(1)}$ be the set of $h$ such that $(\frac{s}{n}-\frac{d}{2(d-1)})^2+(\frac{m}{n})^4\ge n^{-4/5}$. For every such $h$ we use the bound in (\ref{decay-Phi1}) to show that
$$\E(Z_h)\le\exp(-cn^{1/5})\cdot n^{\Theta(1)}\exp(n\Phi_1(h_*)),$$
which implies that
\begin{equation}\label{contr-H1}
\SL{h\in\mathcal{H}_n^{(1)}}\E(Z_h)\le\exp(-cn^{1/5})\cdot n^{\Theta(1)}\exp(n\Phi_1(h_*)).
\end{equation}
Also, let $\mathcal{H}_n^{(2)}$ be the set of $h$ such that $n^{-4/5}>(\frac{s}{n}-\frac{d}{2(d-1)})^2+(\frac{m}{n})^4\ge\frac{C^4}{n}$ for some (large) constant $C>0$. For every $h\in\mathcal{H}_n^{(2)}$, due to Stirling's formula,
$$\E(Z_h)\asymp n^{-1}\exp(n\Phi_1(h)),$$
therefore
\begin{align}\label{contr-H2}
\nonumber\SL{h\in\mathcal{H}_n^{(2)}}\E(Z_h)&\lesssim n^{-1}\SL{h\in\mathcal{H}_n^{(2)}}\exp(n\Phi_1(h))
\\\nonumber&\le n^{-1}\exp(n\Phi_1(h_*))\SL{h\in\mathcal{H}_n^{(2)}}\exp\left[
-cn\left(\left(\dfrac{s}{n}-\dfrac{d}{2(d-1)}\right)^2+\left(\dfrac{m}{n}\right)^4\right)\right]
\\\nonumber&\le n^{-1}\exp(n\Phi_1(h_*))\SL{k\ge C^4}\exp(-ck)\cdot|\{h:\tfrac{k}{n}\le(\tfrac{s}{n}-\tfrac{d}{2(d-1)})^2+(\tfrac{m}{n})^4<\tfrac{k+1}{n}\}|
\\&\lesssim n^{1/4}\exp(n\Phi_1(h_*))\SL{k\ge C^4}k^{3/4}\exp(-ck),
\end{align}
as $|\{h:\frac{k}{n}\le(\tfrac{s}{n}-\frac{d}{2(d-1)})^2+(\frac{m}{n})^4<\frac{k+1}{n}\}|=O(n^{5/4}\cdot k^{3/4})$ for any $k\in\N$.\\\\
Also, if $\mathcal{H}_n^{(3)}=\{h:(\tfrac{s}{n}-\frac{d}{2(d-1)})^2+(\frac{m}{n})^4\le\frac{1}{n}\}$, then $\Phi_1(h)\ge\Phi_1(h_*)-\frac{c'}{n}$ for any $h\in\mathcal{H}_n^{(3)}$, therefore
$$\SL{|m|\le Cn^{3/4}}\E(Z_m)\ge\SL{h\in\mathcal{H}_n^{(3)}}\E(Z_h)\gtrsim n^{-1}\cdot\left|\mathcal{H}_n^{(3)}\right|\cdot\exp(n\Phi_1(h_*))\gtrsim n^{1/4}\cdot\exp(n\Phi_1(h_*)).$$
This, together with (\ref{contr-H1}), (\ref{contr-H2}) and Markov's inequality, implies the lemma.
\end{proof}
\begin{proof}[Proof of first statement of Lemma \ref{Zm-reg-mom}]
For $h\in\mathcal{H}_{n,m}\setminus\mathcal{H}_{n,m}^{(1)}$, $\Phi_1(h)\le\Phi(h_*)-cn^{-4/5}$, which implies
\begin{equation}\label{contr-h-away}
\SL{h\in\mathcal{H}_{n,m}\setminus\mathcal{H}_{n,m}^{(1)}}\E(Z_h)\le\exp(-cn^{1/5})\cdot n^{\Theta(1)}\exp(n\Phi_1(h_*)).
\end{equation}
On the other hand, for $h\in\mathcal{H}_{n,m}^{(1)}$, due to Stirling's formula,
$$\E(Z_h)\sim\dfrac{2^{5/2}(d-1)^{3/2}}{\pi nd^{3/2}(d-2)^{1/2}}\cdot\exp(n\Phi_1(h)).$$
Therefore:
\begin{align*}
\SL{h\in\mathcal{H}_{n,m}^{(1)}}\E(Z_h)&\sim\dfrac{2^{5/2}(d-1)^{3/2}}{\pi nd^{3/2}(d-2)^{1/2}}\cdot\SL{h\in\mathcal{H}_{n,m}^{(1)}}\exp(n\Phi_1(h))\\&\sim\dfrac{2^{1/2}(d-1)^{3/2}}{\pi d^{1/2}(d-2)^{1/2}}\cdot\int_\R\exp(n\Phi_1(h))\ \text{d}h\\&\sim\dfrac{2^{1/2}\sqrt{d-1}}{\sqrt{n}\sqrt{\pi d}}\cdot\exp(n\Phi_1(h_*))\cdot\exp\left(-\dfrac{(d-1)(d-2)}{12d^2}\cdot\dfrac{m^4}{n^3}\right),
\end{align*}
where in the last step we used Laplace's method. Since it is easy to check that
$$\Phi_1(h_*)=\log2+\dfrac{d}{2}\log(\cosh(\beta)),$$
this finishes the first moment calculation.
\end{proof}
\subsubsection{Second moment calculation}
\begin{proof}[Proof of the second statement of Lemma \ref{Zm-reg-mom}] We need to show that
\begin{equation}\label{sec-mom-Zm-reg}
\E(Z_m^2)\sim\dfrac{2(d-1)^{3/2}}{n\pi d\sqrt{d-2}}\cdot(4\cdot\cosh(\beta)^d)^n\cdot\exp\left(-\dfrac{(d-1)(d-2)}{6d^2}\cdot\dfrac{m^4}{n^3}\right).
\end{equation}
Observe that on a graph $G$,
\begin{equation}\label{sec-mom-expan-reg}
Z_m^2=\SL{\sigma,\tau:m(\sigma)=m(\tau)=m}\exp\left(\beta\SL{u\sim v}(\sigma_u\sigma_v+\tau_u\tau_v)\right).
\end{equation}
Consider the two-copy model on $G$, i.e. the spin system on $\mathcal{X}=\{-1,1\}^2$ with $$\psi((x,x'),(y,y'))=\exp(\beta(xy+x'y')).$$ From now on, for simplicity, we will just write $(x,x',y,y')$ instead of $((x,x'),(y,y'))$, when appropriate. Then, due to (\ref{sec-mom-expan-reg}), $Z_m^2$ is the contribution to the partition function of all the edge-empirical distributions $h$ for which both magnetizations are $m$, i.e. those satisfying the equations
\begin{align}\label{relevant-barh}
\nonumber\bar{h}(1,1)+\bar{h}(1,-1)-\bar{h}(-1,1)-\bar{h}(-1,-1)&=\bar{h}(1,1)+\bar{h}(-1,1)-\bar{h}(1,-1)-\bar{h}(-1,-1)=\frac{m}{n}\\
\Leftrightarrow\ \bar{h}(1,1)-\bar{h}(-1,-1)=\frac{m}{n}\ \ &\text{and}\ \ \bar{h}(1,-1)=\bar{h}(-1,1).
\end{align}
It is important to note that the unique maximizer $h_*$ of the function $\Phi_2$ that arises in this case is again the one for which $h(x,x',y,y')\varpropto\exp(\beta(xy+x'y'))$, i.e.
\begin{align*}
h_*(1,1,1,1)=h_*(1,-1,1,-1)=h_*(-1,1,-1,1)=h_*(-1,-1,-1,-1)&=\dfrac{d^2}{16(d-1)^2}
\\ h_*(1,1,1,-1)=h_*(1,1,-1,1)=h_*(-1,-1,1,-1)=h_*(-1,-1,-1,1)&=\dfrac{d(d-2)}{16(d-1)^2}
\\ h_*(1,1,-1,-1)=h_*(1,-1,-1,1)&=\dfrac{(d-2)^2}{16(d-1)^2}.
\end{align*}
It is also important to note that the corresponding vertex-empirical distribution is the uniform one, i.e. $\bar{h}_*(1,1)=\bar{h}_*(1,-1)=\bar{h}_*(-1,1)=\bar{h}_*(-1,-1)=1/4$.
We call $\bar{\mathcal{H}}_{n,m^{\otimes2}}$ the set of vertex-empirical distributions that satisfy the equalities (\ref{relevant-barh}) and $$\bar{\mathcal{H}}_{n,m^{\otimes2}}^{(1)}=\left\{\bar{h}\in\bar{\mathcal{H}}_{n,m^{\otimes2}}:\left|t(\bar{h})-\dfrac{1}{2}\right|\le n^{-2/5}\right\},$$ 
where for a vertex-empirical distribution $\bar{h}$ we set $t(\bar{h})=\bar{h}(1,1)+\bar{h}(-1,-1)$. Then,
\begin{align*}
\E(Z_m^2)&=\SL{\bar{h}\in\bar{\mathcal{H}}_{n,m^{\otimes2}}}\SL{h\in B_{\bar{h}}}\E(Z_h)
\end{align*}
where $B_{\bar{h}}$ was defined in Proposition \ref{max-given-vtx-emp}. For any $\bar{h}\in\bar{\mathcal{H}}_{n,m^{\otimes2}}$, we denote by $\bar{h}^*$ the maximizer of the function $\Phi_{\bar{h}}$, as explained in Proposition \ref{max-given-vtx-emp}. Using Proposition \ref{max-given-vtx-emp}, a calculation of the Taylor expansion of $\Phi_2(\bar{h}^*)$ around $h_*$, verified using Mathematica yields:
\begin{align}\label{Phi2}
\Phi_2(\overline{h}^*)=\Phi_2(h_*)&-\dfrac{2(d-1)(d-2)}{d^2-2d+2}\cdot\left(t-\dfrac{1}{2}\right)^2+\dfrac{2(d-1)(d-2)}{d^2-2d+2}\cdot\left(t-\dfrac{1}{2}\right)\cdot\left(\frac{m}{n}\right)^2\nonumber\\&-\dfrac{(d-2)(d-1)(2d^2-d+1)}{3d^2(d^2-2d+2)}\cdot\left(\frac{m}{n}\right)^4+O\left(\left|t-\dfrac{1}{2}\right|^3+\left(\dfrac{m}{n}\right)^6\right).
\end{align}
For any $\bar{h}\in\bar{\mathcal{H}}_{n,m^{\otimes2}}\setminus\bar{\mathcal{H}}_{n,m^{\otimes2}}^{(1)}$, we can see that because of Lemma \ref{comp-fn-Taylor},
\begin{align*}
\SL{h\in B_{\bar{h}}}\exp(n\Phi_2(h))&\le|B_{\bar{h}}|\cdot\exp(n\Phi_2(\bar{h}^*))\le n^{\Theta(1)}\cdot\exp(n\Phi_2(\bar{h}^*))\\&\le n^{\Theta(1)}\cdot\exp(n\Phi_2(h_*))\cdot\exp(-cn^{1/5}),
\end{align*}
therefore
$$\SL{\bar{h}\in\bar{\mathcal{H}}_{n,m^{\otimes2}}\setminus\bar{\mathcal{H}}_{n,m^{\otimes2}}^{(1)}}\SL{h\in B_{\bar{h}}}\E(Z_h)\le n^{\Theta(1)}\cdot\exp(n\Phi_2(h_*))\cdot\exp(-cn^{1/5}).$$
On the other hand, for every $\bar{h}\in\bar{\mathcal{H}}_{n,m^{\otimes2}}^{(1)}$, $\bar{h}$ is within an $O(n^{-1/4})$ distance from $\bar{h}_*$. As a result, due to the differentiability of $\Phi_2$ the Hessians $\mathbf{H}_{\bar{h}}$ and $\mathbf{H}_{\bar{h}_*}$ of the functions $\Phi_{\bar{h}}$ and $\Phi_{\bar{h}_*}$ also satisfy $\lVert\mathbf{H}_{\bar{h}}-\mathbf{H}_{\bar{h}_*}\rVert_{\text{OP}}=O(n^{-1/4})$. A direct computation of the Hessian $\mathbf{H}_{\bar{h}_*}$, verified by Mathematica yields:
\begin{equation}\label{det-Hessian}
\det(-\mathbf{H}_{\bar{h}_*})=\dfrac{2^{29}\cdot(d-1)^{16}\cdot(d^2-2d+2)}{(d-2)^8\cdot d^4}>0,
\end{equation}
which means that on a region of $\bar{h}_*$, $\mathbf{H}_{\bar{h}}$ is negative-definite and bounded away from a singular matrix. In other words, keeping in mind Lemma \ref{comp-fn-Taylor} as well, there exists a constant $c>0$ for which 
$$\Phi_{\bar{h}}(h)\le\Phi_2(\bar{h}^*)-c\lVert h-\bar{h}^*\rVert_2^2.$$
This, working in the same way as before, implies that if we set $$\mathcal{H}_{n,\bar{h}}^{(1)}=\left\{h\in B_{\bar{h}}:\lVert h-\bar{h}^*\rVert_2\le n^{-2/5}\right\},$$
then
$$\SL{h\in B_{\bar{h}}\setminus\mathcal{H}_{n,\bar{h}}^{(1)}}\E(Z_h)\le n^{\Theta(1)}\cdot\exp(n\Phi_2(\bar{h}^*))\cdot\exp(-cn^{1/5}).$$
If for some $h$, $\bar{h}\in\bar{\mathcal{H}}_{n,m^{\otimes2}}^{(1)}$ and $h\in\mathcal{H}_{n,\bar{h}}^{(1)}$, due to (\ref{h-contr-reg}) and Stirling's formula,
$$\E(Z_h)\sim\dfrac{2^{17}\cdot(d-1)^{10}}{(\pi n)^{9/2}\cdot d^9\cdot(d-2)^4}\cdot\exp(n\Phi_2(h)),$$
so
\begin{equation}\label{contr-good-h}
\SL{\bar{h}\in\bar{\mathcal{H}}_{n,m^{\otimes2}}^{(1)}}\SL{h\in\mathcal{H}_{n,\bar{h}}^{(1)}}\E(Z_h)\sim\dfrac{2^{17}\cdot(d-1)^{10}}{(\pi n)^{9/2}\cdot d^9\cdot(d-2)^4}\SL{\bar{h}\in\bar{\mathcal{H}}_{n,m^{\otimes2}}^{(1)}}\SL{h\in\mathcal{H}_{n,\bar{h}}^{(1)}}\exp(n\Phi_2(h))
\end{equation}
We claim that all the sums of the form
$$\exp(-n\Phi_2(\bar{h}^*))\SL{h\in\mathcal{H}_{n,\bar{h}}^{(1)}}\exp(n\Phi_2(h)),$$
for $\bar{h}\in\bar{\mathcal{H}}_{n,m^{\otimes2}}^{(1)}$, are within a $1+o(1)$ factor.\\\\ 
Indeed, for any such $\bar{h}$ and $h\in\mathcal{H}_{n,\bar{h}}^{(1)}$, $\Phi_2(h)=\Phi_2(\bar{h}^*)+\frac{1}{2}\langle h-\bar{h}^*,\mathbf{H}_{\bar{h}}(h-\bar{h}^*)\rangle+o(n^{-1})$, therefore
$$\exp(-n\Phi_2(\bar{h}^*))\SL{h\in\mathcal{H}_{n,\bar{h}}^{(1)}}\exp(n\Phi_2(h))\sim\SL{h\in\mathcal{H}_{n,\bar{h}}^{(1)}}\exp\left(\dfrac{n}{2}\langle h-\bar{h}^*,\mathbf{H}_{\bar{h}}(h-\bar{h}^*)\rangle\right).$$
Also, since $\lVert\mathbf{H}_{\bar{h}}-\mathbf{H}_{\bar{h}_*}\rVert_{\text{OP}}=O(n^{-1/4})$ as explained above, for any $h\in\mathcal{H}_{n,\bar{h}}^{(1)}$,
\begin{align*}
\langle h-\bar{h}^*,\mathbf{H}_{\bar{h}}(h-\bar{h}^*)\rangle&=\langle h-\bar{h}^*,\mathbf{H}_{\bar{h}_*}(h-\bar{h}^*)\rangle+\langle h-\bar{h}^*,(\mathbf{H}_{\bar{h}}-\mathbf{H}_{\bar{h}_*})(h-\bar{h}^*)\rangle\\&=\langle h-\bar{h}^*,\mathbf{H}_{\bar{h}_*}(h-\bar{h}^*)\rangle+o(n^{-1}),
\end{align*}
because $|\langle h-\bar{h}^*,(\mathbf{H}_{\bar{h}}-\mathbf{H}_{\bar{h}_*})(h-\bar{h}^*)\rangle|\le\lVert h-\bar{h}^*\rVert_2^2\cdot\lVert\mathbf{H}_{\bar{h}}-\mathbf{H}_{\bar{h}_*}\rVert_{\text{OP}}=O(n^{-4/5-1/4})=o(n^{-1}).$
Putting all of this together implies
\begin{align*}
\exp(-n\Phi_2(\bar{h}^*))\SL{h\in\mathcal{H}_{n,\bar{h}}^{(1)}}\exp(n\Phi_2(h))&\sim\SL{h\in\mathcal{H}_{n,\bar{h}}^{(1)}}\exp\left(-\dfrac{n}{2}\langle h-\bar{h}^*,\mathbf{H}_{\bar{h}}(h-\bar{h}^*)\rangle\right)
\\&\sim\SL{h\in\mathcal{H}_{n,\bar{h}}^{(1)}}\exp\left(-\dfrac{n}{2}\langle h-\bar{h}^*,\mathbf{H}_{\bar{h}_*}(h-\bar{h}^*)\rangle\right)
\\&\ \sim\exp(-n\Phi_2(h_*))\SL{h\in\mathcal{H}_{n,\bar{h}_*}^{(1)}}\exp(n\Phi_2(h))\\&\ \sim\dfrac{d^8\cdot(d-2)^4\cdot(\pi n)^3}{2^{29/2}\cdot(d-1)^8\cdot\sqrt{d^2-2d+2}},
\end{align*}
where the last equality follows because of (\ref{det-Hessian}) and Laplace's method. We combine everything to prove (\ref{sec-mom-Zm-reg}). We work as in the calculation of the first moment.
\begin{align*}
\SL{\bar{h}\in\bar{\mathcal{H}}_{n,m^{\otimes2}}^{(1)}}\SL{h\in\mathcal{H}_{n,\bar{h}}^{(1)}}\exp(n\Phi_2(h))&=\SL{\bar{h}\in\bar{\mathcal{H}}_{n,m^{\otimes2}}^{(1)}}\exp(n\Phi_2(\bar{h}^*))\cdot\left(\exp(-n\Phi_2(\bar{h}^*))\SL{h\in\mathcal{H}_{n,\bar{h}}^{(1)}}\exp(n\Phi_2(h))\right)
\\&\sim\dfrac{d^8\cdot(d-2)^4\cdot(\pi n)^3}{2^{29/2}\cdot(d-1)^8\sqrt{d^2-2d+2}}\SL{\bar{h}\in\bar{\mathcal{H}}_{n,m^{\otimes2}}^{(1)}}\exp(n\Phi_2(\bar{h}^*))
\\& \sim\dfrac{d^8(d-2)^{7/2}(\pi n)^{7/2}}{2^{16}\cdot(d-1)^{17/2}}\cdot\exp\left(n\Phi_2(h_*)-\dfrac{(d-1)(d-2)m^4}{6d^2n^3}\right),
\end{align*}
where the last equality is due to Laplace's method. Together with (\ref{contr-good-h}) and (\ref{sec-mom-expan-reg}), this implies
$$\E(Z_m^2)\sim\dfrac{2\cdot(d-1)^{3/2}}{\pi nd\cdot\sqrt{d-2}}\cdot\exp(n\Phi_2(h_*))\cdot\exp\left(-\dfrac{(d-1)(d-2)}{6d^2}\cdot\dfrac{m^4}{n^3}\right),$$
and since
$$\Phi_2(h_*)=\log(4)+d\log\cosh(\beta),$$
we have successfully proven (\ref{sec-mom-Zm-reg}).
\end{proof}
\subsection{Erd\"os-R\'enyi Graphs}
\subsubsection{First moment calculation}
For any $\sigma$, $\tilde{Z}_\sigma$ is a product of independent random variables and its expectation only depends on $m(\sigma)$. Trivially, the number of $u$ such that $\sigma_u=1$ is $\frac{n+m}{2}$, while the number of $u$ such that $\sigma_u=-1$ is $\frac{n-m}{2}$. Moreover, the number of pairs $\{u,v\}$ such that $\sigma_u\sigma_v=1$ is $\binom{\frac{n+m}{2}}{2}+\binom{\frac{n-m}{2}}{2}=\frac{n^2+m^2}{4}-\frac{n}{2}$, whereas the number of $\{u,v\}$ such that $\sigma_u\sigma_v=-1$ is $\frac{n+m}{2}\cdot\frac{n-m}{2}=\frac{n^2-m^2}{4}$. Therefore, if $m(\sigma)=m$, then since $\frac{e^{\pm\beta}}{\cosh(\beta)}-1=\pm\frac{1}{d}$,
\begin{align*}
\E(\tilde{Z}_\sigma)&=2^{-n}\left(1+\dfrac{1}{n}\right)^{\frac{n^2+m^2}{4}-\frac{n}{2}}\cdot\left(1-\dfrac{1}{n}\right)^{\frac{n^2-m^2}{4}}\\&\sim2^{-n}\cdot\exp\left(\dfrac{m^2}{2n}-\frac{3}{4}\right),
\end{align*}
as $\log(1\pm\frac{1}{n})=\pm\frac{1}{n}-\frac{1}{2n^2}+O(n^{-3})$.\\\\
Taylor expanding the function $H(x)=-\frac{1+x}{2}\log(1+x)-\frac{1-x}{2}\log(1-x)$ around $x=0$ implies that
\begin{align*}
\E(\tilde{Z}_m)=\dbinom{n}{\frac{n+m}{2}}\cdot\E(\tilde{Z}_\sigma)&\sim\dfrac{\sqrt{2}}{\sqrt{\pi n}}\cdot\exp\left(H\left(\frac{m}{n}\right)+\frac{m^2}{2n}-\frac34\right)\\&\sim\dfrac{\sqrt{2}}{\sqrt{\pi n}}\cdot\exp\left(-\dfrac{m^4}{12n^3}-\dfrac{3}{4}\right).
\end{align*}
Therefore,
$$\SL{|m|\le n^{3/4}}\E(\tilde{Z}_m)\gtrsim n^{1/4}$$
and for any $C>0$,
$$\SL{|m|\le Cn^{3/4}}\E(\tilde{Z}_m)\lesssim n^{-1/2}\SL{k=0}^C\SL{k\le|m|\cdot n^{-3/4}\le k+1}\exp(-\tfrac{m^4}{12n^3})\lesssim n^{1/4}\SL{k=0}^C\exp(-\tfrac{k^4}{12})\lesssim n^{1/4}.$$
It follows that indeed, $\tilde{c}_0>0$.\\\\ 
Also, because of Lemma \ref{comp-fn-Taylor}, for $|m|\ge Cn^{3/4}$, $\E(\tilde{Z}_m)\lesssim n^{-1/2}\cdot\exp(-c\frac{m^4}{n^3})$. This implies
$$\SL{|m|>Cn^{3/4}}\E(\tilde{Z}_m)\lesssim\SL{k\ge C}\SL{k\le|m|\cdot n^{-3/4}\le k+1}n^{-1/2}\exp(-c\tfrac{m^4}{n^3})\lesssim n^{1/4}\cdot\SL{k\ge C}\exp(-ck^4).$$
Choosing $C>0$ to be large enough finishes the proof of Lemma \ref{right-scal-ER}. The first statement of Proposition \ref{Zm-ER-mom} is already proven.
\subsubsection{Second moment calculation}
\begin{proof}[Proof of the second statement of Proposition \ref{Zm-ER-mom}]
For any $\sigma,\sigma'\in\{-1,1\}^n$ with $m(\sigma)=m(\sigma')=m$, set
$$t(\sigma,\sigma')=\dfrac{1}{\sqrt{n}}\cdot\left(\SL{u}\sigma_u\sigma_u'-\dfrac{m^2}{n}\right).$$
From now on, for simplicity purposes, if $\sigma,\sigma'$ are fixed, we will just use $t$ instead of $t(\sigma,\sigma')$. By the definition of $t$, the number of vertices $u$ such that: 
\begin{itemize}
\item
$(\sigma_u,\sigma_u')=(1,1)$ is $n\tilde{h}(1,1)=\frac{n+\sqrt{n}t+\frac{m^2}{n}+2m}{4}$.
\item 
$(\sigma_u,\sigma_u')=(-1,-1)$ is $n\tilde{h}(-1,-1)=\frac{n+\sqrt{n}t+\frac{m^2}{n}-2m}{4}$.
\item 
$(\sigma_u,\sigma_u')=(1,-1)$ is $n\tilde{h}(1,-1)=\frac{n-\sqrt{n}t-\frac{m^2}{n}}{4}$.
\item 
$(\sigma_u,\sigma_u')=(-1,1)$ is $n\tilde{h}(-1,1)=\frac{n-\sqrt{n}t-\frac{m^2}{n}}{4}$.
\end{itemize}
This means that the number of pairs $\{u,v\}$ such that:
\begin{itemize}
\item
$\sigma_u\sigma_v+\sigma'_u\sigma'_v=2$ is
\begin{align*}
\dbinom{\frac{n+\sqrt{n}t+\frac{m^2}{n}+2m}{4}}{2}+\dbinom{\frac{n+\sqrt{n}t+\frac{m^2}{n}-2m}{4}}{2}+2\cdot\dbinom{\frac{n-\sqrt{n}t-\frac{m^2}{n}}{4}}{2}\\=\ \dfrac{n^2+(\sqrt{n}t+\frac{m^2}{n})^2}{8}+\frac{m^2}{4}-\frac{n}{2}.
\end{align*}
\item 
$\sigma_u\sigma_v+\sigma_u'\sigma_v'=0$ is
\begin{align*}
\left(\frac{n+\sqrt{n}t+\frac{m^2}{n}+2m}{4}+\frac{n+\sqrt{n}t+\frac{m^2}{n}-2m}{4}\right)\cdot\left(\frac{n-\sqrt{n}t-\frac{m^2}{n}}{4}+\frac{n-\sqrt{n}t-\frac{m^2}{n}}{4}\right)
\\=\dfrac{n^2-(\sqrt{n}t+\frac{m^2}{n})^2}{4}
\end{align*}
\item 
$\sigma_u\sigma_v+\sigma_u'\sigma_v'=-2$ is
\begin{align*}
\frac{n+\sqrt{n}t+\frac{m^2}{n}+2m}{4}\cdot\frac{n+\sqrt{n}t+\frac{m^2}{n}-2m}{4}+\left(\frac{n-\sqrt{n}t-\frac{m^2}{n}}{4}\right)^2\\=\ \dfrac{n^2+(\sqrt{n}t+\frac{m^2}{n})^2}{8}-\dfrac{m^2}{4}.
\end{align*}
\end{itemize}
Keeping all of this in mind and combining it with the facts that $\frac{e^{\pm2\beta}}{\cosh(\beta)^2}-1=\pm\dfrac{2}{d}+\dfrac{1}{d^2}$ and $\frac{1}{\cosh(\beta)^2}-1=-\dfrac{1}{d^2}$ implies that
\begin{align*}
\E(\tilde{Z}_\sigma\tilde{Z}_{\sigma'})\sim&\ 4^{-n}\left(1+\tfrac{2d+1}{dn}\right)^{\frac{n^2+(\sqrt{n}t+\frac{m^2}{n})^2}{8}+\frac{m^2}{4}-\frac{n}{2}}\left(1-\tfrac{1}{dn}\right)^{\frac{n^2-(\sqrt{n}t+\frac{m^2}{n})^2}{4}}\left(1-\tfrac{2d-1}{dn}\right)^{\frac{n^2+(\sqrt{n}t+\frac{m^2}{n})^2}{8}-\frac{m^2}{4}}\\\sim&4^{-n}\cdot\exp\left[\left(\tfrac{2d+1}{dn}-\tfrac{(2d+1)^2}{2d^2n^2}\right)\left(\tfrac{n^2+(\sqrt{n}t+\frac{m^2}{n})^2}{8}+\tfrac{m^2}{4}-\tfrac{n}{2}\right)\right.\\&\left.+\left(-\tfrac{1}{dn}-\tfrac{1}{2d^2n^2}\right)\cdot\tfrac{n^2-(\sqrt{n}t+\tfrac{m^2}{n})^2}{4}+\left(-\tfrac{2d-1}{dn}-\tfrac{(2d-1)^2}{2d^2n^2}\right)\left(\tfrac{n^2+(\sqrt{n}t+\frac{m^2}{n})^2}{8}-\tfrac{m^2}{4}\right)\right]
\\ \sim& 4^{-n}\cdot\exp\left(\tfrac{m^2}{n}+\tfrac{t^2}{2d}+\tfrac{m^2t}{dn^{3/2}}+\tfrac{m^4}{2dn^3}-\tfrac{1+2d+6d^2}{4d^2}\right).
\end{align*}
For any $t$, let
$$\tilde{Z}_{m,t}=\SL{\substack{(\sigma,\sigma'):\ m(\sigma)=m(\sigma')=m\\t(\sigma,\sigma')=t}}\tilde{Z}_\sigma\tilde{Z}_{\sigma'}.$$
For any $t$,
$$\dfrac{n!}{\left(n\tilde{h}(1,1)\right)!\cdot\left(n\tilde{h}(-1,-1)\right)!\cdot\left(n\tilde{h}(1,-1)\right)!^2}\le n^{\Theta(1)}\cdot\exp(nH(\tilde{h})),$$
where $H(\tilde{h})$ is the Shannon entropy of the probability distribution $\tilde{h}$. So,
\begin{equation}\label{contr-t-far-1}
\E(\tilde{Z}_{m,t})\le n^{\Theta(1)}\cdot\exp\left[n\left(\log(4)+\tfrac{m^2}{n^2}+\tfrac{t^2}{2dn}+\tfrac{m^2t}{dn^{5/2}}+H(\tilde{h})\right)\right]
=n^{\Theta(1)}\cdot\exp\left(\tilde{\Phi}(\tfrac{m}{n},\tfrac{t}{\sqrt{n}})\right),
\end{equation}
where $\tilde{\Phi}$ is the function
\begin{align*}
\tilde{\Phi}(x,y)=&\ x^2+\frac{y^2}{2d}+\dfrac{x^2y}{d}-\dfrac{(1+x)^2+y}{4}\cdot\log\left((1+x)^2+y\right)\\&-\dfrac{(1-x)^2+y}{4}\cdot\log\left((1-x)^2+y\right)-\dfrac{1-x^2-y}{2}\cdot\log\left(1-x^2-y\right).
\end{align*}
Around (0,0), the Taylor expansion of $\tilde{\Phi}$ is
$$\tilde{\Phi}(x,y)=-\dfrac{d-1}{2d}\cdot y^2-\left(\dfrac{1}{6}-\dfrac{1}{2d}\right)\cdot x^4+\dfrac{1}{d}\cdot x^2y+O\left(y^3+x^6\right),$$
so there exists some constant $c>0$ for which
$$\tilde{\Phi}(x,y)\le-cy^2+O(y^3+x^4).$$
Viewing $\tilde{\Phi}$ as a function of $y$, with fixed $x$ Lemma \ref{comp-fn-Taylor} implies that $n\tilde{\Phi}(\tfrac{m}{n},\tfrac{t}{\sqrt{n}})\le-ct^2+O(\frac{m^4}{n^3})$.
Therefore, for $|t|>n^{1/10}$, by plugging this back into (\ref{contr-t-far-1}) we find that
\begin{equation}\label{contr-t-far-2}
\E(\tilde{Z}_{m,t})\le n^{\Theta(1)}\cdot\exp\left(-cn^{1/5}\right).
\end{equation}
On the other hand, for any $|t|\le n^{1/10}$, the possible choices for $\sigma$ and $\sigma'$ are
\begin{align*}
&\dfrac{n!}{\left(\frac{n+\sqrt{n}t+\frac{m^2}{n}+2m}{4}\right)!\cdot\left(\frac{n+\sqrt{n}t+\frac{m^2}{n}-2m}{4}\right)!\cdot\left(\frac{n-\sqrt{n}t-\frac{m^2}{n}}{4}\right)!^2}
\sim4^n\dfrac{4\sqrt{2}}{(\pi n)^{3/2}}\exp\left(-\frac{m^2}{n}-\frac{t^2}{2}-\frac{m^4}{6n^3}\right).
\end{align*}
So, keeping (\ref{contr-t-far-2}) in mind,
\begin{align*}
\E(\tilde{Z}_m^2)&\sim\dfrac{4\sqrt{2}}{(\pi n)^{3/2}}\cdot\exp\left(
\frac{m^4}{2dn^3}-\frac{m^4}{6n^3}-\frac{1+2d+6d^2}{4d^2}\right)\cdot\SL{|t|\le n^{1/10}}\exp\left(
-\frac{d-1}{2d}t^2+\frac{m^2}{dn^{3/2}}t\right)
\\&\sim\dfrac{2}{\pi n}\cdot\exp\left(
-\dfrac{m^4}{6n^3}+\dfrac{m^4}{2(d-1)n^3}-\dfrac{1+2d+6d^2}{4d^2}\right)\cdot\sqrt{\dfrac{d}{d-1}}.
\end{align*}
This proves the second statement of Proposition \ref{Zm-ER-mom}.
\end{proof}
\section{Cycle and path counts}\label{5}
\subsection{Cycle counts in $G_{n,d}$}
The goal of this subsection is to prove Proposition \ref{cyc-reg}. In fact, since statement 1 is very well-known, we only prove statement 2. The analog of this statement was proven in \cite{Fabian-Loick} for the anti-ferromagnetic Ising model, however there are some minor differences with our case, which is why we prove it here.\\\\
For any $h\in\mathcal{H}_{n,m}^{(1)}$ and $i\ge1$, we prove that
\begin{equation}\label{cyc-planted}
\dfrac{\E(Z_h\cdot Y_{i,n})}{\E(Z_h)}\sim1+(d-1)^{-i}.
\end{equation}
For any $C$ of length $i$ that has vertices $v_1,\dots,v_i$, we first plant $\sigma_C\in\{-1,1\}^C$, which also induces the numbers $k_{++},k_{+-}$ and $k_{--}$ of edges in $C$ that connect two vertices with $+1$'s, one with $+1$ and one with $-1$ and two with $-1$'s, respectively. We now construct the rest of the graph, in exactly the same way as in (\ref{A-form-reg}). There are $\binom{n-i}{\frac{n-i+m-m(\sigma_C)}{2}}$ ways to put the remaining spins on the vertices, $\binom{d\frac{n+m}{2}-2k_{++}-k_{+-}}{dnh(1,1)-2k_{++}}\cdot\binom{d\frac{n-m}{2}-2k_{--}-k_{+-}}{dnh(-1,-1)-2k_{--}}$ ways to put the spins on the half-edges and $(dnh(1,1)-2k_{++}-1)!!\cdot(dnh(-1,-1)-2k_{--}-1)!!\cdot(dnh(1,-1)-k_{+-})!$ ways to connect the half edges in an admissible way. Therefore, for any $C$,
\begin{align*}
\E(Z_h|C\in G)=\tfrac{1}{(dn-2i-1)!!}\SL{\sigma_C}&\tbinom{n-i}{\frac{n-i+m-m(\sigma_C)}{2}}\tbinom{d\frac{n+m}{2}-2k_{++}-k_{+-}}{dnh(1,1)-2k_{++}}\cdot\tbinom{d\frac{n-m}{2}-2k_{--}-k_{+-}}{dnh(-1,-1)-2k_{--}}\\&\cdot(dnh(1,1)-2k_{++}-1)!!\cdot(dnh(-1,-1)-2k_{--}-1)!!\\&\cdot(dnh(1,-1)-k_{+-})!
\end{align*}
Using (\ref{A-form-reg}), Stirling's formula and the fact that for $h\in\mathcal{H}_{n,m}^{(1)}$, $h-h_*=o(1)$, we find that
\begin{align*}
\dfrac{\E(Z_h|C\in G)}{\E(Z_h)}&\sim\left(\dfrac{dn}{2}\right)^i\SL{\sigma_C}4^i(dn)^{-i}h(1,1)^{k_{++}}h(-1,-1)^{k_{--}}h(1,-1)^{k_{+-}}
\\&\sim\SL{\sigma_C}(2h(1,1))^{k_{++}}(2h(-1,-1))^{k_{--}}(2h(1,-1))^{k+-}
\\&\sim\SL{\sigma_C}\dfrac{1}{(2\cosh(\beta))^i}\exp\left(\beta\SL{j=1}^i\sigma_j\sigma_{j+1}\right)
\\&\sim1+(d-1)^{-i},
\end{align*}
which proves (\ref{cyc-planted}). As we already explained in (\ref{contr-h-away}),
\begin{align*}
\E\left(Y_{i,n}\SL{h\in\mathcal{H}_{n,m}\setminus\mathcal{H}_{n,m}^{(1)}}Z_h\right)\le n^{\Theta(1)}\cdot\SL{h\in\mathcal{H}_{n,m}\setminus\mathcal{H}_{n,m}^{(1)}}\E(Z_h)&\le \exp(-cn^{1/5})\cdot n^{\Theta(1)}\cdot\E(Z_m)\\&=o(\E(Z_m)).
\end{align*}
Combining this with (\ref{cyc-planted}), we get that
\begin{align*}
\dfrac{\E(Z_m\cdot Y_{i,n})}{\E(Z_m)}=\dfrac{\SL{C\in G}\SL{h\in\mathcal{H}_{n,m}}\E(Z_h\mathbf{1}_{C\in G})}{\SL{h\in\mathcal{H}_{n,m}}\E(Z_h)}&\sim\dfrac{\frac{(d-1)^i}{2i}\SL{h\in\mathcal{H}_{n,m}^{(1)}}\E(Z_h|C\in G)}{\SL{h\in\mathcal{H}_{n,m}^{(1)}}\E(Z_h)}\\&\sim\frac{(d-1)^i+1}{2i}.
\end{align*}
Therefore, the $Y_{i,n}$ have the correct first moments under the planted measure. Proving the asymptotic independence, as well as the distribution of the limit relies on calculating the joint factorial moments of $(Y_{i,n})$ under the planted measure. As in the simple case we have demonstrated above, this comes down to showing that several sequences of vertices form cycles concurrently. Under the planted measure, the existence of short cycles with non-empty overlap is of order $O(1/n)$. Therefore, the joint factorial moment calculation comes down to a sum over distinct vertices, and this is a trivial generalization of the calculation we demonstated above.
\subsection{Joint path and cycle counts in $G(n,d/n)$}
The goal of this subsection is to prove Proposition \ref{cyc-path-ER}. The first step towards that will be to calculate the expectation and the variance of $X_{\ell,n}$, showing that the normalization performed in (\ref{hat-X-def}) makes sense. 
\begin{lem}\label{exp-var-path-cyc}
\begin{enumerate}
\item
The random variable $X_{\ell,n}$ has
\begin{align*}
&\E(X_{\ell,n})=(1+O(n^{-1}))\cdot\dfrac{1}{2}nd^\ell\ \ \text{and}\\ &\text{Var}(X_{\ell,n})=(1+O(n^{-1}))\cdot\dfrac{1}{2}nd^{2\ell}\dfrac{\ell^2}{d-1}\left(1+\gamma_\ell^{(2)}\right),
\end{align*}
where $\lim\limits_{\ell\to\infty}\gamma_\ell^{(2)}=0$.
\item 
Let $m$ be as in statement 4 of Proposition \ref{cyc-path-ER}. Then, under the planted measure $\PR_m^*$,
\begin{align*}
&\E_m^*(X_{\ell,n})=(1+O(n^{-1}))\cdot\dfrac{1}{2}nd^\ell\left(1+m^2n^{-2}\dfrac{\ell}{d-1}\left(1+\gamma_\ell^{(1)}\right)\right)\ \ \text{and}\\ &\text{Var}_m^*(X_{\ell,n})\sim\text{Var}(X_{\ell,n}),
\end{align*}
where $\lim\limits_{\ell\to\infty}\gamma_\ell^{(1)}=0$.
\end{enumerate}
\end{lem}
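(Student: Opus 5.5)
The plan is to write $X_{\ell,n}=\sum_{P}\mathbf 1_{P\subseteq G}$, where $P$ ranges over the $\tfrac12 n(n-1)\cdots(n-\ell)$ unordered paths with $\ell$ edges on $\ell+1$ distinct vertices. Throughout, $\ell$ is fixed and $n\to\infty$. Under $G(n,d/n)$ with $p=d/n$,
$$\E(X_{\ell,n})=\tfrac12 n(n-1)\cdots(n-\ell)\,p^\ell=(1+O(n^{-1}))\tfrac12 nd^\ell.$$
For the variance, I would write $\text{Var}(X_{\ell,n})=\sum_{P,P'}\bigl(p^{|E(P)\cup E(P')|}-p^{2\ell}\bigr)$. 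Only pairs sharing at least one edge contribute.

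I would classify such pairs by the overlap graph $E(P)\cap E(P')$. If the overlap has $k$ edges and $r$ connected components (runs), then $P\cup P'$ has $2\ell-k$ edges and $2\ell+2-k-r$ vertices. The contribution of such pairs is therefore of order $n^{2\ell+2-k-r}p^{2\ell-k}=n^{2-r}d^{2\ell-k}$. Hence only $r=1$ matters, and every $r\ge2$ class is $O(1)=O(n^{-1})\cdot n$. For $r=1$ the overlap is one contiguous segment of $k$ edges. I would count its placements: $(\ell-k+1)$ positions in $P$, $(\ell-k+1)$ positions in $P'$, and two relative orientations. The orientation factor cancels against the $\tfrac12$'s in the counts of unordered paths, leaving $\tfrac12 n d^{2\ell}\sum_{k=1}^\ell(\ell-k+1)^2d^{-k}$. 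The subtracted term $p^{2\ell}$ and the finite-$n$ falling factorials only cost a factor $1+O(n^{-1})$. By the definition of $\gamma^{(2)}_\ell$, this sum equals $\tfrac{\ell^2}{d-1}(1+\gamma_\ell^{(2)})$. The limit $\gamma_\ell^{(2)}\to0$ follows from $\sum_{k\ge1}d^{-k}=\tfrac1{d-1}$ together with $(\ell-k+1)^2/\ell^2\to1$ for each fixed $k$, by dominated convergence. The same argument, with $(\ell-k+1)/\ell$ in place of $(\ell-k+1)^2/\ell^2$, gives $\gamma^{(1)}_\ell\to0$.

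For the planted measure, I would use that $\E(\tilde Z_\sigma)$ depends only on $m(\sigma)$. Hence $\PR^*_m$ is the law of the following procedure: draw $\sigma$ uniformly among configurations with magnetization $m$, then include each edge $uv$ independently with probability
$$q_{uv}=\frac{p\,e^{\beta\sigma_u\sigma_v}/\cosh\beta}{1+p(e^{\beta\sigma_u\sigma_v}/\cosh\beta-1)}=\frac dn\Bigl(1+\frac{\sigma_u\sigma_v}{d}\Bigr)(1+O(n^{-1})).$$
Then $\E^*_m(X_{\ell,n})=(1+O(n^{-1}))p^\ell\sum_P\E_\sigma\prod_{j}(1+\sigma_{v_j}\sigma_{v_{j+1}}/d)$. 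I would expand the product over subsets $S$ of the path edges. The term for $S$ equals $d^{-|S|}$ times the product of spins at the $2r$ odd-degree vertices of $S$, where $r$ is the number of runs of $S$. Under the uniform measure with magnetization $m$, distinct spins satisfy $\E\prod_{i=1}^{2r}\sigma_{u_i}=(m/n)^{2r}+O(n^{-1})$.

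This gives three kinds of terms. The term $r=0$ contributes $1$. The terms $r=1$, with a run of length $k$ in one of $\ell-k+1$ positions, contribute $\frac{m^2}{n^2}\sum_k(\ell-k+1)d^{-k}=\frac{m^2}{n^2}\frac{\ell}{d-1}(1+\gamma^{(1)}_\ell)$. The terms $r\ge2$, together with the $O(n^{-1})$ corrections, contribute $O((m/n)^4+n^{-1})=O(n^{-1})$, because $|m|\le Cn^{3/4}$. This yields the claimed planted mean.

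For the planted variance, I would use $\text{Var}^*_m=\E_\sigma\text{Var}(X\mid\sigma)+\text{Var}_\sigma\E(X\mid\sigma)$. Since $\E(X\mid\sigma)$ sums over all paths, it is invariant under vertex permutations. It is therefore constant on $\{m(\sigma)=m\}$, so the second term vanishes.

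For the first term, I would repeat the overlap computation. Now $\text{Cov}(\mathbf 1_P,\mathbf 1_{P'}\mid\sigma)=\prod_{e\in P\cup P'}q_e-\prod_{e\in P}q_e\prod_{e\in P'}q_e$, and again only single-run overlaps matter. Averaging over $\sigma$ and over placements, by exchangeability, replaces each product of factors $1+\sigma\sigma/d$ by $1+O(m^2/n^2)=1+o(1)$, using the same run-expansion as for the mean. The $r\ge2$ classes remain $O(1)$ as before. This gives $\text{Var}^*_m(X_{\ell,n})\sim\text{Var}(X_{\ell,n})$.

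I expect the main obstacle to be this last step. The individual factors $1\pm1/d$ are not small, so one must check carefully that after averaging over $\sigma$ both covariance terms equal their unplanted values up to a factor $1+o(1)$. One must also check that the subtracted product $\prod_{e\in P}q_e\prod_{e\in P'}q_e$ still cancels the leading part of $\prod_{e\in P\cup P'}q_e$ for pairs without shared edges. For these disjoint pairs I would note that, conditionally on $\sigma$, the covariance is exactly zero, so no cancellation across $\sigma$ is needed.
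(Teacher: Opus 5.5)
Your argument is correct. Part 1 is the paper's argument in substance. The paper counts pairs meeting in a single segment exactly as you do.

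For the planted mean the two routes use dual expansions of the same quantity. The paper first replaces the hypergeometric law of the spins on the path by i.i.d.\ spins of mean $m/n$. It then recognizes the Ising measure $\mu_\ell$ on the segment and expands $\prod_i(1+\sigma_{w_i}m/n)$ using $\E_{\mu_\ell}(\sigma_{w_i}\sigma_{w_j})=d^{-|i-j|}$. You instead expand $\prod(1+\sigma_u\sigma_v/d)$ over edge subsets under the uniform magnetization-$m$ spin law. Both give the same sum $\sum_k(\ell+1-k)d^{-k}$.

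The genuine difference is the planted variance. The paper works pair by pair. It shows $\PR^*_m(P,Q\in G)\sim\PR(P,Q\in G)$ when $P\cup Q$ is a tree, and then restricts the variance sum to edge-sharing pairs, appealing to the earlier unplanted computation. That restriction tacitly uses edge independence, which $\PR^*_m$ does not have unconditionally. Edge-disjoint pairs have small but nonzero covariance under $\PR^*_m$, and there are $\Theta(n^{2\ell+2})$ of them, so their total needs an estimate the paper does not supply.

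Your decomposition $\text{Var}^*_m=\E_\sigma\text{Var}(X_{\ell,n}\mid\sigma)+\text{Var}_\sigma\E(X_{\ell,n}\mid\sigma)$ settles this exactly. The second term vanishes identically by permutation invariance. The first reduces to edge-sharing pairs because edges are conditionally independent given $\sigma$. On this point your route is the more complete one.

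Two small fixes are needed:
\begin{itemize}
\item The count $2\ell+2-k-r$ is only an upper bound on $|V(P\cup P')|$. Pairs sharing extra vertices off the common edges also fall into the $O(1)$ error term.
\item For the tree $P\cup P'$, the expansion runs over subforests and their odd-degree vertices rather than runs. It gives $1+O(m^2/n^2+n^{-1})$, not $1+O(m^2/n^2)$.
\end{itemize}
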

\begin{proof}
For any $\ell\in\N$, let $\vec{\mathcal{P}}_\ell=\{(v_0,v_1,\dots,v_\ell):v_i\in[n], v_i\neq v_j\ \forall i,j\}$ be the set of directed paths of length $\ell$ in the complete graph $K_n$ and $\mathcal{P}_\ell=\vec{\mathcal{P}_\ell}/\sim$, where for two paths $p^{(1)}\neq p^{(2)}\in\vec{\mathcal{P}}_\ell$ we write $p^{(1)}\sim p^{(2)}$ if $v_i^{(1)}=v_{\ell-i}^{(2)}$ for any $0\le i\le \ell$. $\mathcal{P}_\ell$ is the set of paths of length $\ell$ in the complete graph $K_n$. Then,
$$\E(X_{\ell,n})=\SL{P\in\mathcal{P}_\ell}\PR(P\in G)=|\mathcal{P}_\ell|\cdot\left(\dfrac{d}{n}\right)^\ell=(1+O(n^{-1}))\cdot\dfrac{1}{2}nd^\ell,$$
as $|\mathcal{P}_\ell|=\frac{|\vec{\mathcal{P}}_\ell|}{2}=\frac{1}{2}n(n-1)\cdots(n-\ell)=(\frac{1}{2}+O(n^{-1}))\cdot n^{\ell+1}$. Also,
\begin{align*}
\text{Var}(X_{\ell,n})&=\SL{\substack{(P,Q)\in\mathcal{P}_\ell^2\\ E(P\cap Q)\neq\varnothing}}\left[\PR(P,Q\in G)-\PR(P\in G)\cdot\PR(Q\in G)\right]\\&=(1+O(n^{-1}))\cdot\SL{k=1}^\ell\SL{\substack{(P,Q)\in\mathcal{P}_\ell^2\\|E(P\cap Q)|=k}}\left(\dfrac{d}{n}\right)^{2\ell-k}
\\&=(1+O(n^{-1}))\cdot\SL{k=1}^\ell|\{(P,Q)\in\mathcal{P}_\ell:|E(P\cap Q)|=k\}|\cdot\left(\dfrac{d}{n}\right)^{2\ell-k}.
\end{align*}
We now prove that $|\{(P,Q)\in\mathcal{P}_\ell^2:|E(P\cap Q)|=k\}|=(1+O(n^{-1}))\cdot\frac{1}{2}(\ell-k+1)^2\cdot n^{2\ell-k+1}$.\\\\ First of all, observe that the number of pairs of paths meeting in at least two different segments is $O(n^{2\ell-k})$. Indeed, since at least $k+2$ vertices are common, there are at most $2\ell+2-(k+2)=2\ell-k$ choices of new vertices to be made. So, most pairs of paths in the set $\{(P,Q)\in\mathcal{P}_\ell^2:|E(P\cap Q)|=k\}$ intersect in a single path of length $k$. Note that if we ask for $|\{(\vec{P},\vec{Q})\in\vec{\mathcal{P}}_\ell^2:|E(\vec{P}\cap \vec{Q})|=k\}|$ instead, this cardinality is $(1+O(n^{-1}))\cdot(\ell-k+1)^2\cdot n^{2\ell-k+1}$. Indeed, for every choice of $\vec{P}$, there are $\ell-k+1$ ways to choose the segment of $\vec{P}$ that will be common with $\vec{Q}$, another $\ell-k+1$ ways to choose which part of $\vec{Q}$ will cover the chosen part and another $(1+O(n^{-1})\cdot n^{\ell-k}$ ways to complete $\vec{Q}$. Every path $\vec{P}\in\vec{\mathcal{P}}_\ell$ is a choice of $P\in\mathcal{P}_\ell$ and a choice of a direction. For any $(P,Q)$ that intersect in a path of length $k$, there are two ways of choosing the direction of $P$ and then \textit{exactly one} way of choosing the direction of $Q$, as their common part already has a direction. Therefore, $|\{(P,Q)\in\mathcal{P}_\ell^2:|E(P\cap Q)|=k\}|=(1+O(n^{-1}))\cdot\frac{1}{2}(\ell-k+1)^2\cdot n^{2\ell-k+1}$, as we initially claimed. Thus:
\begin{align*}
\text{Var}(X_{\ell,n})&=\left(\frac12+O(n^{-1})\right)\cdot nd^{2\ell}\SL{k=1}^\ell(\ell-k+1)^2\cdot d^{-k}
\\&=(1+O(n^{-1}))\cdot\dfrac{1}{2}nd^{2\ell}\dfrac{\ell^2}{d-1}\left(1+\gamma_\ell^{(2)}\right),
\end{align*}
which proves the desired result.\\\\
We move on to the planted measure case. For any $P=(w_0,w_1,\dots,w_\ell)\in\mathcal{P}_\ell$ and $\sigma$ such that $m(\sigma)=m$,
$$\dfrac{\E(\tilde{Z}_\sigma|\ P\in G)}{\E(\tilde{Z}_\sigma)}=\dfrac{\frac{\exp\left(\beta\SL{i=0}^{\ell-1}\sigma_{w_i}\sigma_{w_{i+1}}\right)}{\cosh(\beta)^\ell}}{\PL{i=0}^{\ell-1}\left(1+\frac{\sigma_{w_i}\sigma_{w_{i+1}}}{n}\right)}=(1+O(n^{-1}))\cdot\frac{\exp\left(\beta\SL{i=0}^{\ell-1}\sigma_{w_i}\sigma_{w_{i+1}}\right)}{\cosh(\beta)^\ell}.$$
We calculate $\E_m^*(X_{\ell,n})$.
\begin{align*}
\E_m^*(X_{\ell,n})&=\dfrac{\E(\tilde{Z}_mX_{\ell,n})}{\E(\tilde{Z}_m)}=\dfrac{1}{\E(\tilde{Z}_m)}\cdot\SL{P\in\mathcal{P}_\ell}\E(\tilde{Z}_m\mathbf{1}_{P\in G})\\&=\dfrac{1}{\E(\tilde{Z}_m)}\cdot\SL{P\in\mathcal{P}_\ell}\PR(P\in G)\SL{\sigma:m(\sigma)=m}\E(\tilde{Z}_\sigma|\ P\in G)
\\&=(1+O(n^{-1}))\dfrac12nd^\ell\dbinom{n}{\frac{n+m}{2}}^{-1}\SL{\sigma:m(\sigma)=m}\dfrac{\exp\left(\beta\SL{i=0}^{\ell-1}\sigma_{w_i}\sigma_{w_{i+1}}\right)}{\cosh(\beta)^\ell}
\\&=(1+O(n^{-1}))\frac12nd^\ell\dbinom{n}{\frac{n+m}{2}}^{-1}\SL{\sigma_P\in\{-1,1\}^{V(P)}}\dbinom{n-\ell-1}{\frac{n-\ell-1+m-m(\sigma_P)}{2}}
\dfrac{\exp\left(\beta\SL{i=0}^{\ell-1}\sigma_{w_i}\sigma_{w_{i+1}}\right)}{\cosh(\beta)^\ell}
\\&=(1+O(n^{-1}))\dfrac12nd^\ell\SL{\sigma_P\in\{-1,1\}^{V(P)}}\dfrac{\exp\left(\beta\SL{i=0}^{\ell-1}\sigma_{w_i}\sigma_{w_{i+1}}\right)}{\cosh(\beta)^\ell}\cdot\PL{i=0}^\ell\dfrac{1+\sigma_{w_i}\frac{m}{n}}{2}
\\&=(1+O(n^{-1}))\frac12nd^\ell\cdot\E_{\mu_\ell}\left[\PL{i=0}^\ell\left(1+\sigma_{w_i}\frac{m}{n}\right)\right],
\end{align*}
where $\mu_\ell$ is the Ising model on the segment of length $\ell$, at inverse temperature $\beta=\tanh^{-1}(d^{-1})$. Therefore,
\begin{align*}
\E_m^*(X_{\ell,n})&=(1+O(n^{-1}))\frac12nd^\ell\cdot\E_{\mu_\ell}\left[\PL{i=0}^\ell\left(1+\sigma_{w_i}\frac{m}{n}\right)\right]
\\&=(1+O(n^{-1}))\dfrac{1}{2}nd^\ell\cdot\left(1+\dfrac{m^2}{n^2}\SL{0\le i_1<i_2\le\ell}\E_{\mu_\ell}(\sigma_{w_{i_1}}\sigma_{w_{i_2}})\right)
\\
&=(1+O(n^{-1}))\frac12 nd^{\ell}\bigg(1+\frac{m^2}{n^2}\sum_{0\le i_1<i_2\le\ell}d^{-(i_2-i_1)}+O(n^{-1})\bigg)\\
&=(1+O(n^{-1}))\frac12 nd^{\ell}\bigg(1+\frac{m^2}{n^2}\sum_{k=1}^{\ell}(\ell+1-k)d^{-k}\bigg)\\
&=(1+O(n^{-1}))\dfrac{1}{2}nd^\ell\left(1+m^2n^{-2}\dfrac{\ell}{d-1}\left(1+\gamma_\ell^{(1)}\right)\right).
\end{align*}
Finally, we show that the variance under the planted measure differs from the variance under the regular measure only by a $1+o(1)$ factor. Just as in the previous cases,
\begin{align*}
\text{Var}^*_m(X_{\ell,n})&\sim\SL{\substack{(P,Q)\in\mathcal{P}_\ell^2\\E(P\cap Q)\neq\varnothing}}\PR_m^*(P,Q\in G)
\\&\sim\dfrac{1}{\E(\tilde{Z}_m)}\cdot\SL{\substack{(P,Q)\in\mathcal{P}_\ell^2\\E(P\cap Q)\neq\varnothing}}\PR(P,Q\in G)\SL{\sigma:m(\sigma)=m}\E(\tilde{Z}_\sigma|\ P,Q\in G).
\end{align*}
However, for $P,Q$ whose overlap is a segment, and therefore their union is a tree,
\begin{align*}
\dfrac{1}{\E(\tilde{Z}_m)}\cdot\SL{\sigma:m(\sigma)=m}\E(\tilde{Z}_\sigma|\ P,Q\in G)&\sim\ \dbinom{n}{\frac{n+m}{2}}^{-1}\SL{\sigma:m(\sigma)=m}\dfrac{\exp\left(\beta\SL{\{w,w'\}\in E(P\cup Q)}\sigma_w\sigma_{w'}\right)}{\cosh(\beta)^{|E(P\cup Q)|}}
\\&\sim\SL{\sigma_{P\cup Q}\in\{-1,1\}^{V(P\cup Q)}}\dfrac{\exp\left(\beta\SL{\{w,w'\}\in E(P\cup Q)}\sigma_w\sigma_{w'}\right)}{2^{|V(P\cup Q)|}\cdot\cosh(\beta)^{|E(P\cup Q)|}}\sim1.
\end{align*}
Also, the contribution to the variance from pairs $(P,Q)$ whose union contains a cycle is $O(1)$, as we explained in the computation of $\text{Var}(X_{\ell,n})$. It follows that
$$\text{Var}^*_m(X_{\ell,n})\sim\SL{\substack{(P,Q)\in\mathcal{P}_\ell^2\\E(P\cap Q)\neq\varnothing}}\PR(P,Q\in G)\sim\text{Var}(X_{\ell,n}).$$
We have proven Lemma \ref{exp-var-path-cyc}.
\end{proof}
Next, we calculate the probability that a cycle of length $i$ is contained in $G$ under $\PR_m^*$.
\begin{lem}\label{prob-cyc-pl}
Suppose $m$ is as in Lemma \ref{Zm-ER-mom}. Then, if $C$ is a cycle of length $i\ge3$,
$$\PR_m^*(C\in G)\sim\left(\dfrac{d}{n}\right)^i\cdot(1+d^{-i}).$$
\end{lem}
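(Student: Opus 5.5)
We will follow the computation of $\E_m^*(X_{\ell,n})$ in Lemma \ref{exp-var-path-cyc}, with the path replaced by the cycle $C=(w_1,\dots,w_i)$. By definition of the planted measure and since the edge indicators of $G(n,d/n)$ are independent,
$$\PR_m^*(C\in G)=\dfrac{\E(\tilde{Z}_m\mathbf{1}_{C\in G})}{\E(\tilde{Z}_m)}=\PR(C\in G)\cdot\dfrac{\SL{\sigma:m(\sigma)=m}\E(\tilde{Z}_\sigma\,|\,C\in G)}{\SL{\sigma:m(\sigma)=m}\E(\tilde{Z}_\sigma)},$$
with $\PR(C\in G)=(d/n)^i$. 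For each fixed $\sigma$, $\tilde{Z}_\sigma$ is a product over pairs of independent factors. Conditioning on $C\in G$ only changes the $i$ factors belonging to the edges of $C$. Each such factor has conditional mean $e^{\beta\sigma_u\sigma_v}/\cosh(\beta)$ instead of $1+\sigma_u\sigma_v/n$. Hence
$$\dfrac{\E(\tilde{Z}_\sigma|C\in G)}{\E(\tilde{Z}_\sigma)}=(1+O(n^{-1}))\dfrac{\exp\left(\beta\SL{j=1}^i\sigma_{w_j}\sigma_{w_{j+1}}\right)}{\cosh(\beta)^i},$$
where indices are taken cyclically. Since $\E(\tilde{Z}_\sigma)$ depends only on $m(\sigma)=m$, this common factor cancels between numerator and denominator.

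Next we sum over $\sigma$ by first fixing the restriction $\sigma_C\in\{-1,1\}^{V(C)}$. The number of completions with total magnetization $m$ is $\binom{n-i}{(n-i+m-m(\sigma_C))/2}$. Dividing by $\binom{n}{(n+m)/2}$ gives $(1+O(n^{-1}))\PL{j=1}^i\frac{1+\sigma_{w_j}m/n}{2}$, uniformly for $|m|\le Cn^{3/4}$, exactly as in the path computation. Therefore the ratio equals
$$(1+O(n^{-1}))\SL{\sigma_C}2^{-i}\dfrac{e^{\beta\sum_j\sigma_{w_j}\sigma_{w_{j+1}}}}{\cosh(\beta)^i}\PL{j=1}^i\left(1+\sigma_{w_j}\tfrac{m}{n}\right).$$
Expanding the product, the leading term is the Ising partition function of the $i$-cycle divided by $(2\cosh\beta)^i$. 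By the transfer matrix (eigenvalues $2\cosh\beta$ and $2\sinh\beta$) this equals $1+\tanh(\beta)^i=1+d^{-i}$. The remaining terms are bounded by $O(i^2\,m^2/n^2)=O(n^{-1/2})$, since $i$ is fixed. This yields $\PR_m^*(C\in G)\sim(d/n)^i(1+d^{-i})$, uniformly in $|m|\le Cn^{3/4}$.

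No step is a serious obstacle. The only care needed is the uniformity of the binomial-ratio approximation in $m$, which is the same as in the proof of Lemma \ref{exp-var-path-cyc}.
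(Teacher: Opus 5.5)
Your proposal is correct and follows the paper's proof. You write $\PR_m^*(C\in G)$ as $(d/n)^i$ times an average over $\sigma$ of the tilted edge weights on $C$, reduce it via the binomial ratio to the Ising partition function on the $i$-cycle, and evaluate that as $(2\cosh\beta)^i(1+\tanh(\beta)^i)$, simply making explicit the per-edge factor $1+\sigma_u\sigma_v/n$, the binomial-ratio approximation and the transfer-matrix step that the paper compresses into $\sim$. (Your $O(m^2/n^2)$ bound on the corrections implicitly uses that odd-order terms vanish by spin-flip symmetry, but even without that they are $O(|m|/n)=o(1)$.)
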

\begin{proof}
We work in a similar fashion as before.
\begin{align*}
\PR_m^*(C\in G)&=\dbinom{n}{\frac{n+m}{2}}^{-1}\SL{\sigma:m(\sigma)=m}\dfrac{\E(\tilde{Z}_\sigma\mathbf{1}_{C\in G})}{\E(\tilde{Z}_\sigma)}
\\&\sim\PR(C\in G)\SL{\sigma_C\in\{-1,1\}^{V(C)}}\dfrac{\exp\left(\beta\SL{\{v,v'\}\in E(C)}\sigma_v\sigma_{v'}\right)}{(2\cosh(\beta))^i}
\\&\sim\left(\dfrac{d}{n}\right)^i\cdot(1+\tanh(\beta)^i)
\\&\sim\left(\dfrac{d}{n}\right)^i(1+d^{-i}).
\end{align*}
The Lemma follows.
\end{proof}
We now prove Proposition \ref{cyc-path-ER}. The method of proof is largely based on the proof of asymptotic normality of the path counts that is presented in \cite{Janson2000}, which, in turn, is based on the proof of Ruci\'nski in \cite{Rucinski1988}.
\begin{proof}[Proof of Proposition \ref{cyc-path-ER}]
First, it should be noted that due to Lemma \ref{exp-var-path-cyc},
\begin{align*}
\E(\widehat{X}_{\ell,n})\xrightarrow[n\to\infty]{}0\ \ \text{and}\ \ \text{Var}(\widehat{X}_{\ell,n})\xrightarrow[n\to\infty]{}1+\gamma_\ell^{(2)}
\end{align*}
and if $m\cdot n^{-3/4}\xrightarrow[n\to\infty]{}x$, 
$$
\E_m^*(\widehat{X}_{\ell,n})\xrightarrow[n\to\infty]{}\dfrac{x^2}{\sqrt{2(d-1)}}\left(1+\gamma_\ell^{(1)}\right)\ \ \text{and}\ \ \text{Var}_m^*(\widehat{X}_{\ell,n})\xrightarrow[n\to\infty]{}1+\gamma_\ell^{(2)}.
$$
Therefore, the parameters of the limits are the ones claimed in Proposition \ref{cyc-path-ER}. We prove that for any $r_1,r_3,\dots,r_s\in\N$, as $n\to\infty$,
\begin{equation}\label{mom-ER}
\E\left[(X_{\ell,n}-\E(X_{\ell,n}))^{r_1}\cdot\PL{i=3}^s[Y_{i,n}]_{r_i}\right]\sim\begin{cases}
(r_1-1)!!\cdot\text{Var}(X_{\ell,n})^{r_1/2}\cdot\PL{i=3}^s\tilde{\lambda}_i^{r_i},\ \ &\text{if}\ r_1\ \text{is even}\\
o(1)\cdot\text{Var}(X_{\ell,n})^{r_1/2},&\text{if}\ r_1\ \text{is odd,}
\end{cases}
\end{equation}
where for $x\in\Z_{\ge0}, [x]_r=x(x-1)\cdots(x-r+1)$. The Poisson and normal distributions have exponential tails, therefore moment convergence is enough to prove convergence in distribution. So, proving (\ref{mom-ER}) will imply the result. In the case of the planted measure, this relation will become
$$\E_m^*\left[(X_{\ell,n}-\E_m^*(X_{\ell,n}))^{r_1}\cdot\PL{i=3}^s[Y_{i,n}]_{r_i}\right]\sim\begin{cases}
(r_1-1)!!\cdot\text{Var}_m^*(X_{\ell,n})^{r_1/2}\cdot\PL{i=3}^s\tilde{\mu}_i^{r_i},\ \ &\text{if}\ r_1\ \text{is even}\\
o(1)\cdot\text{Var}_m^*(X_{\ell,n})^{r_1/2},&\text{if}\ r_1\ \text{is odd.}
\end{cases}
$$
The proof for the planted measure is identical, so from now on we only work under the regular measure $\PR$ and only note the slight modifications if and when this is necessary. Notice that due to Lemma \ref{exp-var-path-cyc}, $\text{Var}(X_{\ell,n})=\Theta(n)$. Set $r=r_1+r_3+\cdots+r_s$ and observe that
\begin{equation}\label{path-cyc-exp}
\E\left[(X_{\ell,n}-\E(X_{\ell,n}))^{r_1}\cdot\PL{i=3}^s[Y_{i,n}]_{r_i}\right]
=\SL{G_1,G_2,\dots,G_r}\E\left[\PL{i=1}^{r-r_1}\mathbf{1}_{G_i}\cdot\PL{i=r-r_1+1}^{r}(\mathbf{1}_{G_i}-\E(\mathbf{1}_{G_i}))\right]
\end{equation}
where the sum is over all $r$-tuples $(G_1,\dots,G_r)$ of subgraphs of $K_n$ for which:
\begin{itemize}
\item 
For every $i\in\{3,\dots,s\}$, $G_{r_3+\cdots+r_{i-1}+1},\dots,G_{r_3+\cdots+r_i}$ are pairwise distinct cycles of length $i$ (for $i=3$, $r_3+\cdots+r_{i-1}=0$).
\item
$G_{r-r_1+1},\dots,G_r$ are paths of length $\ell$ (not necessarily distinct).
\end{itemize}
We introduce the graph $L=L(G_1,\dots,G_r)$, which has vertices $G_1,\dots,G_r$ and for $i\neq j$, $G_i\sim_LG_j$ if, and only if, $G_i$ and $G_j$ have a common vertex in $K_n$. By $L'$, we will denote the graph induced by the vertices $G_{r-r_1+1},\dots,G_r$. Also, for $1\le i\le r$, let $F_i$ be the (possibly empty) graph which has the same isomorphism class as $\left(\bigcup\limits_{j=1}^{i-1}G_i\right)\cap G_i$. Setting
$$T(G_1,\dots,G_r):=\E\left[\PL{i=1}^{r-r_1}\mathbf{1}_{G_i}\cdot\PL{i=r-r_1+1}^{r}(\mathbf{1}_{G_i}-\E(\mathbf{1}_{G_i}))\right],$$
we may write (\ref{path-cyc-exp}) as
$$\E\left[(X_{\ell,n}-\E(X_{\ell,n}))^{r_1}\cdot\PL{i=3}^s[Y_{i,n}]_{r_i}\right]
=\SL{L}\SL{\substack{G_1,\dots,G_r\\ \text{correct}\ L}}T(G_1,\dots,G_r).$$
Also, we will denote by $T_m^*(G_1,\dots,G_r)$ the expectation of the same random variable under the planted measure. Let $\mathcal{L}_0$ be the class of graphs $L$ in which $L'$ is a perfect matching and the rest of the vertices are isolated.
\begin{lem}\label{L-notin-L0}
If $L\notin\mathcal{L}_0$, then
\begin{equation}\label{L-neq-L0}
\SL{\substack{G_1,\dots,G_r\\ \text{correct}\ L}}T(G_1,\dots,G_r)=o\left(n^{r_1/2}\right).
\end{equation}
\end{lem}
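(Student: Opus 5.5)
The plan is the standard Ruciński--Janson counting argument. For a fixed dependency graph $L$ on $r$ vertices, I would bound the number of $r$-tuples $(G_1,\dots,G_r)$ with that $L$ together with the size of each summand $T(G_1,\dots,G_r)$, and compare the product with $n^{r_1/2}$. Since $r$, $\ell$ and $s$ are fixed, there are only $O(1)$ graphs $L$. So it suffices to show that each fixed $L\notin\mathcal{L}_0$ contributes $o(n^{r_1/2})$.

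The first step is a set of reductions valid for every $L$. If some path $G_j$ with $j>r-r_1$ is isolated in $L$, it is vertex-disjoint, hence edge-disjoint, from the union of the others. Independence of edges in $G(n,d/n)$ then factors out $\E(\mathbf{1}_{G_j}-\E\mathbf{1}_{G_j})=0$, so $T=0$. Hence we may assume every path-vertex of $L$ has degree at least one in $L$. Note that vertex-sharing without edge-sharing does not by itself kill $T$, so I work with vertex-connectivity as $L$ is defined.

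Next, I split $L$ into its connected components $K_1,\dots,K_q$. The edge sets belonging to different components are disjoint, so $T$ factors over components. It then suffices to bound the contribution of one component containing $a$ cycles and $b$ paths by $O(n^{b/2})$ in general, and by $o(n^{b/2})$ unless the component is a single cycle ($a=1,b=0$) or a single edge between two paths ($a=0,b=2$). Components of the latter two types are exactly those allowed in $\mathcal{L}_0$. Since $L\notin\mathcal{L}_0$, at least one component is of neither type, and multiplying the bounds gives $o(n^{r_1/2})$.

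The counting for one connected component goes as follows. Expand each factor $(\mathbf{1}_{G_j}-\E\mathbf{1}_{G_j})$, which gives $|T|\le 2^{b}\max_{S}\PR(\bigcup_{j\in S}G_j\subseteq G)\cdot\prod_{j\notin S}\PR(G_j\subseteq G)$ over subsets $S$ of the paths. I would add the graphs one at a time in an order in which each new graph $G_i$ meets the union of the previous ones; this is possible since the component is connected. Write $F_i$ for the overlap, with $v_i$ vertices and $e_i$ edges, where $v_i\ge1$ for all $i$ after the first.
\begin{itemize}
\item The number of choices for $G_i$ is $O(n^{|V(G_i)|-v_i})$.
\item The probability factor loses $n^{-(|E(G_i)|-e_i)}$ up to constants.
\item A path has $|V|-|E|=1$ and a cycle has $|V|-|E|=0$, so the net exponent contributed by $G_i$ is $1-v_i+e_i$ for a path and $-v_i+e_i$ for a cycle.
\item Because $F_i$ is a subgraph of a path or cycle, it is a forest unless $F_i=G_i$ is a whole cycle; hence $e_i\le v_i-1$ in the forest case.
\end{itemize}
Every non-first graph therefore contributes exponent at most $0$ if it is a path and at most $-1$ if it is a cycle, except that a cycle equal to a previous cycle is excluded by distinctness. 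The first graph contributes $1$ if it is a path and $0$ if it is a cycle. The total exponent of the component is thus at most $1$ when it starts with a path and has no cycles, and at most $0$ otherwise. When cycles appear, I would order the graphs so that a cycle comes first, which gives $a\ge1\Rightarrow$ exponent $\le 1-a\le 0$.

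The comparison with $n^{b/2}$ then runs case by case:
\begin{itemize}
\item $b\ge3$: exponent $\le1<b/2$.
\item $b=2$ with $a\ge1$: exponent $\le0<1$.
\item $b=1$: this forces $a\ge1$ for the component to be connected and non-isolated, so exponent $\le0<1/2$.
\item $b=0$ with $a\ge2$: exponent $\le -1<0$.
\end{itemize}
This leaves exactly the two permitted shapes.

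The main obstacle is the $b\ge3$ bookkeeping. The bound above gives exponent $1$, and it needs to be at least strictly below $b/2$ and hold uniformly. I expect $b=3$ to be fine since $1<3/2$, but I would double-check the terms in which the expansion over $S$ leaves some paths with only their expectation factor. Those paths are counted in $n^{|V|}$ yet contribute probability only through $\E$, and this is handled because their overlap with the rest is still nonempty, so the count loses $n^{v_i}$. The planted-measure version $T_m^*$ follows the same way, since by the computation in Lemma \ref{prob-cyc-pl} planted probabilities of fixed bounded subgraphs are within constant factors of the unplanted ones.
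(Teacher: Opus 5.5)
Your proof is correct, but it is organized differently from the paper's. The paper does not factor over components of $L$. It fixes one global order, with all cycles first and then the paths by index, and bounds $|T|\le 2^{r_1}\E[\prod_i\mathbf{1}_{G_i}]$ via FKG. It then proves the single inequality $\sum_i\bigl(v(F_i)-e(F_i)\bigr)>r_1/2$ by splitting $L\notin\mathcal{L}_0$ into three categories (an isolated path; $L'$ not a perfect matching or a path touching a cycle; two cycles touching) and counting components and isolated points of $L'$. You instead use edge-independence to factor $T$ over the components of $L$. Inside each component you explore in an order where every new graph meets the previous union. This gives uniform per-component exponents: $\le 1$ for path-only components, $\le 0$ once a cycle is present, and $\le -1$ when $b=0$, $a\ge2$. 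The comparison with $b/2$ then reduces to your four-line check. Your route is cleaner, and it sidesteps a subtlety of the paper's fixed index order: a path that is not first in its $L'$-component can still have $F_i=\varnothing$ unless the paths are relabeled. The paper's route avoids the factorization step. Note that factorization, like the vanishing of $T$ for isolated paths, uses edge independence, so it does not carry over verbatim to $\PR_m^*$, despite your closing remark.

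Two steps in your write-up need repair; neither is fatal.

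\textbf{The per-cycle bound.} It is not true that every non-first cycle contributes $\le -1$. Distinctness only stops a cycle from lying inside a single earlier cycle. A later cycle can be covered by the union of several earlier graphs, for example two earlier paths, or the third cycle of a theta configuration. Then $F_i=G_i$ and that cycle contributes $0$, so your bound $1-a$ is false in general. What you actually use does survive. With a cycle placed first, every later graph contributes $\le 0$. For a cycles-only component with $a\ge2$, the second cycle in your order meets only the first, so $F_2=G_2\cap G_1\neq G_2$ is a nonempty forest and supplies the needed $-1$.

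\textbf{The expansion over $S$.} The cycles must always be included in the union. The paths that carry only an expectation factor are not handled by ``the count loses $n^{v_i}$'', because dropping them from the union changes the overlaps of later graphs. Instead, note that $e\bigl(\bigcup_{\text{cycles}\cup S}G_j\bigr)+\ell\,|S^c|\ge e\bigl(\bigcup_{\text{all}}G_j\bigr)$, or apply Harris/FKG as the paper does. Either way every term is bounded, up to constants, by $2^b\,\PR\bigl(\bigcup_{\text{all}}G_j\subseteq G\bigr)$, and your count then applies unchanged.
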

\begin{proof}[Proof of Lemma \ref{L-notin-L0}]
We bound $T(G_1,\dots,G_r)$ as follows:
\begin{align}\label{T-bound}
\nonumber|T(G_1,\dots,G_r)|&\le\E\left[\PL{i=1}^{r-r_1}\mathbf{1}_{G_i}\cdot\PL{i=r-r_1+1}^{r}|\mathbf{1}_{G_i}-\E(\mathbf{1}_{G_i})|\right]
\\\nonumber&\le\E\left[\PL{i=1}^{r-r_1}\mathbf{1}_{G_i}\cdot\PL{i=r-r_1+1}^{r}(\mathbf{1}_{G_i}+\E(\mathbf{1}_{G_i}))\right]
\\&\le2^{r_1}\cdot\E\left[\PL{i=1}^{r-r_1}\mathbf{1}_{G_i}\cdot\PL{i=r-r_1+1}^{r}\mathbf{1}_{G_i}\right],
\end{align}
where the last inequality holds because of the FKG inequality.\\\\
If $L\notin\mathcal{L}_0$, then it falls into one of the following categories:
\begin{enumerate}
\item 
At least one of the $G_{r-r_1+1},\dots,G_r$ is an isolated vertex in $L$.
\item 
None of the vertices $G_{r-r_1+1},\dots,G_r$ is isolated and: $L'$ is not a perfect matching or at least one of them is connected to one of the vertices $G_1,\dots,G_{r-r_1}$.
\item 
$L'$ is a perfect matching, its vertices are isolated from the rest of $L$ and there is at least one edge between the vertices $G_1,\dots,G_{r-r_1}$.
\end{enumerate}
We show (\ref{L-neq-L0}) in each one of these cases. If $L$ falls into category 1, $T(G_1,\dots,G_r)=0$, because one of the factors is independent of all the others and has expectation 0. Assume that $L$ is in category 2 or 3, and the $(F_i)_i$ are specified. Let $t=\ell r_1+\SL{i=3}^sir_i$ be the total number of vertices that must be chosen in $K_n$. The number of choices of the $G_1,\dots,G_r$ is $O(n^{t-\sum_{i}v(F_i)})$, and the probability for $G_1,\dots,G_r$ to be present in each of these choices is $O(n^{-t+r_1+\sum_ie(F_i)})$. Therefore, because of (\ref{T-bound}), it suffices to show that in cases 2 and 3 above,
\begin{equation}\label{v-e-Fi}
r_1+\SL{i}(e(F_i)-v(F_i))<r_1/2\ \Leftrightarrow\SL{i}(v(F_i)-e(F_i))>r_1/2.
\end{equation}
First, observe that every $F_i$, as a subgraph of $G_i$, has $v(F_i)\ge e(F_i)$, and if $i>r-r_1$ and $F_i\neq\varnothing$, $F_i$ is a forest, so $v(F_i)\ge e(F_i)+1$.\\\\
If $L$ falls into category 3, observe that $$\SL{i=r-r_1+1}^r(v(F_i)-e(F_i))=r_1/2.$$
Let $j\in[r-r_1]$ be the smallest $j$ for which $F_j\neq\varnothing$. We claim that $F_j$ is a forest. Indeed, since the $(G_i)_{i=1}^{r-r_1}$ are pairwise distinct, $F_j=G_j$ can only happen if $G_j$ connects to two $G_{i_1},G_{i_2}$ with $i_1<i_2<j$ and $V(G_{i_1})\cap V(G_{i_2})\neq\varnothing$, contradicting the minimality of $j$. Since $F_j$ is a non-empty forest, $v(F_j)>e(F_j)$, concluding the proof of (\ref{v-e-Fi}).\\\\
Suppose $L$ falls into category 2. We will show that
$$\SL{i=r-r_1+1}^r(v(F_i)-e(F_i))>r_1/2.$$
Let $S$ be the set of $i\in[r-r_1+1,r]$ such that $F_i\neq\varnothing$, $c(L')$ be the number of connected components of $L'$ and $n(L')$ be the number of isolated points of $L'$. Any $i$ for which $G_i$ is an isolated point of $L'$ is in $S$, as the $G_i$ must have a neighbor in the $G_1,\dots,G_{r-r_1}$. Also, in the rest $c(L')-n(L')$ connected components that contain at least two vertices, at most one vertex per connected component is not in $S$. We observe that $2c(L')-n(L')\le r_1$, therefore
$$\SL{i=r-r_1+1}^r(v(F_i)-e(F_i))\ge|S|\ge n(L')+r_1-(c(L')-n(L'))\ge r_1/2+3n(L')/2.$$
If $n(L')\ge1$, then we have proven the desired result. Otherwise, $n(L')=0$ and $c(L')=r_1/2$, so $L'$ is a perfect matching. Due to the fact that $L$ is in category 2, there exists a $j\in S$ such that $G_j$ is connected to one of the $G_1,G_2,\dots,G_{r-r_1}$. Pick the smallest such $j$. Assume $G_j$ is connected to $G_i$ for some $1\le i \le r-r_1$ and to $G_{j'}$, for some $j'>r-r_1$. If $j>j'$, then $F_j$ is a forest with at least two connected components, one formed because of the intersection with $G_i$ and one because of the intersection with $G_{j'}$ (due to the minimality of $j'$, $G_j$ does not connect to any vertex in $(G_i)_{i=1}^{r-r_1}$. Therefore, $v(F_j)-e(F_j)\ge2$ and now (\ref{v-e-Fi}) is proven. On the other hand, if $j<j'$, then $G_j$ is the vertex appearing first in its connected component in $L'$ and is still in $S$. Therefore, $|S|\ge1+c(L')>r_1/2$ and (\ref{v-e-Fi}) follows in this case as well.\\\\
Since there are only $O(1)$ number of choices for $L$ and $(F_i)_i$, (\ref{L-neq-L0}) has been proven.
\end{proof}
\begin{lem}\label{L-in-L0}
For the class $\mathcal{L}_0$ we have $|\mathcal{L}_0|=(r_1-1)!!$ and for any $L_0\in\mathcal{L}_0$,
\begin{equation}\label{L-eq-L0}
\SL{\substack{G_1,\dots,G_r\\ L=L_0}}T(G_1,\dots,G_r)=(1+o(1))\cdot\text{Var}(X_{\ell,n})^{r_1/2}\cdot\PL{i=3}^s\tilde{\lambda}_i^{r_i}.
\end{equation}
Under the planted measure, the relation is the same except the $\tilde{\lambda_i}$ become $\tilde{\mu}_i$.
\end{lem}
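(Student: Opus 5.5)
The count $|\mathcal{L}_0|=(r_1-1)!!$ is immediate. A graph $L\in\mathcal{L}_0$ is determined by a perfect matching of the $r_1$ path-vertices $G_{r-r_1+1},\dots,G_r$, while the cycle-vertices are isolated and so impose no further choice. For the asymptotics, I would fix $L_0\in\mathcal{L}_0$ with matched pairs $\{a_1,b_1\},\dots,\{a_{r_1/2},b_{r_1/2}\}$. The first step is to decompose $T(G_1,\dots,G_r)$ exactly. The blocks of $L_0$ (each cycle alone, each matched pair of paths) are pairwise vertex-disjoint, so they are edge-disjoint, and in $G(n,d/n)$ the corresponding indicator products are independent. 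Hence
$$T(G_1,\dots,G_r)=\PL{i=1}^{r-r_1}\PR(G_i\in G)\cdot\PL{j=1}^{r_1/2}\text{Cov}(\mathbf{1}_{G_{a_j}},\mathbf{1}_{G_{b_j}}).$$

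The second step is to remove the disjointness constraint. Summing this product over all tuples compatible with $L_0$ is the same as summing over all tuples where each cycle $G_i$ is any cycle of its prescribed length (distinct within each length class), each pair $(G_{a_j},G_{b_j})$ is any pair of paths sharing a vertex, and all blocks are vertex-disjoint. I would first drop the cross-block disjointness and bound the error. The full unconstrained sum factorizes as
$$\PL{i=3}^s[\E Y_{i,n}]_{r_i}\text{-type terms}\cdot\Bigl(\SL{P\cap Q\neq\varnothing}\text{Cov}(\mathbf{1}_P,\mathbf{1}_Q)\Bigr)^{r_1/2}.$$
Here $\SL{P\cap Q\neq\varnothing}\text{Cov}(\mathbf{1}_P,\mathbf{1}_Q)=\text{Var}(X_{\ell,n})$, because pairs sharing a vertex but no edge have zero covariance. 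The distinct cycles of length $i$ contribute $|\{C\}|^{\underline{r_i}}(d/n)^{ir_i}\to\tilde{\lambda}_i^{r_i}$. The tuples in which two blocks share a vertex must be subtracted. By the same counting as in Lemma \ref{L-notin-L0}, each such overlap costs a factor $n^{-1}$ in the vertex count, and the probabilities are bounded via (\ref{T-bound}). The covariance magnitudes are at most $\PR(P,Q\in G)$, so the error is $O(n^{r_1/2-1})=o(\text{Var}(X_{\ell,n})^{r_1/2})$ by Lemma \ref{exp-var-path-cyc}.

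The third step handles the planted measure, where independence across blocks fails. I expect this to be the main obstacle. Under $\PR_m^*$ one has $T_m^*=\E(\tilde Z_m)^{-1}\sum_\sigma\E(\tilde Z_\sigma\prod\cdots)$. Conditional on $\sigma$, the edges are still independent, but the edge probabilities depend on $\sigma$. I would follow the computations in Lemmas \ref{exp-var-path-cyc} and \ref{prob-cyc-pl}. For disjoint blocks spanning $O(1)$ vertices, summing over $\sigma$ with $m(\sigma)=m$ makes the spins on these vertices asymptotically i.i.d. with bias $m/n=O(n^{-1/4})$. The planted expectation therefore factorizes up to a $1+o(1)$ factor into products over blocks, and these match: $\PR_m^*(C\in G)\sim(d/n)^i(1+d^{-i})$, which gives $\tilde\mu_i$; and the paired-path covariances sum to $\text{Var}_m^*(X_{\ell,n})\sim\text{Var}(X_{\ell,n})$.

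The difficulty is that the centering uses $\E_m^*(\mathbf{1}_{G_i})$, and the product of centered terms can have cancellations that are sensitive to tiny correlations. Each matched pair covariance is of order $\PR(P\cup Q)$, while cross-block spin correlations are only $O(m^2/n^2)=O(n^{-1/2})$ relative. To make this rigorous, I would expand each centered factor and write the joint planted law of the block indicators as an average over the spins of the $O(1)$ involved vertices. Given those spins, the blocks are independent. So $T_m^*$ equals a spin-average of products of conditional block covariances, plus terms involving conditional means minus planted means, which are $O(m/n)$ relative. The conditional covariances are asymptotically spin-independent up to $1+o(1)$. Each single unmatched centered factor is killed up to relative $O(n^{-1/4})$, which is enough since only the leading order is needed.
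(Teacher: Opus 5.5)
Your argument is essentially the paper's. Both rest on the exact factorization \eqref{T-eq} of $T$ over the vertex-disjoint blocks of $L_0$, on identifying the matched-pair covariance sums with $\text{Var}(X_{\ell,n})^{r_1/2}$ and the cycle sums with $\prod_i\tilde{\lambda}_i^{r_i}$, and on an $O(n^{-1})$ relative loss per extra vertex coincidence between blocks; only the bookkeeping differs (the paper fixes the cycles, expands $\text{Var}(X_{\ell,n-w})^{r_1/2}$ over path tuples avoiding $W$ and discards $L\neq L_0$ via Lemma \ref{L-notin-L0}, while you drop all cross-block disjointness at once and subtract). For the planted measure the paper simply swaps in Lemma \ref{prob-cyc-pl} and tacitly keeps the block factorization, so your conditioning on $\sigma$ is more careful than the source; note only that the spin-independence of the conditional covariances and the $O(m/n)$ smallness of the centered conditional path probabilities (which matter for pairs sharing a vertex but no edge, whose planted covariance is no longer zero) hold after averaging over the nearly i.i.d.\ spins of the $O(1)$ block vertices, not pointwise in $\sigma$, which is exactly what your near-i.i.d.\ argument supplies.
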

\begin{proof}[Proof of Lemma \ref{L-in-L0}] The fact that $|\mathcal{L}_0|=(r_1-1)!!$ is easy to see, as there are exactly $(r_1-1)!!$ matchings of $r_1$ vertices. If $L=L_0$, then
\begin{equation}\label{T-eq}
T(G_1,\dots,G_r)=\PL{j=1}^{r-r_1}\PR(G_j\in G)\cdot\PL{j=r-r_1+1}^r\sqrt{\text{Cov}(\mathbf{1}_{G_j},\mathbf{1}_{G_{v(j)}})},
\end{equation}
where for any $j$, $v(j)$ is the unique vertex in $L_0$ for which $G_j\cap G_{v(j)}\neq\varnothing$. Fix $G_1,\dots,G_{r-r_1}$ which are pairwise disjoint and let $W=V(G_1\cup\dots\cup G_{r-r_1}), w=\SL{i=3}^sir_i$ be the set and the total number of vertices used by one of the $G_1,\dots,G_{r-r_1}$. We calculate: 
\begin{align}\label{Var-calc}
\nonumber\text{Var}(X_{\ell,n})^{r_1/2}\sim\text{Var}(X_{\ell,n-w})^{r_1/2}=&\SL{\substack{G_{r-r_1+1},\dots,G_r\\ G_{r-r_1+j}\cap W=\varnothing \\ G_{r-r_1+2j-1}\cap G_{r-r_1+2j}\neq\varnothing\ \forall j}}\PL{j=1}^{r_1/2}\text{Cov}(\mathbf{1}_{G_{r-r_1+2j-1}},\mathbf{1}_{G_{r-r_1+2j}})
\\\nonumber=&\SL{\substack{G_{r-r_1+1},\dots,G_r\\ G_j\cap W=\varnothing \\ G_j\cap G_{v(j)}\neq\varnothing\ \forall j}}\ \PL{j=r-r_1+1}^{r}\sqrt{\text{Cov}(\mathbf{1}_{G_{j}},\mathbf{1}_{G_{v(j)}})}
\\\nonumber=&\SL{\substack{G_{r-r_1+1},\dots,G_r \\L=L_0}}\ \PL{j=r-r_1+1}^r\sqrt{\text{Cov}(\mathbf{1}_{G_j},\mathbf{1}_{G_{v(j)}})}\\&+\SL{\substack{G_{r-r_1+1},\dots,G_r \\G_j\cap G_{v(j)}\neq\varnothing\ \forall j\\ L\neq L_0}}\ \PL{j=r-r_1+1}^r\sqrt{\text{Cov}(\mathbf{1}_{G_j},\mathbf{1}_{G_{v(j)}})}.
\end{align}
When $G_j\cap G_{v(j)}\neq\varnothing$ for all $j$ and $L\neq L_0$, we know that $L\notin\mathcal{L}_0$. This case was handled in Lemma \ref{L-notin-L0}, in the special case $r_3=\cdots=r_s=0$: At first, observe that
$$\left|\PL{j=r-r_1+1}^r\sqrt{\text{Cov}(\mathbf{1}_{G_j},\mathbf{1}_{G_{v(j)}})}\right|\le2^{r_1}\cdot\E\left[\PL{j=r-r_1+1}^r\mathbf{1}_{G_j}\right]$$
and then we proceed as we did once we proved (\ref{T-bound}) to prove that
$$\SL{\substack{G_{r-r_1+1},\dots,G_r \\G_j\cap G_{v(j)}\neq\varnothing\ \forall j\\ L\neq L_0}}\ \PL{j=r-r_1+1}^r\sqrt{\text{Cov}(\mathbf{1}_{G_j},\mathbf{1}_{G_{v(j)}})}=o(n^{r_1/2})=o(\text{Var}(X_{\ell,n})^{r_1/2}).$$
Combining this with (\ref{Var-calc}) it follows that for any $G_1,\dots,G_{r-r_1}$,
\begin{align*}
\SL{\substack{G_{r-r_1+1},\dots,G_r\\ L=L_0}}T(G_1,\dots,G_r)&\sim\text{Var}(X_{\ell,n})^{r_1/2}\cdot\PL{j=1}^{r-r_1}\PR(G_j\in G)\\&\sim\text{Var}(X_{\ell,n})^{r_1/2}\cdot\left(\dfrac{d}{n}\right)^w
\end{align*}
Because of Lemma \ref{prob-cyc-pl}, under the planted measure, we may write
$$\SL{\substack{G_{r-r_1+1},\dots,G_r\\ L=L_0}}T_m^*(G_1,\dots,G_r)\sim\text{Var}(X_{\ell,n})^{r_1/2}\left(\dfrac{d}{n}\right)^w\PL{i=3}^s(1+d^{-i})^{r_i}.$$
Summing over all possible choices of $G_1,\dots,G_{r-r_1}$ implies (\ref{L-eq-L0}), as for each cycle of length $i$, the number of ways to choose it in the complete graph is $\sim\frac{n^i}{2i}$.
\end{proof}
Using Lemmas \ref{L-in-L0} and \ref{L-notin-L0}, equation (\ref{mom-ER}), and therefore, Proposition \ref{cyc-path-ER}, follows.
\end{proof}

\end{document}